\newcommand{\complex}{\mathbb{C}}
\newcommand{\ganz}{\mathbb{Z}}
\newcommand{\q}{\mathbb Q}
\newcommand{\diagram}[1]{\begin{displaymath}
\xymatrix{#1}
\end{displaymath}}
\newcommand{\diagramnr}[2]{\begin{equation}
\label{#1}
\xymatrix{#2} 
\end{equation}}
\newcommand{\real}{\mathbb R}
\newcommand{\eq}[1]{\begin{align}#1\end{align}}
\newcommand{\eqst}[1]{\begin{align*}#1\end{align*}}
\newcommand{\okhat}{\widehat{\mathcal O }_K}
\newcommand{\olhat}{\widehat{\mathcal O }_L}
\newcommand{\kab}{Gal(K^{ab}/K)}
\newcommand{\iok}{ I_K}
\newcommand{\ok}{\mathcal O _K}
\newcommand{\iol}{ I_L}
\newcommand{\ol}{\mathcal O _L}
\newcommand{\lab}{Gal(L^{ab}/L)}
\theoremstyle{plain}
\newtheorem{theorem}{Theorem}[section]
\newtheorem{remark}[theorem]{Remark}
\newtheorem{definition}[theorem]{Definition}
\newtheorem{proposition}[theorem]{Proposition}
\newtheorem{lemma}[theorem]{Lemma}
\newtheorem{corollary}[theorem]{Corollary}
\newcommand\bp{\begin{proof}}
\newcommand\ep{\end{proof}}
\newcommand\aaa{\mathfrak a}
\newcommand\pp{\mathfrak p}
\newcommand{\N}{\mathbb N}
\newcommand{\C}{\mathbb C}
\newcommand\ohs{{\widehat{\OO}^\times}}
\newcommand\OO{{\mathcal O}}
\numberwithin{equation}{section}
\title{On arithmetic models and functoriality of Bost-Connes systems. With an appendix by Sergey Neshveyev}
\date{\today}
\begin{document}

\author[B. Yalkinoglu]{Bora Yalkinoglu}
\address{Max-Planck Institute for Mathematics, Bonn}
\email{boray@mpim-bonn.mpg.de}
\thanks{This work has been supported by the Marie Curie Research Training Network MRTN-CT-2006-031962 in Noncommutative Geometry, EU-NCG}
\subjclass[2010]{11R37, 11R20, 11M55, 58B34, 46L55} 
\keywords{Arithmetic subalgebras, Bost-Connes systems, Endomotives, $\Lambda$-rings, Deligne-Ribet monoid, functoriality}

\begin{abstract} 
This paper has two parts. In the first part we construct arithmetic models of Bost-Connes systems for arbitrary number fields, which has been an open problem since the seminal work of Bost and Connes \cite{bc}. In particular our construction shows how the class field theory of an arbitrary number field can be realized through the dynamics of a certain operator algebra. This is achieved by working in the framework of Endomotives, introduced by Connes, Marcolli and Consani \cite{CCMend}, and using a classification result of Borger and de Smit \cite{BdS} for certain $\Lambda$-rings in terms of the Deligne-Ribet monoid. Moreover the uniqueness of the arithmetic model is shown by Sergey Neshveyev in an appendix. In the second part of the paper we introduce a base-change functor for a class of algebraic endomotives and construct in this way an algebraic refinement of a functor from the category of number fields to the category of Bost-Connes systems, constructed recently by Laca, Neshveyev and Trifkovic \cite{LNT}.
\end{abstract}

\maketitle

\setcounter{tocdepth}{1}

\section*{Introduction}
\noindent
In this paper we show the existence of arithmetic models of Bost-Connes systems for arbitrary number fields, which was an open problem going back to the work of Bost and Connes \cite{bc}. We also introduce an algebraic refinement of a functor from the category of number fields to the category of Bost-Connes systems constructed recently by Laca, Neshveyev and Trifkovic \cite{LNT}. \\ \\
For every number field $K$ there is a $C^*$-dynamical system (cf., section \ref{bctypesystems}) \eqst{\mathcal A _K = (A_K,\sigma_t)} called the \textbf{Bost-Connes system} or \textbf{BC-system} of $K$. Such a system was first constructed in the case of the rational field in the seminal paper \cite{bc}, and later for arbitrary number fields by Ha and Paugam \cite{HaPa}.
Among the most interesting properties of BC-systems are the following four properties. \bigskip
\begin{itemize}
\item[(i)] The partition function of $\mathcal A$ is given by the Dedekind zeta function of $K$. \smallskip
\item[(ii)] The maximal abelian Galois group $Gal(K^{ab}/K)$ of $K$ acts as symmetries on $\mathcal A$. \smallskip
\item[(iii)] For each inverse temperature $0 < \beta \leq 1$ there is a unique $KMS_\beta$-state. \smallskip
\item[(iv)] For each $\beta > 1$ the action of the symmetry group $Gal(K^{ab}/K)$ on the set of extremal $KMS_\beta$-states is free and transitive. \bigskip
\end{itemize}
In \cite{HaPa} property (i) and (ii) were shown to hold for all BC-systems $\mathcal A _K$. The difficult problem of classifying the $KMS_\beta$-states of BC-systems was solved by Laca, Larsen and Neshveyev \cite{LLN} by building upon earlier work of \cite{bc}, Connes and Marcolli \cite{ConMarGL2}, Laca \cite{Laca1} and Neshveyev \cite{Serg1}, thus proving property (iii) and (iv) for all $\mathcal A _K$. From an arithmetic view point the most interesting property of BC-systems is the existence of arithmetic models. A BC-system $\mathcal A _K$ has an \textbf{arithmetic model} if there exists a $K$-rational subalgebra $A_K^{arith}$ of $A_K$, called an \textbf{arithmetic subalgebra}, such that the following additional three properties are satisfied (see, e.g., \cite{cmr}) \bigskip
\begin{itemize}
\item[(v)] For every extremal $KMS_\infty$-state $\varrho$ and every $f \in A_K^{arith}$, we have \eqst{\varrho(f) \in K^{ab}} and further $K^{ab}$ is generated over $K$ by these values.
\item[(vi)] If we denote by ${}^{\nu}\hspace{-0.6mm}\varrho$ the action of a symmetry $\nu \in \kab$ on an extremal $KMS_\infty$-state $\varrho$ (given by pull-back), we have for every element $f \in A_K^{arith}$ the following compatibility relation \eqst{{}^{\nu}\hspace{-0.6mm}\varrho(f)  = \nu^{-1}(\varrho(f))\,.} 
\item[(vii)] The $\complex$-algebra $A_K^{arith}$$\otimes_K$$\complex$ is dense in $A_K$. \bigskip
\end{itemize}
The existence of an algebraic model of $\mathcal A _\q$ was already shown in \cite{bc}. Ten years later
Connes, Marcolli and Ramachandran \cite{cmr} constructed in a beautiful way arithmetic models of $\mathcal A _K$ in the case of $K$ imaginary quadratic by drawing the connection to the theory of Complex Multiplication on the modular curve by using the $GL_2$-system of \cite{ConMarGL2}. A first approach towards the construction of (partial) arithmetic models of more general BC-systems $\mathcal A_K$ was undertaken in \cite{Y1} where the theory of Complex Multiplication on general Siegel modular varieties and the $GSp_{2n}$-systems of \cite{HaPa} were used to construct partial arithmetic models of $\mathcal A _K$ in the case of $K$ containing a CM field. This approach exhausted at the same time the full power of the existing explicit class field theory (which is only known for $K=\q$ or imaginary quadratic, and partially for $K$ a CM field). \\ \\
The natural question whether all BC-systems $\mathcal A _K$ posses an algebraic model proved to be resistant at first. In the case of the classical BC-system it was shown by Marcolli \cite{matcycl} that $\mathcal A _\q$ can be described in the context of endomotives, introduced by Connes, Consani and Marcolli \cite{CCMend}, and the theory of $\Lambda$-rings, i.e., rings with a commuting family of Frobenius lifts as extra structure. We will show in our work that this approach is in fact the correct one for the general case. An elegant classification result of Borger and de Smit \cite{BdS} of certain $\Lambda$-rings in terms of the Deligne-Ribet monoid paves the way for the case of arbitrary number fields. \\ \\
More precisely, for every number field $K$ the results of \cite{BdS} allow us to construct an algebraic endomotive (cf., \ref{defalgek}) \eqst{\mathcal E _K = E_K \rtimes I_K} over $K$, where the $K$-algebra $E_K$ is a direct limit $\varinjlim E_\mathfrak f$ of finite, \'etale $K$-algebras $E_\mathfrak f$ which come from a refined Grothendieck-Galois correspondence in terms of the Deligne-Ribet monoid $\mathrm{DR}_K$ (see Corollary \ref{ilggc}). The monoid of (non-zero) integral ideals $\iok$ of $K$ is acting by Frobenius lifts on $E_K$. \\
In general there is a functorial way of attaching to an algebraic endomotive $\mathcal E$ a $C^*$-algebra $\mathcal E ^{an}$, containing $\mathcal E$, which is called the analytic endomotive of $\mathcal E$. Moreover, in good situations $\mathcal E$ determines naturally a time evolution $\sigma : \real \to Aut(\mathcal E^{an})$ on $\mathcal E ^{an}$ by means of Tomita-Takesaki theory, so that we end up with a $C^*$-dynamical system \eqst{\mathcal E ^{mean} = (\mathcal E ^{an},\sigma_t)\, ,} depending only on $\mathcal E$ called the measured analytic endomotive of $\mathcal E$ (cf., section \ref{endomotives}). Our first main result will be \smallskip
\begin{theorem}
\label{ThmA}
For every number field $K$ the measured analytic endomotive $\mathcal E _K^{mean}$ of the algebraic endomotive $\mathcal E _K$ exists and is in fact naturally isomorphic to the BC-system $\mathcal A _K$. \smallskip 
\end{theorem}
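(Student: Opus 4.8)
The plan is to make both sides of the claimed isomorphism fully explicit and match them layer by layer: the underlying profinite space, the two commuting monoid actions on it, the $C^*$-completion, and finally the dynamics — with the identification of the profinite space being the crux. First I would unwind the definition of the algebraic endomotive $\mathcal{E}_K = E_K \rtimes I_K$ from \ref{defalgek}. Writing $E_K = \varinjlim_{\mathfrak f} E_{\mathfrak f}$ with each $E_{\mathfrak f}$ finite \'etale over $K$, form the associated profinite $Gal(\bar K/K)$-space
\[
X_K := \varprojlim_{\mathfrak f}\; \mathrm{Hom}_{K\text{-alg}}(E_{\mathfrak f}, \bar K),
\]
so that $E_K \otimes_K \bar K$ is the algebra of locally constant $\bar K$-valued functions on $X_K$. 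The essential point is to feed in the refined Grothendieck--Galois correspondence of Corollary \ref{ilggc} and the Borger--de Smit classification \cite{BdS} to identify $X_K$ --- as a compact space carrying a continuous $Gal(K^{ab}/K)$-action together with a commuting action of $I_K$ by the dualized Frobenius lifts --- with the Deligne--Ribet monoid $\mathrm{DR}_K$, and hence with the balanced product $\widehat{\mathcal{O}}_K \times_{\widehat{\mathcal{O}}_K^{\times}} Gal(K^{ab}/K)$ underlying the Ha--Paugam model \cite{HaPa} of $\mathcal{A}_K$; under this identification the Frobenius lift attached to an integral ideal $\mathfrak a$ corresponds to the partial map implementing multiplication by $\mathfrak a$.

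Granting this, the analytic endomotive is read off from the general procedure of section \ref{endomotives}: $\mathcal{E}_K^{an}$ is the semigroup crossed product $C(X_K) \rtimes I_K$, and the identification above upgrades to a $C^*$-algebra isomorphism $\mathcal{E}_K^{an} \cong A_K$ carrying the generating isometries of the BC-crossed product to the analytified Frobenius lifts. Next I would check the dynamics. The measured analytic endomotive equips $X_K$ with its canonical measure $\mu_K$, the projective limit of the normalized counting measures on the finite sets $\mathrm{Hom}_{K\text{-alg}}(E_{\mathfrak f}, \bar K)$; one verifies that $\mu_K$ is quasi-invariant under $I_K$ with Radon--Nikodym cocycle $\mathfrak a \mapsto N(\mathfrak a)^{-1}$. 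Tomita--Takesaki theory applied to the associated state then produces a modular automorphism group that acts trivially on $C(X_K)$ and scales the isometry attached to $\mathfrak a$ by $N(\mathfrak a)^{it}$ --- exactly the BC time evolution. This gives $\mathcal{E}_K^{mean} = (\mathcal{E}_K^{an}, \sigma_t) \cong (A_K, \sigma_t) = \mathcal{A}_K$. Naturality then follows by tracking this isomorphism through both constructions: the $Gal(K^{ab}/K)$-symmetries of $\mathcal{E}_K^{an}$ coming from the Galois action on $X_K$ are intertwined with the symmetries of $\mathcal{A}_K$ of \cite{HaPa}, and the construction is compatible with the known functoriality of the Ha--Paugam picture.

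I expect the main obstacle to be that first step --- converting the $\Lambda$-ring / Deligne--Ribet description of $E_K$ into the adelic geometry of the BC-space while keeping exact track of the $Gal(K^{ab}/K)$-action and of the compatibility between the Frobenius-lift action of $I_K$ on $E_K$ and the multiplicative semigroup action defining $A_K$. A secondary but genuine point is verifying that the canonical measure has the correct Radon--Nikodym cocycle, since this is what the word ``exists'' in the statement is really about: without it, Tomita--Takesaki would still produce a $C^*$-dynamical system, but not necessarily the one matching $\mathcal{A}_K$. Once the space, the measure, and the two commuting actions are correctly matched, the comparison of the time evolutions is essentially forced.
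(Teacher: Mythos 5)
Your overall architecture is the same as the paper's: identify the profinite space $\mathrm{Hom}_{K\text{-alg}}(E_K,\overline K)\cong\mathrm{DR}_K$ equivariantly with $Y_K$, deduce $\mathcal E_K^{an}\cong A_K$, then match the canonical measure with the one driving the BC time evolution. The second half of your plan is essentially the paper's Step Two: the paper proves uniformity by showing all fibres of $DR_{\mathfrak g}\to DR_{\mathfrak f}$ have cardinality $N_{K/\q}(\mathfrak g/\mathfrak f)$, and then verifies that $\mu_K=\varprojlim\mu_{\mathfrak f}$ has total mass $1$ and satisfies the scaling condition $\mu(gZ)=N_{K/\q}(g)^{-1}\mu(Z)$ — which is exactly your Radon--Nikodym cocycle — so that $\mu_K$ coincides with the measure $\mu_1$ of the unique $KMS_1$-state classified in \cite{LLN}, forcing the modular flow to be the BC evolution. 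That part of your proposal is fine, modulo writing out the fibre-counting that makes ``the projective limit of the normalized counting measures'' exist at all.

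The genuine gap is in your first step. The Borger--de Smit correspondence gives you $\mathrm{Hom}_{K\text{-alg}}(E_{\mathfrak f},\overline K)\cong DR_{\mathfrak f}$ essentially by construction of $E_{\mathfrak f}$; it does \emph{not} give you the identification of $\mathrm{DR}_K$ with the balanced product $\widehat{\mathcal O}_K\times_{\widehat{\mathcal O}_K^\times}Gal(K^{ab}/K)$ — your ``and hence'' conceals precisely the content that the paper has to supply (Proposition \ref{keyprop}, flagged in the introduction as one of the two key observations). Concretely, one needs the Deligne--Ribet map $\iota:\widehat{\mathcal O}_K\to\mathrm{DR}_K$ (sending a residue $m_{\mathfrak f}$ to the ideal generated by a totally positive lift), the map $\Psi([\rho,\alpha])=\iota(\rho)\alpha^{-1}$, well-definedness via $\iota(s)=[s]^{-1}$ for $s\in\widehat{\mathcal O}_K^\times$, and then a finite-level bijectivity argument: injectivity of $\Psi_{\mathfrak f}$ rests on a monoid isomorphism $\mathcal O_K/\mathfrak f\cong\coprod_{\mathfrak d\mid\mathfrak f}(\mathcal O_K/(\mathfrak f/\mathfrak d))^\times$ compatible with $\iota_{\mathfrak f}$ and the exact sequences for ray class groups, while surjectivity uses the decomposition $DR_{\mathfrak f}\cong\coprod_{\mathfrak d\mid\mathfrak f}C_{\mathfrak f/\mathfrak d}$ and the relation $\mathfrak a\sim_{\mathfrak f}\mathfrak b\Leftrightarrow\mathfrak d\mathfrak a\sim_{\mathfrak d\mathfrak f}\mathfrak d\mathfrak b$ (and is subtler than it looks, since $C_{\mathfrak f}$ does not act transitively on the image of $(\mathcal O_K/\mathfrak d)^\times$). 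Equivariance under $I_K$ is also not formal: it needs the Deligne--Ribet identity $\iota(\rho)=(\rho)[\rho]^{-1}$ for $\rho\in\widehat{\mathcal O}_K^{\natural}$. You correctly name this identification as the main obstacle, but naming it is where your argument stops, and without it nothing downstream (the crossed-product isomorphism, the comparison of measures on $Y_K$ versus $\mathrm{DR}_K$) can be carried out.
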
 \noindent
The key observations for proving this theorem are proposition \ref{keyprop}, which shows that the Deligne-Ribet monoid $\mathrm{DR}_K$ is naturally isomorphic to $\okhat \times_{\okhat ^\times} \kab$, and proposition \ref{equalmeasures}, which shows that the time evolutions of both systems agree. \\ \\
The most important result of our paper is to show that all $\mathcal A _K$ posses an arithmetic model. \smallskip
\begin{theorem}
\label{ThmB} For all number fields $K$ the BC-systems $\mathcal A _K$ (resp. $\mathcal E _K^{mean}$) posses an arithmetic model with arithmetic subalgebra given by the algebraic endomotive $\mathcal E _K = E_K \rtimes I_K$. \smallskip
\end{theorem}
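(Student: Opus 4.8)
The plan is to exhibit the image of the algebraic endomotive $\mathcal E_K = E_K \rtimes I_K$ inside $A_K$ (via a fixed embedding $\overline{K}\hookrightarrow\complex$) as the desired $K$-rational subalgebra $A_K^{arith}$, and to verify the three additional properties (v), (vi) and (vii); throughout, I would use Theorem \ref{ThmA} to identify $\mathcal A_K$ with $\mathcal E_K^{mean}$, and hence $A_K$ with the analytic endomotive $\mathcal E_K^{an}$ containing $\mathcal E_K$. Property (vii) is then essentially formal: by the construction of the analytic endomotive, $\mathcal E_K^{an}$ is a $C^*$-completion of $\mathcal E_K\otimes_K\complex = (E_K\otimes_K\complex)\rtimes I_K$, so $\mathcal E_K\otimes_K\complex$ is dense in $A_K$.

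The remaining two properties both reduce to understanding how an extremal $KMS_\infty$-state evaluates on $\mathcal E_K$ and how the symmetry group $\kab$ acts on such states. Here I would first recall, from the classification of $KMS$-states of Laca, Larsen and Neshveyev \cite{LLN} (transported through Theorem \ref{ThmA}, or read directly off the groupoid model of $\mathcal E_K^{mean}$), the following three facts. The extremal $KMS_\infty$-states of $\mathcal A_K$ form a principal homogeneous space under $\kab$. The restriction of such a state $\varrho$ to the commutative subalgebra $E_K^{an} = C(\mathrm{DR}_K) \cong C(\okhat\times_{\okhat^\times}\kab)$ --- where the last identification is proposition \ref{keyprop} --- is the evaluation at a point lying in the invertible locus $\okhat^\times\times_{\okhat^\times}\kab\cong\kab$. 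And, because the time evolution scales each $\mu_{\mathfrak a}$ by $N(\mathfrak a)^{it}$, a ground state annihilates every product $\mu_{\mathfrak a}\,x\,\mu_{\mathfrak b}^*$ unless $\mathfrak a = \mathfrak b = (1)$. Writing a general element $f \in \mathcal E_K$ as a finite sum of such products, we get $\varrho(f) = e(g)$, where $e \in E_K$ is the unit-ideal component of $f$ and $g \in \kab$ is the point attached to $\varrho$.

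The arithmetic content is now supplied by the refined Grothendieck--Galois description of Corollary \ref{ilggc}: each $E_{\mathfrak f}$ is a finite \'etale $K$-algebra whose associated finite $\galabs{K}$-set is the corresponding finite quotient of $\mathrm{DR}_K$, an action that by proposition \ref{keyprop} factors through $\kab$; equivalently, $E_{\mathfrak f}$ is canonically the $K$-algebra of $\kab$-equivariant maps from that finite set to $K^{ab}$. Hence $e(g) \in K^{ab}$, which is the first assertion of (v). For the generation statement, the point $g$ attached to $\varrho$ lies on the free $\kab$-orbit, whose stabilizer is $Gal(K^{ab}/K_{\mathfrak f})$, so the values $\varrho(f)$ with $f \in E_{\mathfrak f}$ exhaust the ray class field $K_{\mathfrak f}$; since every finite abelian extension of $K$ is contained in some $K_{\mathfrak f}$, i.e.\ $K^{ab} = \bigcup_{\mathfrak f} K_{\mathfrak f}$, letting $\mathfrak f$ vary shows that these values generate (indeed exhaust) $K^{ab}$ over $K$. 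Finally, for (vi): the symmetry $\nu \in \kab$ acts on extremal $KMS_\infty$-states by translating the parameter $g$, whereas the identification of $E_{\mathfrak f}$ with $\kab$-equivariant functions forces $e(g)$ to transform under that translation exactly as $\nu^{-1}$ acts on $K^{ab}$; unwinding the conventions gives ${}^{\nu}\hspace{-0.6mm}\varrho(f) = \nu^{-1}(\varrho(f))$.

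I expect the principal difficulty to be bookkeeping rather than a new idea: one has to line up three a priori distinct $\kab$-actions --- the Galois action on the \'etale algebras $E_{\mathfrak f}$ coming from Grothendieck--Galois, the symmetry action on the phase space and $KMS$-states of $\mathcal A_K$ coming from \cite{LLN}, and the translation action on the unit part of $\mathrm{DR}_K$ --- and check that the isomorphisms of proposition \ref{keyprop} and Theorem \ref{ThmA} intertwine them with exactly the inverses needed for the sign in (vi) to come out right, which is the familiar subtlety of the Artin reciprocity normalization. A secondary point that deserves a clean statement, best phrased through the groupoid/partial-action picture already set up for Theorem \ref{ThmA}, is the claim that an extremal $KMS_\infty$-state restricts to a point evaluation on $E_K^{an}$ supported on the invertible locus and kills the off-diagonal crossed-product terms; both facts are standard features of Bost--Connes type systems.
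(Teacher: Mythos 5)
Your proposal is correct and follows essentially the same route as the paper: identify extremal $KMS_\infty$-states with evaluations at points $\omega\in \mathrm{DR}_K^\times\cong\kab$, use $\mathrm{Hom}_{K\text{-}\mathrm{alg}}(E_\mathfrak f,\overline K)=\mathrm{Hom}_{K\text{-}\mathrm{alg}}(E_\mathfrak f,K^{ab})$ to get $\varrho_\omega(\mathrm{ev}_a)=\omega(a)\in K^{ab}$, deduce generation from the fact that the $E_\mathfrak f$ are products of strict ray class fields exhausting $K^{ab}$, and obtain (vi) from Galois equivariance of point evaluations. The only difference is that you spell out details the paper leaves implicit, namely the vanishing on off-diagonal crossed-product terms and the density statement (vii).
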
 \noindent
The proof of this theorem relies on the fact that the algebras $E_\mathfrak f$ defining the algebraic endomotive $\mathcal E _K$ are finite products of strict ray class fields of $K$ (cf., \eqref{arithalg}). In particular our main result shows that the class field theory of an arbitrary number field can be realized through the dynamics of an operator algebra. \\ \\
In the appendix (section \ref{sergey}) Sergey Neshveyev has shown moreover that under very natural conditions, satisfied by our arithmetic subalgebra, the arithmetic model of a BC-system is in fact unique, see Theorem \ref{SergeyA} and \ref{SergeyB}.
\\ \\
The second part of our paper will be concerned with functoriality properties of Bost-Connes systems.
Recently, Laca, Neshveyev and Trifkovic \cite{LNT} constructed a functor from the category of number fields to an appropriate category\footnote{Morphisms in this category are given by Hilbert $C^*$-bimodules, or in other words by $C^*$-correspondences} of BC-systems. We will show that their functor does fit naturally into the framework of endomotives by constructing an algebraic refinement of their functor. For this we develop the notion of base-change for our algebraic endomotives $\mathcal E _K$ which is rooted in the functoriality properties of Artin's reciprocity map and certain properties of the Deligne-Ribet monoid (see section \ref{basechange}). Using this tool we define a functor from the category of number fields to the category of algebraic endomotives over $\q$ by sending a field $K$ to the base-changed endomotive $\mathcal E _K ^\q$ and a morphism of number fields $K\to L$ to an algebraic bimodule $\mathcal Z _K^L$ (cf., \eqref{algmorph}). Our third main result reads then as follows  \smallskip
\begin{theorem}
\label{ThmC}
The functor defined by $K \mapsto \mathcal E _K ^\q$ and $(K\to L) \mapsto \mathcal Z _K^L$, as above recovers, by passing to the analytic endomotive, the functor constructed by Laca, Neshveyev and Trifkovic \cite{LNT}. \smallskip 
\end{theorem}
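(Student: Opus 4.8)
The plan is to show that applying the analytic endomotive functor to the data $(K \mapsto \mathcal E_K^\q,\ (K\to L)\mapsto \mathcal Z_K^L)$ reproduces, object by object and morphism by morphism, the functor of \cite{LNT}. First I would compare objects: by Theorem \ref{ThmA} the measured analytic endomotive $\mathcal E_K^{mean}$ is naturally isomorphic to $\mathcal A_K$, so I must check that base-change to $\q$ followed by analytification gives the $C^*$-algebra that \cite{LNT} assigns to $K$. Concretely, $\mathcal E_K^\q = E_K^\q \rtimes I_K$ where $E_K^\q$ is obtained from $E_K$ by the base-change construction of section \ref{basechange}; passing to the analytic endomotive replaces the inductive limit of finite \'etale $\q$-algebras by its profinite spectrum with the natural measure, and the crossed product by $I_K$ becomes the corresponding $C^*$-crossed product. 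Using proposition \ref{keyprop} (the identification $\mathrm{DR}_K \cong \okhat \times_{\ohs} \kab$) at the level of spectra, together with the functoriality of Artin reciprocity that underlies the base-change, I would identify this analytic object with the balanced-product description of the $K$-algebra underlying the BC-system used in \cite{LNT}; the time evolution matches by proposition \ref{equalmeasures}, since base-change does not alter the norm data governing $\sigma_t$.

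Second, and this is the crux, I would compare morphisms. A morphism of number fields $K \to L$ is sent on the \cite{LNT} side to a $C^*$-correspondence (a Hilbert bimodule) built from the inclusion $\okhat \hookrightarrow \olhat$ and compatibility of the Artin maps, while on our side it is sent to the algebraic bimodule $\mathcal Z_K^L$ of \eqref{algmorph}. The key step is to show that the analytic completion of $\mathcal Z_K^L$ — i.e. the Hilbert module obtained by completing in the norm coming from the $\mathcal E_L^\q$-valued inner product — is isomorphic, as a $C^*$-correspondence over the two BC-algebras, to the Hilbert bimodule of \cite{LNT}. Here I would argue that $\mathcal Z_K^L$ is already a finitely generated projective module over the relevant finite-level algebras (again by the finite \'etale, products-of-ray-class-fields structure from \eqref{arithalg}), so analytification is a clean operation: one passes to spectra, the $\mathcal E_L^\q$-valued inner product becomes a $C(\text{spectrum})$-valued inner product with the correct measure, and the left action of $\mathcal E_K^\q$ extends continuously. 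The functoriality of the Deligne-Ribet monoid (the transition maps $\mathrm{DR}_L \to \mathrm{DR}_K$ induced by $K\to L$) is what guarantees that the resulting bimodule structure is exactly the one in \cite{LNT}.

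Third, I would verify that the identifications are compatible with composition, i.e. that for $K \to L \to M$ the canonical isomorphism $\mathcal Z_L^M \otimes_{\mathcal E_L^\q} \mathcal Z_K^L \cong \mathcal Z_K^M$ matches, after analytification, the corresponding isomorphism of Hilbert bimodules in \cite{LNT}; this reduces to compatibility of the base-change construction with composition, already established in section \ref{basechange}, together with the fact that analytification takes algebraic balanced tensor products of projective modules to the $C^*$-balanced tensor products of the completed correspondences. Finally one checks the identity morphism is preserved, which is immediate.

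The main obstacle I expect is the morphism comparison: one must be careful that the inner products and left actions survive analytification without collapse, and in particular that the measure on the profinite spectrum used to define the Hilbert-module norm on the analytified $\mathcal Z_K^L$ coincides with the one \cite{LNT} use (this is where proposition \ref{equalmeasures} and the explicit description of $\mathrm{DR}_K$ in proposition \ref{keyprop} do the real work). A secondary subtlety is purely bookkeeping: \cite{LNT} phrase their bimodules in adelic terms while our $\mathcal Z_K^L$ is defined ideal-theoretically via ray class fields, so a nontrivial but routine dictionary — essentially the functoriality of Artin reciprocity — must be spelled out to see the two descriptions agree.
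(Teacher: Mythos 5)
Your overall architecture (compare objects, compare morphisms, check composition) is the right shape, but there is a genuine error at what you yourself call the crux. You justify the passage from $\mathcal Z_K^L$ to a Hilbert $C^*$-bimodule by asserting that $\mathcal Z_K^L$ is ``already a finitely generated projective module over the relevant finite-level algebras,'' so that analytification is clean. This is false, and the paper explicitly remarks on it: $\mathcal Z_K^L=\q[I_L]\otimes_{\q[I_K]}\mathcal E_K^\q$ is \emph{not} an \'etale (finite, projective) correspondence, because the complement of $I_K$ in $I_L$ is infinite. The finite \'etale, ray-class-field structure of the algebras $E_{\mathfrak f}$ is irrelevant here; the infiniteness sits in the semigroup factor $\q[I_L]$. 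What actually makes analytification work is that the fibers of the structure map $g_{K}$ are discrete and countable, so the fiberwise sum defining the $C_c(\mathcal G^{an})$-valued inner product converges and one completes to a Hilbert module; the paper explicitly enlarges the class of admissible morphisms beyond \'etale correspondences for exactly this reason. Relatedly, the inner product on the analytic correspondence is a fiberwise sum, not a ``$C(\text{spectrum})$-valued inner product with the correct measure''; the measure $\mu_K$ and Proposition \ref{equalmeasures} govern only the time evolution, which the theorem does not in any case claim to match (the LNT normalization $\widetilde\sigma_t$ is expressly \emph{not} recovered).

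You also miss the structural observation that makes the comparison with \cite{LNT} nearly tautological and renders your proposed adelic-vs-ideal-theoretic dictionary unnecessary. By construction $\mathcal Z_K^L=\q[I_L]\otimes_{\q[I_K]}\mathcal E_K^\q$, a balanced tensor product of the semigroup-algebra bimodule attached to the inclusion $I_K\subset I_L$ with the endomotive regarded as a correspondence over itself (the two basic examples of section \ref{examples}). Analytification sends this to $C^*(I_L)\otimes_{C^*(I_K)}\mathcal E_K^{an}$, which is verbatim the correspondence of Theorem 4.4 of \cite{LNT}; no further identification of inner products or left actions is needed beyond recognizing this form. Composition $\mathcal Z_L^M\otimes_{\mathcal E_L^\q}\mathcal Z_K^L\cong\mathcal Z_K^M$ then follows from associativity of the tensor product. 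So the gap to repair is twofold: replace the false projectivity claim by the discrete-countable-fiber argument, and replace the dictionary argument by the direct identification of $(\mathcal Z_K^L)^{an}$ as the inner tensor product $C^*(I_L)\otimes_{C^*(I_K)}\mathcal E_K^{an}$.
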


\subsection*{Acknowledgements}
This paper was written while the author was a PhD student at the Universit\'e Pierre et Marie Curie in Paris. I would like to thank my advisor Eric Leichtnam for his support and many useful comments  on this paper. Further, the author would like to thank James Borger for sharing the preprint \cite{BdS} and interesting discussions about $\Lambda$-rings, Eugene Ha for many useful suggestions, Matilde Marcolli for her interest and encouraging the author to work on the ideas presented in this paper and Sergey Neshveyev for his interest and for providing a very nice appendix about the uniqueness of arithmetic models. Finally, I would like to thank the referee whose valuable comments have helped to improve the exposition of this paper.

\tableofcontents

\part{Arithmetic subalgebras}
\noindent
Before we explain and perform our construction of arithmetic subalgebras in form of the algebraic endomotives $\mathcal E _K$, we will briefly recall the definition and properties of the systems $\mathcal A _K$ of Ha and Paugam, present the theory of endomotives to an extent sufficient for our applications and explain then in some detail the Deligne-Ribet monoid $\mathrm{DR}_K$ which will be an object of central importance for the construction of the algebraic endomotives $\mathcal E _K$ and the classification result of Borger and de Smit.

\subsection*{Notations and Conventions}

$K$ will always denote a number field with ring of integers $\ok$. Further, we fix an embedding $K \subset \complex$ and consider the algebraic closure $\overline K$ of $K$ in $\complex$. The maximal abelian algebraic extension of $K$ is denoted by $K^{ab}$. By $\iok$ we denote the monoid of (non-zero) integral ideals of $\ok$ and by $J_K$ the group of fractional ideals of $K$. As usual, we write $\mathbb A _K = \mathbb A _{K,f} \times \mathbb A _{K,\infty}$ for the adele ring of $K$ with $\mathbb A _{K,f}$ the finite adeles and $\mathbb A _{K,\infty}$ the infinite adeles. If $R$ is a ring, we denote by $R^\times$ its group of invertible elements. Invertible adeles are called ideles. By $\okhat \subset \mathbb A _K$ we denote the finite, integral adeles of $K$, further we set $\okhat^\natural = \mathbb A _{K,f}^\times \cap \okhat$. We denote Artin's reciprocity map by $[\cdot]_K : \mathbb A _K ^\times \to \kab$. Usually we omit the subscript ${}_K$ and write only $[\cdot]$. Moreover we denote the idele norm by $N_{K/\q} :\mathbb A _{K,f}^\times \to \mathbb A _{\q,f}^\times$ which induces in particular the norm maps $N_{K/\q} : J_K \cong \mathbb A _{K,f}^\times / \okhat ^\times \to \q$ and $N_{K/\q} : I_K \cong \okhat^\natural / \okhat^\times \to \ganz$. Also, we use the delta function $\delta_{a,b} = \left\{ \begin{array}{cc} 1 & \text{if } a=b \\ 0 & \text{otherwise}  \end{array} \right.$. Finally, we denote by $|X|$ the cardinality of a set $X$ and, for another set $Y$, we write $X\sqcup Y$ for their disjoint union.

\section{BC-systems}
\label{bctypesystems}
\noindent
Let us recall the definition of the $C^*$-dynamical systems $\mathcal A _K$ and some of its properties, following \cite{LLN}. Consider the topological space \eq{Y_K = \okhat \times_{\okhat^\times}\kab} defined as the quotient space of the direct product $\okhat \times \kab$ under the action of $\okhat^\times$ given by \eqst{s \cdot (\rho,\alpha) = (\rho s , [s]^{-1}\alpha)\,.}
There are two natural actions on $Y_K$. On the one hand, the monoid $\iok \cong \okhat^\natural / \okhat^\times$ of (non-zero) integral ideals of $K$ acts by \eq{\label{obviousaction}s\cdot[\rho,\alpha] = [\rho s , 
[s]^{-1}\alpha]\, ,} and, on the other hand, the maximal abelian Galois group $\kab$ acts by \eqst{\gamma \cdot [\rho,\alpha] = [\rho, \gamma \alpha]\,.}
The first action gives rise to the semigroup crossed product $C^*$-algebra \eq{\label{bcalgebra}A_K = C(Y_K)\rtimes \iok\, ,}
and together with the time evolution defined by \eq{\sigma_t(fu_s) = N_{K/\q}(s)^{it}fu_s\, ,}
where $f \in C(Y_K)$ and $u_s$ the isometry encoding the action of $s \in \iok$, we end up with the BC-system of $K$ in form of the $C^*$-dynamical system \eq{\label{bctype}\mathcal A_K = (A_K,\sigma_t)\,.}
Moreover, the action of the Galois group $\kab$ on $Y_K$ induces naturally a map \eqst{\kab \longrightarrow \mathrm{Aut}(\mathcal A _K)\,.}
Later we will need the classification of extremal $\sigma$-$KMS_\beta$-states, as given elegantly in \cite{LLN}, at $\beta = 1$ and $\beta = \infty$. The approach of \cite{LLN} relates $KMS_\beta$-states of $\mathcal A _K$ to measures on $Y_K$ with certain properties. We recommend the reader to consult their paper.
\subsection{Classification at $\beta =1$}
\label{measure1}
\noindent
In the proof of \cite{LLN} Theorem 2.1 it is shown that the unique $KMS_1$-state of $\mathcal A _K$ corresponds to the measure $\mu_1$ on $Y_K$ which is given by the push-forward (under the natural projection) of the product measure \eqst{\prod_{\mathfrak p}\mu_\mathfrak p \times \mu_\mathcal G} on $\okhat \times \kab$,
where $\mu_\mathcal G$ is the normalized Haar measure on $\kab$, and $\mu_\mathfrak p$ is the additive normalized Haar measure on $\mathcal O _{K_\mathfrak p}$. Equivalently, it is shown that $\mu_1$ is the unique measure on $Y_K$ satisfying $\mu_1(Y_K) = 1$, and the scaling condition \eq{\label{scalingcond} \mu_1(gZ) = \mathcal N_{K/\q}(g)^{-1}\mu_1(Z)\, ,} for every Borel subset $Z \subset Y_K$ and $g \in J_K = \mathbb A _{K,f}^\times / \okhat^\times$ such that $gZ \subset Y_K$\footnote{The action of the group $J_K \cong \mathbb A _{K,f}^\times / \okhat^\times$ of fractional ideals of $K$ on $Y_K$ is the one given in \eqref{obviousaction}.}.

\subsection{Classification at $\beta = \infty$}
\label{kmsinfty}
\noindent
The set of extremal $KMS_\infty$-states of $\mathcal A _K$ is parametrized by the subset $Y_K^\times = \okhat^\times \times_{\okhat^\times} \kab$ of $Y_K$, and for $\omega \in Y_K^\times$ the corresponding extremal $KMS_\infty$-state $\varphi_\omega$ is given by \eqst{\varphi_\omega(fu_s) = \delta_{s,1}f(\omega)\,.}
In other words, extremal $KMS_\infty$-states of $\mathcal A _K$ correspond to Dirac measures on $Y_K$ with support in $Y_K^\times$.

\section{Endomotives}
\label{endomotives} \noindent
We will recall briefly the theory of endomotives, following our main reference \cite{ConMar}. Endomotives come in three different flavours: algebraic, analytic and measured analytic. Each aspect could be developed independently, but for our purposes, it is enough to concentrate on algebraic endomotives, and show how to associate an analytic and a measured analytic endomotive to it. \\ \\
Recall that we fixed an embedding $ K \subset \complex$ and understand $\overline K$ to be the algebraic closure of $K$ in $\complex$.

\subsection{Algebraic flavour}
\noindent
We denote by $\mathfrak E_K$ the category of finite dimensional, \'etale $K$-algebras with morphisms given by $K$-algebra homomorphisms.
Let $((A_i)_{i \in I},S)$ be a pair consisting of an inductive system $(A_i)_{i\in I}$ (with transition maps $\xi_{i,j}$ for $i\leq j$) in $\mathfrak E_K$ and an abelian semigroup $S$ acting on the inductive limit $A = \varinjlim_i A_i$ by $K$-algebra endomorphisms. We don't require the action of $S$ to respect the levels $A_i$ or to be unital, so in general $e = \rho(1)$, for $\rho \in S$, will only be an idempotent, i.e., $e^2 = e$. Moreover, we assume that every $\rho \in S$ induces an isomorphism of $K$-algebras $\rho : A \overset\cong\longrightarrow eAe = eA$. 
\begin{definition}
Let $((A_i),S)$ be a pair like above. Then the associated algebraic endomotive $\mathcal E$ is defined to be the associative, unital $K$-algebra given by the crossed product \eqst{\mathcal E = A\rtimes S \,.}
\end{definition} \noindent
The algebraic endomotive $\mathcal E$ can be described explicitly in terms of generators and relations by adjoining to $A$ new generators $U_\rho$ and $U^*_\rho$, for $\rho \in S$, and imposing the relations 

\begin{equation*}\begin{array}{ccc}U_\rho^*U_\rho = 1, & U_\rho U_\rho^* = \rho(1), &\forall \ \rho \in S \\ U_{\rho_1}U_{\rho_2}=U_{\rho_1\rho_2}, &U^*_{\rho_2\rho_1}=U^*_{\rho_1}U^*_{\rho_2}, & \forall \ \rho_1,\rho_2 \in S \\ U_\rho a = \rho(a) U_\rho, & aU^*_\rho = U^*_\rho \rho(a), & \forall \ \rho\in S, \forall \ a \in A \, .
\end{array} \end{equation*}

\begin{lemma}[Lemma 4.18 \cite{ConMar}]
\label{struclem}
1) The algebra $\mathcal E$ is the linear span of the monomials $U^*_{\rho_1}aU_{\rho_2}$, for $a \in A$ and $\rho_1,\rho_2 \in S$. \\
2) The product $U_g = U^*_{\rho_2}U_{\rho_1}$ only depends on the ratio $g=\rho_1 / \rho_2$ in the group completion $\widetilde S$ of $S$. \\
3) The algebra $\mathcal E$ is the linear span of the monomials $aU_g$, for $a \in A$ and $g \in \widetilde S$.
\end{lemma}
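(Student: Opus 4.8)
The plan is to treat the three parts in order, with part (1) carrying essentially all the weight. Write $e_\rho=\rho(1)$ and note the two elementary consequences $e_\rho U_\rho=U_\rho$ and $U^*_\rho e_\rho=U^*_\rho$ of the defining relations (for instance $e_\rho U_\rho=U_\rho U^*_\rho U_\rho=U_\rho$). For (1), let $B\subseteq\mathcal E$ be the $K$-linear span of all monomials $U^*_{\rho_1}aU_{\rho_2}$ with $a\in A$ and $\rho_1,\rho_2\in S$; since $\mathcal E$ is generated as a unital $K$-algebra by $A$ together with the symbols $U_\rho$ and $U^*_\rho$, it suffices to show that $B$ is a unital subalgebra containing all of these generators. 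Fixing any $\sigma\in S$, the relations give $1=U^*_\sigma U_\sigma\in B$, then $a=U^*_\sigma\sigma(a)U_\sigma\in B$ for $a\in A$ (using $aU^*_\sigma=U^*_\sigma\sigma(a)$), and $U_\rho=U^*_\sigma U_{\sigma\rho}\in B$, $U^*_\rho=U^*_{\sigma\rho}U_\sigma\in B$, so $B$ indeed contains all generators.

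The real content of (1) is that $B$ is closed under multiplication. In a product $(U^*_\alpha aU_\beta)(U^*_\gamma bU_\delta)$ the only subword not already in the right shape is the middle piece $U_\beta U^*_\gamma$, and to rewrite it I would use that $S$ is abelian: from $U_\gamma U_\beta=U_{\gamma\beta}=U_{\beta\gamma}=U_\beta U_\gamma$ one gets $U_\gamma(U_\beta U^*_\gamma)=U_\beta U_\gamma U^*_\gamma=U_\beta e_\gamma$, and left-multiplying by $U^*_\gamma$ (which cancels $U^*_\gamma U_\gamma=1$) gives $U_\beta U^*_\gamma=U^*_\gamma U_\beta e_\gamma=U^*_\gamma e_{\beta\gamma}U_\beta$, the last equality coming from $U_\beta e_\gamma=U_\beta\gamma(1)=\beta(\gamma(1))U_\beta=e_{\beta\gamma}U_\beta$. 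Substituting this into the product and then moving the surviving $a$ and $b$ past the remaining generators via $aU^*_\gamma=U^*_\gamma\gamma(a)$ and $U_\beta b=\beta(b)U_\beta$, everything collapses to the single monomial $U^*_{\gamma\alpha}\bigl(\gamma(a)\,e_{\beta\gamma}\,\beta(b)\bigr)U_{\beta\delta}\in B$. Hence $B=\mathcal E$, which is (1).

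For (2), the key point is the cancellation identity $U^*_{\sigma\rho_2}U_{\sigma\rho_1}=U^*_{\rho_2}U_{\rho_1}$, valid for every $\sigma\in S$, which is immediate from $U^*_{\sigma\rho_2}=U^*_{\rho_2}U^*_\sigma$, $U_{\sigma\rho_1}=U_\sigma U_{\rho_1}$ and $U^*_\sigma U_\sigma=1$. If $\rho_1/\rho_2=\rho_1'/\rho_2'$ in the group completion $\widetilde S$ of the abelian semigroup $S$, there is by definition some $\sigma\in S$ with $\sigma\rho_1\rho_2'=\sigma\rho_1'\rho_2$ in $S$; applying the cancellation identity with the element $\sigma\rho_2'$ rewrites $U^*_{\rho_2}U_{\rho_1}$ as $U^*_{\sigma\rho_2\rho_2'}U_{\sigma\rho_1\rho_2'}$, and applying it with $\sigma\rho_2$ rewrites $U^*_{\rho_2'}U_{\rho_1'}$ as $U^*_{\sigma\rho_2\rho_2'}U_{\sigma\rho_1'\rho_2}$, and these two are equal since $\sigma\rho_1\rho_2'=\sigma\rho_1'\rho_2$. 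Thus $U_g:=U^*_{\rho_2}U_{\rho_1}$ depends only on $g=\rho_1/\rho_2\in\widetilde S$, which is (2).

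Part (3) then follows by combining the first two, and this is where the care is needed. Starting from a spanning monomial of (1) and using $U^*_{\rho_1}=U^*_{\rho_1}e_{\rho_1}$ together with $U_{\rho_2}=e_{\rho_2}U_{\rho_2}$, one may rewrite $U^*_{\rho_1}aU_{\rho_2}=U^*_{\rho_1}(e_{\rho_1}ae_{\rho_2})U_{\rho_2}$ and hence assume $a\in e_{\rho_1}Ae_{\rho_2}$. Here I would invoke that $A$ is commutative --- it is a direct limit of \'etale, hence commutative, $K$-algebras, which is exactly why the hypotheses already record $eAe=eA$ --- so that $a\in e_{\rho_1}Ae_{\rho_2}\subseteq e_{\rho_1}A=e_{\rho_1}Ae_{\rho_1}=\rho_1(A)$; writing $a=\rho_1(b)$ with $b\in A$ and using $bU^*_{\rho_1}=U^*_{\rho_1}\rho_1(b)$ yields $U^*_{\rho_1}aU_{\rho_2}=bU^*_{\rho_1}U_{\rho_2}=bU_g$ with $g=\rho_2/\rho_1$. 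Therefore $\mathcal E$ is the linear span of the $aU_g$, $a\in A$, $g\in\widetilde S$. The step I expect to be the main obstacle is precisely this idempotent bookkeeping forced by the non-unitality of the $S$-action --- the derivation of $U_\beta U^*_\gamma=U^*_\gamma e_{\beta\gamma}U_\beta$ in (1) and the identification $e_{\rho_1}Ae_{\rho_2}=\rho_1(A)$ in (3) --- and what makes it go through is exactly the commutativity of $A$ together with the abelianness of $S$.
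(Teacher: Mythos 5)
Your proof is correct, and since the paper simply quotes this as Lemma 4.18 of \cite{ConMar} without reproducing an argument, there is nothing in the text for it to diverge from; your computation (closure of the span under products via $U_\beta U^*_\gamma=U^*_\gamma e_{\beta\gamma}U_\beta$, the cancellation identity for part (2), and the identification $e_{\rho_1}Ae_{\rho_2}\subseteq\rho_1(A)$ for part (3)) is the standard one underlying the cited lemma. All steps check out against the stated relations and the hypotheses that $S$ is abelian and $\rho\colon A\to eAe=eA$ is an isomorphism.
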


\begin{remark}
Equivalently one can rephrase the theory of algebraic endomotives in the language of Artin motives. Namely, every finite, \'etale $K$-algebra $B$ gives rise to a zero-dimensional variety $\mathrm{Spec}(B)$, or in other words, to an Artin motive. This coined the term "endomotive".
\end{remark}

\subsection{Analytic flavour}
\label{anflavour}
\noindent
Given an algebraic endomotive $((A_i),S)$, we obtain a topological space $\mathcal X$ defined by the projective limit \eqst{\mathcal X = \varprojlim_i \mathrm{Hom}_{K\text{-alg}}(A_i,\overline K)\, ,}
which is equipped with the profinite topology, i.e., $\mathcal X$ is a totally disconnected compact Hausdorff space\footnote{In other words $\mathcal X$ is given by the $\overline K$-points of the provariety $\varprojlim Spec(A_i)$}. Using $\mathcal X \cong \mathrm{Hom}_{K\text{-alg}}(\varinjlim A_i,\overline K) = \mathrm{Hom}_{K\text{-alg}}(A,\overline K)$, we see in particular that each $\rho \in S$ induces a homeomorphism $\rho : \mathcal X^e = \mathrm{Hom}(eA,\overline K)\longrightarrow \mathcal X$ by $\chi \in \mathcal X^e \mapsto \chi \circ \rho \in \mathcal X$, where $e =\rho(1)$. In this way, we get an action of $S$ on the abelian $C^*$-algebra $C(\mathcal X)$ by endomorphisms \begin{equation*}\phi(f)(\chi)= \left\{ \begin{array}{cc} 0 & \text{if }\chi(e)=0 \\ f(\chi\circ \rho) &\text{if }\chi(e)=1  \end{array}\right. \end{equation*} 
and we can consider the semigroup crossed product $C^*$-algebra (see, e.g., \cite{Laca2}) \eqst{\mathcal E^{an} = C(\mathcal X)\rtimes S\, ,}
which we define to be the \textbf{analytic endomotive} of the algebraic endomotive $((A_i),S)$. Using the embedding $\iota : K \to \complex$ we obtain an embedding of commutative algebras $A \hookrightarrow C(\mathcal X)$ by \eqst{a \mapsto \mathrm{ev}_a : \chi \mapsto \chi(a)\, ,}
and this induces an embedding of algebras \eqst{\mathcal E = A \rtimes S \hookrightarrow C(\mathcal X)\rtimes S\,.}
The algebraic endomotive is said to give an \textit{arithmetic structure} to the analytic endomotive $\mathcal E ^{an}$.

\subsubsection{Galois action}
\noindent
The natural action of the absolute Galois group $\mathrm{Gal}(\overline K /K)$ on $\mathcal X = \mathrm{Hom}(A,\overline K)$ induces an action of $\mathrm{Gal}(\overline K /K)$ on the analytic endomotive $\mathcal E^{an}$ by automorphisms preserving the abelian $C^*$-algebra $C(\mathcal X)$ and fixing the $U_\rho$ and $U_\rho^*$. Moreover, the action is compatible with pure states on $\mathcal E^{an}$ which do come from $C(\mathcal X)$ in the following sense (see Prop.~4.29 \cite{ConMar}). For every $a \in A$, $\alpha \in \mathrm{Gal}(\overline K / K)$, and any pure state $\varphi$ on $C(\mathcal X)$, we have $\varphi(a) \in \overline K$ and \eqst{\alpha(\varphi(a)) = \varphi(\alpha^{-1}(a))\,.}
Moreover, it is not difficult to show (see Prop.~4.30 \cite{ConMar}) that in case where all the $A_i$ are finite products of abelian, normal field extensions of $K$, as in our applications later on,  
the action of $\mathrm{Gal}(\overline K / K)$ on $\mathcal E^{an}$ descends to an action of the maximal abelian quotient $\mathrm{Gal}(K^{ab}/K)$.

\subsection{Measured analytic flavour}
\noindent
Let us start again with an algebraic endomotive $((A_i),S)$. On every finite space $\mathcal X_i = \mathrm{Hom}(A_i,\overline K)$, we can consider the normalized counting measure $\mu_i$.
We call our algebraic endomotive \textit{uniform}, if the transition maps $(\xi_{i,j})$ are compatible with the normalized counting measures, i.e., \eqst{\mu_i = (\xi_{i,j})_*\mu_j \text{, for all } i\leq j\,.} In this case the $\mu_i$ give rise to a projective system of measures and induce a propability measure $\mu$, the so-called Prokhorov extension, on $\mathcal X = \varprojlim \mathcal X _i$ (compare p. 545 \cite{ConMar}). 

\subsubsection{A time evolution}
\label{ttcon}
\noindent
Now, let us write $\varphi = \varphi_\mu$ for the corresponding state on the analytic endomotive $\mathcal E ^{an} = C(\mathcal X)\rtimes S$  given by \eqst{\varphi(fu_s) = \delta_{s,1} \int_\mathcal X f d\mu \,.}
The GNS construction gives us a representation $\pi_\varphi$ of $\mathcal E^{an}$ on a Hilbert space $\mathcal H_\varphi$ (depending only on $\varphi$). Further, we obtain a von Neumann algebra $\mathcal M _\varphi$ as the bicommutant of the image of $\pi_\varphi$, and, under certain technical assumptions on $\varphi$ (see pp. 616 \cite{ConMar}), the theory of Tomita-Takesaki equips $\mathcal M_\varphi$ with a time evolution $\sigma^{\varphi} : \real \to Aut(\mathcal M _\varphi)$, the so-called modular automorphism group. Now, if we \textbf{assume} that $\pi_\varphi$ is faithful, and moreover, the time evolution $\sigma^\varphi$ respects the $C^*$-algebra $C(\mathcal X) \rtimes S \cong \pi_\varphi(C(\mathcal X)\rtimes S) \subset \mathcal M_\varphi$, we end up with a $C^*$-dynamical system \eqst{\mathcal E ^{mean} = (C(\mathcal X)\rtimes S,\sigma^{\varphi})\, ,}
which we call a measured analytic endomotive. If it exists, it only depends on the (uniform) algebraic endomotive we started with.

\section{The Deligne-Ribet monoid}
\noindent
We follow \cite{DelRib} and \cite{BdS} in this section. Recall that $\iok$ denotes the monoid of (non-zero) integral ideals of our number field $K$. For every $\mathfrak f \in \iok$ we define an equivalence relation $\sim_\mathfrak f$ on $\iok$ by \eqst{\mathfrak a \sim_\mathfrak f \mathfrak b :\Leftrightarrow \exists \ x \in K^\times_+ \cap (1+\mathfrak f \mathfrak b^{-1}) : (x) = \mathfrak a \mathfrak b^{-1}\, ,}
where $K^\times_+$ denotes the subgroup of totally positive units in $K$ and $(x)$ the fractional ideal generated by $x$. The quotient \eqst{DR_\mathfrak f = \iok / \sim_\mathfrak f} is a finite monoid under the usual multiplication of ideals. We denote elements in $DR_\mathfrak f$ by $[\mathfrak a]_\mathfrak f$, with $\mathfrak a \in I_K$, but often we omit the bracket $[\cdot]_\mathfrak f$. Moreover, for every $\mathfrak f \mid \mathfrak f'$, we obtain a natural projection map \eq{\label{projmaps}\pi_{\mathfrak f,\mathfrak f'}:DR_{\mathfrak f'} \longrightarrow DR_\mathfrak f \text{, \  \ \ \ \ \ } [\mathfrak a]_\mathfrak {f'} \longmapsto [\mathfrak a]_\mathfrak f} and thus a projective system $(I_\mathfrak f)_{\mathfrak f \in \iok}$, whose limit \eqst{\mathrm{DR}_K = \varprojlim_\mathfrak f DR_\mathfrak f} is a (topological) monoid\footnote{We take the profinite topology.}, called the \textbf{Deligne-Ribet monoid} of $K$. 

\subsection{Some properties of $\mathrm{DR}_K$} 
\noindent
First, we have to recall some notations. A cycle $\mathfrak h$ is given by a product $\prod_\mathfrak p \mathfrak p ^{n_\mathfrak p}$ running over all primes of $K$, where the $n_\mathfrak p$'s are non-negative integers, with only finitely many of them non-zero. Further $n_{\mathfrak p} \in \{0,1\}$ for real primes, and $n_\mathfrak p = 0$ for complex primes. The finite part $\prod_{\mathfrak p \nmid \infty} \mathfrak p ^{n_\mathfrak p}$ can be viewed as an element in $\iok$. Moreover, we write $(\infty)$ for the cycle $\prod_{\mathfrak p \text{ real}} \mathfrak p$. \\ \\
If we denote by $C_\mathfrak f$ the (strict) ray class group of $K$ associated with the cycle $\mathfrak f(\infty)$, for $\mathfrak f \in \iok$, one can show that \eq{\label{prop1} DR_\mathfrak f ^\times = C_\mathfrak f\, ,} i.e., the group of invertible elements $DR_\mathfrak f ^\times$ can be identified naturally with $C_{\mathfrak f}$ (see (2.6) \cite{DelRib}). By class field theory, we know that the strict ray class group $C_\mathfrak f$ can be identified with the Galois group (over $K$) of the strict ray class field $K_\mathfrak f$ of $K$, i.e., we have \eqst{C_\mathfrak f \cong \mathrm{Gal}(K_\mathfrak f / K)\,.} 
As an immediate corollary, we obtain \eq{\label{prop3} \mathrm{DR}_K^\times = \varprojlim_{\mathfrak f} C_\mathfrak f \cong \varprojlim_\mathfrak f \mathrm{Gal}(K_\mathfrak f / K) \cong \mathrm{Gal}(K^{ab}/K)\, ,}
i.e., using class field theory, we can identify the invertible elements of $\mathrm{DR}_K$ with the maximal abelian Galois group of $K$.  
Moreover, we have the following description \eq{\label{prop2} DR_\mathfrak f \cong \coprod_{ \mathfrak d \mid \mathfrak f} C_{\mathfrak f / \mathfrak d}\, ,} where an element $\mathfrak a \in C_{\mathfrak f / \mathfrak d}$ is sent to $\mathfrak a \mathfrak d \in DR_\mathfrak f$ (see the bottom of p. 239 \cite{DelRib} or \cite{BdS}). \\
There is an important map of topological monoids \eq{\label{propdef1} \iota : \okhat \longrightarrow \mathrm{DR}_K}
given as follows: For $m_\mathfrak f \in \mathcal O _K / \mathfrak f$, we choose a lift $m_\mathfrak f^+ \in \mathcal O _{K,+}$, and map this to the ideal $(m_\mathfrak f^+) \in DR_\mathfrak f$. The map $\iota$ is then defined by \eqst{(m_\mathfrak f) \in \varprojlim_{\mathfrak f} \mathcal O _K / \mathfrak f \cong \okhat \longmapsto ( (m_{\mathfrak f}^+) ) \in \varprojlim_\mathfrak f DR_\mathfrak f = \mathrm{DR}_K\, ,}
which can be shown to be independent of the choice of the liftings (Prop. 2.13 \cite{DelRib}). \\
Let us denote by $U_K^+$ the closure of the totally positive units $\mathcal O _{K,+}^\times = \ok^\times \cap K^\times_+$ in $\okhat^\times$.
\begin{proposition}[Prop. 2.15 \cite{DelRib}]
\label{iotakernel}
Let $\rho,\rho' \in \okhat$. Then $\iota(\rho) = \iota(\rho')$ if and only if $\rho = u\rho' $ for some $u \in U_K^+$.
\end{proposition}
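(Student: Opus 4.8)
The plan is to work level by level and then pass to the projective limit, using the explicit description of $\iota$ given just before the statement. First I would reduce the ``only if'' direction to a finite-level assertion: given $\rho = (m_{\mathfrak f})$ and $\rho' = (m'_{\mathfrak f})$ in $\okhat = \varprojlim \ok/\mathfrak f$ with $\iota(\rho) = \iota(\rho')$, the definition of $\iota$ says that for every $\mathfrak f \in \iok$ the ideals $(m_{\mathfrak f}^+)$ and $((m'_{\mathfrak f})^+)$ represent the same class in $DR_{\mathfrak f}$. Unwinding the equivalence relation $\sim_{\mathfrak f}$, this means there is a totally positive $x \in K^\times_+ \cap (1 + \mathfrak f ((m'_{\mathfrak f})^+)^{-1})$ with $(x) = (m_{\mathfrak f}^+)((m'_{\mathfrak f})^+)^{-1}$; since both generators are totally positive algebraic integers, $x$ must be a totally positive unit $u_{\mathfrak f} \in \ok^\times_+$ with $m_{\mathfrak f}^+ = u_{\mathfrak f}\,(m'_{\mathfrak f})^+ $ up to the congruence, so in $\ok/\mathfrak f$ we get $m_{\mathfrak f} = \bar u_{\mathfrak f}\, m'_{\mathfrak f}$ for some $\bar u_{\mathfrak f}$ in the image of $\ok^\times_+ \to (\ok/\mathfrak f)^\times$.

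The core of the argument is then a compactness/limit step: the relation $\rho = u \rho'$ for a single $u$ in the closure $U_K^+$ must be produced from the collection of level-wise relations $m_{\mathfrak f} = \bar u_{\mathfrak f}\, m'_{\mathfrak f}$, where a priori the $\bar u_{\mathfrak f}$ need not be coherent. I would organize this as follows. For each $\mathfrak f$ let $V_{\mathfrak f} \subset \okhat^\times$ be the (closed, nonempty by the previous paragraph) set of $v \in \okhat^\times$ lying in the closure of $\ok^\times_+$ and satisfying $v \rho' \equiv \rho \pmod{\mathfrak f}$, i.e. reducing modulo $\mathfrak f$ to an admissible $\bar u_{\mathfrak f}$. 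The $V_{\mathfrak f}$ form a decreasing family of nonempty closed subsets of the compact space $\okhat^\times$ (decreasing because $\mathfrak f \mid \mathfrak f'$ forces the mod-$\mathfrak f'$ condition to imply the mod-$\mathfrak f$ one), so by the finite intersection property $\bigcap_{\mathfrak f} V_{\mathfrak f} \neq \emptyset$; any $u$ in this intersection lies in $U_K^+$ (it is in the closure of $\ok^\times_+$, and one checks $U_K^+$ is exactly that closure intersected with $\okhat^\times$, which is automatic) and satisfies $u\rho' = \rho$ in every $\ok/\mathfrak f$, hence $u\rho' = \rho$ in $\okhat$. The converse direction is the easy one: if $\rho = u\rho'$ with $u \in U_K^+$, approximate $u$ by totally positive units; since $\iota$ is a homomorphism of topological monoids and is continuous, and since a totally positive unit $\varepsilon$ maps to the class $[(\varepsilon)]_{\mathfrak f} = [(1)]_{\mathfrak f} = [\ok]_{\mathfrak f}$ at every level (as $(\varepsilon) = \ok$), we get $\iota(u) = \iota(\lim \varepsilon_n) = \lim \iota(\varepsilon_n) = 1$, and multiplicativity gives $\iota(\rho) = \iota(u)\iota(\rho') = \iota(\rho')$.

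The step I expect to be the main obstacle is the level-wise analysis identifying the ``admissible multipliers'' at each finite level: one has to check carefully that the equation $(x) = (m_{\mathfrak f}^+) ((m'_{\mathfrak f})^+)^{-1}$ together with $x$ totally positive and $x \equiv 1$ modulo $\mathfrak f ((m'_{\mathfrak f})^+)^{-1}$ really does force $x$ to be a totally positive \emph{unit} that conjugates $m'_{\mathfrak f}$ to $m_{\mathfrak f}$ modulo $\mathfrak f$ — in particular one must handle the case where $m'_{\mathfrak f}$ (equivalently $m_{\mathfrak f}$) is a zero divisor in $\ok/\mathfrak f$, i.e. $((m'_{\mathfrak f})^+)$ is not coprime to $\mathfrak f$, where the ray class description and the congruence condition on $x$ interact less transparently. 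Since Deligne--Ribet prove exactly this statement (Prop.~2.15 of \cite{DelRib}), in the write-up I would either cite their argument directly or present the above finite-intersection packaging as the main new content, deferring the delicate level-wise computation to their reference; it also essentially re-uses the proof that $\iota$ is well defined (Prop.~2.13 of \cite{DelRib}), where the independence-of-lifting argument already contains the unit-bookkeeping needed here.
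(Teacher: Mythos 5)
The paper does not actually prove this proposition: it is quoted verbatim from Deligne--Ribet (Prop.\ 2.15 of \cite{DelRib}), so there is no in-paper argument to compare yours against. Your overall architecture --- reduce to a statement at each finite level, then glue the level-wise multipliers into a single $u\in U_K^+$ via the finite intersection property in the compact group $\okhat^\times$, with the converse following from continuity and multiplicativity of $\iota$ together with $\iota(\varepsilon)=1$ for $\varepsilon\in\ok^\times\cap K^\times_+$ --- is sound and would yield a complete, self-contained proof, which is more than the paper offers.

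However, the level-wise step contains a concrete error, exactly at the point you flag as delicate. From $(x)=(m_\mathfrak f^+)\,((m'_\mathfrak f)^+)^{-1}$ it does \emph{not} follow that $x$ is a unit: $x$ is a totally positive generator of a generally nontrivial principal fractional ideal. The unit enters differently: the equality of ideals gives $x\,(m'_\mathfrak f)^+=\varepsilon\, m_\mathfrak f^+$ for some $\varepsilon\in\ok^\times$, and total positivity of $x$, $m_\mathfrak f^+$ and $(m'_\mathfrak f)^+$ forces $\varepsilon$ to be totally positive. Multiplying the congruence $x-1\in\mathfrak f\,((m'_\mathfrak f)^+)^{-1}$ by $(m'_\mathfrak f)^+$ yields $\varepsilon\,m_\mathfrak f^+\equiv (m'_\mathfrak f)^+\pmod{\mathfrak f}$, i.e.\ $m'_\mathfrak f=\bar\varepsilon\, m_\mathfrak f$ in $\ok/\mathfrak f$ with $\varepsilon$ a totally positive \emph{global} unit --- which is precisely the input your compactness argument needs (note also that this computation requires no coprimality of $(m'_\mathfrak f)^+$ with $\mathfrak f$, so the zero-divisor case you worry about causes no trouble). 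With this one-line repair, and the observation that the global units $\varepsilon^{-1}$ so produced witness the nonemptiness of each $V_\mathfrak f$, your argument goes through as written.
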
 \noindent
Therefore, it makes sense to speak of $\iota$ having kernel $U_K^+$. Moreover, if we denote by $(\rho) \in \iok$ (resp.~$[\rho] \in \kab$) the ideal generated by an idele (resp.~the image under Artin reciprocity's map), then we have the following:
\begin{proposition}[Prop. 2.20 and 2.23 \cite{DelRib}]
\label{centralprop}
For $\rho \in \okhat^\natural$, we have \eqst{\iota(\rho) = (\rho) [\rho]^{-1} \in \mathrm{DR}_K\,.}
In particular, for $\rho \in \okhat^\times$, we obtain \eqst{\iota(\rho) = [\rho]^{-1} \in \mathrm{DR}_K^{\times}\,.} 
\begin{remark}
\label{intersection}
The reader should keep in mind, that the intersection $\iok \cap \mathrm{DR}_K^\times$ is trivial.
\end{remark}
\end{proposition}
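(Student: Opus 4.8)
The ``in particular'' clause is immediate from the first formula: for $\rho \in \okhat^\times$ one has $(\rho)=\ok$, which is the unit of $\iok$ and hence of $\mathrm{DR}_K$, while $[\rho]^{-1}\in\mathrm{DR}_K^\times$ by \eqref{prop3}; so $\iota(\rho)=(\rho)[\rho]^{-1}=[\rho]^{-1}$. The embedded Remark is equally quick: by \eqref{prop2} an ideal maps to an invertible element of a given $DR_\mathfrak f$ only if it is prime to $\mathfrak f$, and an ideal prime to every $\mathfrak f$ is $(1)$. It therefore remains to prove $\iota(\rho)=(\rho)[\rho]^{-1}$ for all $\rho\in\okhat^\natural$. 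This makes sense, because $\mathrm{DR}_K$ is a commutative monoid containing $\iok$ and, via \eqref{prop1} and \eqref{prop3}, the group $\kab\cong\mathrm{DR}_K^\times$; and since $\mathrm{DR}_K=\varprojlim_\mathfrak f DR_\mathfrak f$, it suffices to check the identity in each finite monoid $DR_\mathfrak f$.

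Fix $\mathfrak f$. The idea is to replace the idele $\rho$ by a well-chosen totally positive global element. Since $\iota(\rho)_\mathfrak f$ does not depend on the lift (Proposition~2.13 of \cite{DelRib}), and since a coset of an integral ideal is a translate of a full lattice in $\ok\otimes_\ganz\real$ and hence meets the totally positive cone, I can pick $a\in\mathcal O_{K,+}$ with $a\equiv\rho\pmod{\mathfrak f}$ and $\ord_\mathfrak p(a)=\min\bigl(\ord_\mathfrak p(\rho),\ord_\mathfrak p(\mathfrak f)\bigr)$ for every $\mathfrak p\mid\mathfrak f$. Then $\iota(\rho)_\mathfrak f=[(a)]_\mathfrak f$; set $\mathfrak d=\prod_{\mathfrak p\mid\mathfrak f}\mathfrak p^{\ord_\mathfrak p(a)}$, which by the choice of $a$ divides $\mathfrak f$. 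By the explicit description \eqref{prop2} the element $[(a)]_\mathfrak f$ lies in the component $C_{\mathfrak f/\mathfrak d}$ and is represented there by the ideal $(a)\mathfrak d^{-1}$, which is prime to $\mathfrak f$.

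Now form the idele $w=a\rho^{-1}\in\mathbb A_K^\times$, taking the totally positive $a$ as its archimedean component. As $a$ is a principal idele, $[a]=1$, hence $[\rho]^{-1}=[w]$. The point of the choice of $a$ is that $w$ factors as $w=w'w''$, with $w'$ in the congruence subgroup $W_{\mathfrak f/\mathfrak d}\subset\mathbb A_K^\times$ defining the strict ray class field $K_{\mathfrak f/\mathfrak d}$ of conductor $(\mathfrak f/\mathfrak d)(\infty)$, and $w''$ an idele supported away from $(\mathfrak f/\mathfrak d)\infty$ whose associated ideal is $(a)\mathfrak d^{-1}$ divided by the prime-to-$\mathfrak f$ part of $(\rho)$. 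Consequently $[\rho]^{-1}|_{K_{\mathfrak f/\mathfrak d}}=[w]|_{K_{\mathfrak f/\mathfrak d}}$ equals, under the reciprocity isomorphism $C_{\mathfrak f/\mathfrak d}\cong\mathrm{Gal}(K_{\mathfrak f/\mathfrak d}/K)$ of class field theory, the class of the ideal of $w''$. Feeding this into the product $(\rho)[\rho]^{-1}$ evaluated in $DR_\mathfrak f$ --- with the help of \eqref{prop2} and of the multiplicativity of the quotient map $\iok\to DR_\mathfrak f$ --- one checks that the prime-to-$\mathfrak f$ contributions cancel and that $(\rho)[\rho]^{-1}$ again lies in $C_{\mathfrak f/\mathfrak d}$, represented there by the ideal $(a)\mathfrak d^{-1}$; that is, $(\rho)[\rho]^{-1}=[(a)]_\mathfrak f=\iota(\rho)_\mathfrak f$, as desired.

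The hard part is exactly this last piece of bookkeeping, and it splits into two delicate points. First, one has to pin down the normalization of Artin's map $[\,\cdot\,]$ used here --- equivalently, whether a prime-to-$\mathfrak f$ uniformizer idele maps to geometric or to arithmetic Frobenius --- and match it with the conventions of \cite{DelRib}; in combination with the strictness of $K_\mathfrak f$ (so that the real places ramify) and with the total positivity of $a$, this is precisely what forces $[\rho]^{-1}$, and not $[\rho]$, to appear in the formula. Second, one must correctly identify the component $C_{\mathfrak f/\mathfrak d}$ of $DR_\mathfrak f$ containing each side, and hence the subfield $K_{\mathfrak f/\mathfrak d}\subseteq K_\mathfrak f$ to which $[w]$ is restricted, so that at that level $w$ lies in the congruence subgroup $W_{\mathfrak f/\mathfrak d}$ up to an idele prime to the conductor and $[w]$ therefore computes a plain ideal class; this is the reason $a$ must be chosen with the prescribed valuations $\ord_\mathfrak p(a)=\min(\ord_\mathfrak p(\rho),\ord_\mathfrak p(\mathfrak f))$ at the primes dividing $\mathfrak f$, and not merely as an arbitrary totally positive lift of $\rho\bmod\mathfrak f$. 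Beyond these points the computation is routine, and in any event the statement is exactly \cite[Prop.~2.20 and 2.23]{DelRib}.
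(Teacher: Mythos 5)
The paper offers no proof of this proposition: it is imported verbatim from Deligne--Ribet (Prop.\ 2.20 and 2.23 of \cite{DelRib}), so there is no internal argument to measure yours against. Your outline follows what is essentially the standard (and Deligne--Ribet's own) line: the ``in particular'' clause and the embedded remark are handled correctly; the reduction to each finite level $DR_\mathfrak f$ is legitimate; and the two main ideas --- replace $\rho$ by a totally positive global element $a$ with $a\equiv\rho\pmod{\mathfrak f}$ and prescribed valuations at $\mathfrak p\mid\mathfrak f$, so that $\iota(\rho)_\mathfrak f=[(a)]_\mathfrak f$, and then use $[a]=1$ to rewrite $[\rho]^{-1}=[a\rho^{-1}]$ and appeal to the compatibility of the idelic and ideal-theoretic Artin maps for the conductor $(\mathfrak f/\mathfrak d)(\infty)$ --- are exactly the right ones.

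As a self-contained proof, however, the argument stops short of the decisive step. The identity to be proved is reduced to the claim that $(\rho)[\rho]^{-1}$ lies in the component $C_{\mathfrak f/\mathfrak d}$ of the decomposition \eqref{prop2} and is represented there by $(a)\mathfrak d^{-1}$, and this claim is asserted (``one checks'') rather than verified; you yourself locate the entire difficulty --- pinning down the normalization of $[\cdot]_K$ and identifying the correct component and conductor --- inside that unperformed check. Since the exponent $-1$ on $[\rho]$ is precisely what that verification must produce, the argument as written would be equally consistent with the formula $\iota(\rho)=(\rho)[\rho]$; the content of the proposition lives in the step you defer. Given that the paper itself treats the statement as an external citation, deferring this computation to \cite{DelRib} is acceptable, but the proposal should be read as a correct roadmap rather than a complete proof.
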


\section{A classification result of Borger and de Smit}
\label{bds} \noindent
The results in this section are based on the preprint \cite{BdS} of Borger and de Smit. First we will fix again some notation. \\ \\
For a prime ideal $\mathfrak p \in \iok$, we denote by $\kappa(\mathfrak p)$ the finite residue field $\mathcal O _K / \mathfrak p$. The Frobenius endomorphism $\mathrm{Frob}_\mathfrak p$ of a $\kappa(\mathfrak p)$-algebra is defined by $x \mapsto x^{|\kappa(\mathfrak p)|}$. An endomorphism $f$ of a $\ok$-algebra $E$ is called a Frobenius lift (at $\mathfrak p$) if $f \otimes 1$ equals $\mathrm{Frob}_\mathfrak p$ on $E \otimes _{\mathcal O _K}\kappa(\mathfrak p)$. 
\begin{definition}
Let $E$ be a torsion-free $\ok$-algebra. A $\Lambda_K$-structure on $E$ is given by a family of endomorphisms $(f_\mathfrak p)$ indexed by the (non-zero) prime ideals $\mathfrak p$ of $\ok$, such that for all $\mathfrak p, \mathfrak q$, we have \begin{itemize}
\item[1)] $f_\mathfrak p \circ f_\mathfrak q = f_\mathfrak q \circ f_\mathfrak p$,
\item[2)] $f_\mathfrak p$ is a Frobenius lift.
\end{itemize}
\end{definition}

\begin{definition}
A $K$-algebra $E$ is said to have an \textit{integral} $\Lambda_K$-structure if there exists a $\ok$-algebra $\widetilde E$ with $\Lambda_K$-structure and an isomorphism $E \cong \widetilde E \otimes_{\ok} K$. In this case, $\widetilde E$ is called an integral model of $E$.
\end{definition}

\begin{remark}
The Frobenius-lift property is vacuous for $K$-algebras. This is why we need to ask for an integral structure.
\end{remark} \noindent
In \cite{BdS}, Borger and de Smit were able to classify finite, \'etale $K$-algebras with integral $\Lambda_K$-structure. Their result can be described as an arithmetic refinement of the classical \textit{Grothendieck-Galois correspondence}, which says that the category $\mathfrak E_K$ of finite, \'etale $K$-algebras is antiequivalent to the category $\mathfrak {S}_{G_K}$ of finite sets equipped with a continuous action of the absolute Galois group $G_K = \mathrm{Gal}(\overline K / K)$\footnote{The morphisms are given by $K$-algebra homomorphisms resp.  $G_K$-equivariant maps of sets.}. The equivalence is induced by the contravariant functor $A \mapsto {Hom}_{K\text{-}\mathrm{alg}}(A,\overline K)$. \\ 
The first observation is that giving a $\Lambda_K$-structure to a finite, \'etale $K$-algebra $E$ is the same as giving a monoid map\footnote{Recall that $\iok$ is generated as a (multiplicative) monoid by its (non-zero) prime ideals.} \eqst{\iok \to \mathrm{End}_{G_K}({Hom}_{K\text{-}\mathrm{alg}}(E,\overline K))\, ,} so that we end up with an action of the direct product $\iok \times G_K$ on $\mathrm{Hom}_{K\text{-}\mathrm{alg}}(E,\overline K)$. \\
Asking for an integral model of $E$ is much more delicate and is answered beautifuly in \cite{BdS} by making extensive use of class field theory as follows.

\begin{theorem}[\cite{BdS} Theorem 1.2] 
Let $E$ be a finite, \'etale $K$-algebra with $\Lambda_K$-structure. Then $E$ has an integral model if and only if there is an integral ideal $\mathfrak f \in \iok$ such that the action of $\iok \times G_K$ on $\mathrm{Hom}_{K\text{-}\mathrm{alg}}(E,\overline K)$ factors (necessarily uniquely) through the map $\iok \times \kab \longrightarrow DR_\mathfrak f$ given by the natural projection on the first factor and by the Artin reciprocity map\footnote{$G_K \to G_k^{ab}\to C_\mathfrak f \subset DR_\mathfrak f$\,.} on the second factor.
\end{theorem}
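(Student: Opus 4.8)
The plan is to prove this by combining the classical Grothendieck–Galois correspondence with the structural results about $\mathrm{DR}_K$ recalled in the previous section, reducing the ``integral model exists'' condition to a factorization statement about the $\iok \times G_K$-action. First I would set $X = \mathrm{Hom}_{K\text{-alg}}(E,\overline K)$, a finite set carrying a continuous $G_K$-action (since $E$ is finite \'etale) together with a commuting action of $\iok$ by endomorphisms coming from the $\Lambda_K$-structure, as explained just above the statement. The content to establish is: $E$ has an integral model $\widetilde E$ if and only if the $\iok \times G_K$-action on $X$ factors through some $DR_\mathfrak f$ via projection on the first factor and Artin reciprocity $G_K \to G_K^{ab} \to C_\mathfrak f \subset DR_\mathfrak f$ on the second.

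For the ``only if'' direction I would start from an integral model $\widetilde E$ with its family of Frobenius lifts $(f_\mathfrak p)$. The key point is that $\widetilde E$, being a finite \'etale $\ok[1/\mathfrak m]$-algebra for a suitable conductor-type ideal $\mathfrak m$ (away from which it is \'etale and unramified), has the property that each $f_\mathfrak p$ acts on the unramified primes as the geometric Frobenius, which by class field theory is exactly the image of $\mathfrak p$ under $G_K^{ab} \to C_\mathfrak f$ for any modulus $\mathfrak f$ supported appropriately. The commutativity and the Frobenius-lift conditions force the $\iok$-action to be compatible with the $G_K$-action in precisely the way encoded by the monoid $DR_\mathfrak f$, whose description $DR_\mathfrak f \cong \coprod_{\mathfrak d \mid \mathfrak f} C_{\mathfrak f/\mathfrak d}$ from \eqref{prop2} records how an ideal $\mathfrak a$ with $\gcd(\mathfrak a,\mathfrak f) = \mathfrak d$ acts as the Frobenius class of $\mathfrak a/\mathfrak d$ in the smaller ray class group. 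Here I would lean essentially on the deformation-theoretic heart of \cite{BdS}: the existence of an integral model is equivalent to finding, compatibly for all $\mathfrak p$, such commuting Frobenius lifts, and the obstruction to doing so is controlled by ramification, which is bounded by a single modulus $\mathfrak f$ exactly when the action factors through $DR_\mathfrak f$.

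For the ``if'' direction, suppose the $\iok \times G_K$-action on $X$ factors through $DR_\mathfrak f$. Then I would build $\widetilde E$ explicitly: decompose $X$ into $DR_\mathfrak f$-orbits, and for each orbit identify it (via the orbit–stabilizer correspondence and \eqref{prop1}–\eqref{prop2}) with a quotient of some ray class group $C_{\mathfrak f/\mathfrak d}$, hence with $\mathrm{Gal}(L/K)$-cosets for $L$ an intermediate field of the ray class field $K_\mathfrak f / K$. The ring of $S$-integers in such an $L$ (with $S$ the primes dividing $\mathfrak f$) carries canonical Frobenius lifts at all $\mathfrak p \nmid \mathfrak f$ because the extension is abelian and unramified outside $\mathfrak f$, and at $\mathfrak p \mid \mathfrak f$ the prescribed endomorphism is the idempotent-truncation dictated by the $DR_\mathfrak f$-structure; assembling these over all orbits gives a torsion-free $\ok$-algebra $\widetilde E$ with $\Lambda_K$-structure and $\widetilde E \otimes_{\ok} K \cong E$ by the Grothendieck–Galois correspondence applied to the generic fiber.

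The main obstacle I expect is the ``only if'' direction, specifically showing that a \emph{single} modulus $\mathfrak f$ suffices — i.e., that the ramification of the integral model is globally bounded rather than merely $\mathfrak p$-by-$\mathfrak p$ controlled. This is where the real arithmetic input of \cite{BdS} lies: one must argue that a torsion-free $\ok$-algebra admitting a full commuting system of Frobenius lifts is forced to be, up to the idempotent subtleties, a product of orders in ray class fields (a ``$\Lambda_K$-rigidity'' phenomenon), which ultimately rests on the finiteness of the class group and the structure of $\widehat{\ok}^\times$. In the writeup I would cite \cite{BdS} Theorem 1.2 directly for this implication and focus the detailed verification on the more elementary ``if'' construction, checking that the Frobenius-lift and commutativity axioms hold on the explicitly built $\widetilde E$ and that base change to $K$ recovers $E$ with its given $\Lambda_K$-structure.
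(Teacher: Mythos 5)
This statement is imported verbatim from Borger--de Smit (\cite{BdS}, Theorem 1.2) and the paper offers no proof of it at all, so the only ``approach'' to compare against is the citation itself --- which is exactly what your writeup falls back on for the hard (``only if'') direction. Your plan is therefore consistent with the paper's treatment; just be aware that the extra sketch you give for the ``if'' direction is heuristic and would need real care if you tried to carry it out (in particular, the $\iok$-action by ideals dividing $\mathfrak f$ is not invertible and moves points between the components $C_{\mathfrak f/\mathfrak d}$ of $DR_\mathfrak f$, so the integral model cannot simply be assembled orbit-by-orbit as a product of $S$-integer rings with independently chosen Frobenius lifts), but since the paper uses the result as a black box, citing \cite{BdS} is sufficient here.
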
 \noindent
In particular one obtains the following arithmetic refinement of the classical Grothendieck-Galois correspondence.
\begin{corollary}[\cite{BdS}]
\label{ilggc}
The functor $\mathfrak H_K : E \mapsto \mathrm{Hom}_{K\text{-}\mathrm{alg}}(E,\overline K)$ induces a contravariant equivalence \eq{\label{functorBdS}\mathfrak H _K : \mathfrak E _{\Lambda,K} \longrightarrow \mathfrak S _{\mathrm{DR}_K}} between the category $\mathfrak E_{\Lambda,K}$ of finite, \'etale $K$-algebras with integral $\Lambda_K$-structure and the category $\mathfrak S_{\mathrm{DR}_K}$ of finite sets equipped with a continuous action of the Deligne-Ribet monoid $\mathrm{DR}_K$\footnote{The morphisms are given by $K$-algebra homomorphisms respecting the integral $\Lambda_K$-structure resp.\ by $\mathrm{DR}_K$-equivariant maps of finite sets.}. 
\end{corollary}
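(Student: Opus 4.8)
The plan is to bootstrap the classical Grothendieck-Galois correspondence $\mathfrak{H}_K^0 : \mathfrak{E}_K \to \mathfrak{S}_{G_K}$ by tracking the extra structure present on each side, and then to cut down to the relevant subcategories by means of \cite{BdS} Theorem 1.2. Since the Frobenius-lift condition is vacuous over $K$, an object of $\mathfrak{E}_{\Lambda,K}$ amounts to a finite, \'etale $K$-algebra $E$ together with a monoid homomorphism $\iok \to \mathrm{End}_{K\text{-}\mathrm{alg}}(E)$ (equivalently a commuting family $(f_\mathfrak{p})$ of $K$-algebra endomorphisms, $\iok$ being the free commutative monoid on its prime ideals) whose associated action admits an integral model; morphisms are the $K$-algebra homomorphisms intertwining these families. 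As noted in the text, applying the fully faithful contravariant functor $\mathfrak{H}_K^0$ to endomorphisms identifies $\mathrm{End}_{K\text{-}\mathrm{alg}}(E)$ with $\mathrm{End}_{G_K}(\mathfrak{H}_K^0(E))$ (up to the opposite monoid, which is immaterial as $\iok$ is commutative, and because post-composition by Galois automorphisms of $\overline K$ automatically commutes with pre-composition by endomorphisms of $E$). Hence a $\Lambda_K$-structure on $E$ is the same datum as an action of $\iok$ on the finite $G_K$-set $\mathfrak{H}_K^0(E)$ commuting with the $G_K$-action, and $\mathfrak{H}_K^0$ upgrades to a contravariant equivalence between finite \'etale $K$-algebras with a commuting family of endomorphisms and finite sets carrying a continuous action of $\iok \times G_K$ (continuity being a condition only on the $G_K$-factor).

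Next I would feed in \cite{BdS} Theorem 1.2: such an $E$ has an integral model precisely when the corresponding $\iok \times G_K$-action on $\mathfrak{H}_K^0(E)$ factors (uniquely) through the surjection $\iok \times \kab \to DR_\mathfrak{f}$ of that theorem, for some $\mathfrak{f} \in \iok$. Thus $\mathfrak{H}_K^0$ restricts to a contravariant equivalence between $\mathfrak{E}_{\Lambda,K}$ and the full subcategory of finite $\iok \times G_K$-sets whose action factors in this way, and it remains to identify this subcategory with $\mathfrak{S}_{\mathrm{DR}_K}$. For this I would use that $\mathrm{DR}_K = \varprojlim_\mathfrak{f} DR_\mathfrak{f}$ is a profinite commutative monoid with surjective transition maps, so that any continuous action on a finite discrete set factors through a projection $\mathrm{DR}_K \to DR_\mathfrak{f}$, while conversely any $DR_\mathfrak{f}$-action pulls back to a continuous $\mathrm{DR}_K$-action. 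Given a continuous $\mathrm{DR}_K$-action on a finite set $X$, restricting along $\iok \to \mathrm{DR}_K$, and along the identification $\kab \cong \mathrm{DR}_K^\times$ of \eqref{prop3} precomposed with Artin reciprocity $G_K \to \kab$, yields (since $\mathrm{DR}_K$ is commutative) a commuting pair of $\iok$- and $G_K$-actions factoring through some $DR_\mathfrak{f}$; and because $\iok \to DR_\mathfrak{f}$ is surjective ($DR_\mathfrak{f}$ being by definition a quotient of $\iok$), the $DR_\mathfrak{f}$-action, hence the $\mathrm{DR}_K$-action, is determined by the underlying $\iok$-action, so this restriction functor is inverse to the pull-back functor. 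One checks along the way that the $G_K$-action produced this way is exactly the composite $G_K \to C_\mathfrak{f} = DR_\mathfrak{f}^\times$ appearing in \cite{BdS}.

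Finally I would match the morphism sides: a $K$-algebra homomorphism respecting the families $(f_\mathfrak{p})$ corresponds under $\mathfrak{H}_K^0$ to a map of sets that is simultaneously $G_K$-equivariant and $\iok$-equivariant, hence $\iok \times G_K$-equivariant; choosing a level $\mathfrak{f}$ through which both the source and the target actions factor, this is the same as a $DR_\mathfrak{f}$-equivariant, and therefore a $\mathrm{DR}_K$-equivariant, map. Assembling these identifications, $\mathfrak{H}_K : \mathfrak{E}_{\Lambda,K} \to \mathfrak{S}_{\mathrm{DR}_K}$ is a well-defined contravariant functor that is fully faithful and essentially surjective, i.e., an equivalence. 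I expect the only genuinely delicate point to be the bookkeeping of the middle paragraph: passing coherently between "the $\iok \times G_K$-action factors through $DR_\mathfrak{f}$" and "continuous $\mathrm{DR}_K$-action", making sure the $G_K$-action is recovered through $\mathrm{DR}_K^\times$ via reciprocity and that morphisms descend at a common finite level; everything else is formal once the classical correspondence and \cite{BdS} Theorem 1.2 are in hand.
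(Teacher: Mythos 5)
Your proposal is correct and follows exactly the route the paper intends: the paper offers no proof of this corollary beyond the preceding observation (that a $\Lambda_K$-structure is an $\iok\times G_K$-action on $\mathrm{Hom}_{K\text{-}\mathrm{alg}}(E,\overline K)$ commuting with Galois) together with the citation of \cite{BdS} Theorem 1.2, and your write-up is a careful elaboration of precisely that deduction. The points you flag as delicate (factoring continuous $\mathrm{DR}_K$-actions on finite sets through some $DR_{\mathfrak f}$, recovering the $DR_{\mathfrak f}$-action from the $\iok$-action via surjectivity of $\iok\to DR_{\mathfrak f}$, and descending morphisms to a common level) are handled correctly.
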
 \noindent
Note that we will use the same notation $\mathfrak H _K$ to denote the induced functor \eq{\label{profunctor} 
\mathfrak E_{ind\text{-}\Lambda,K} \longrightarrow \mathfrak S_{pro\text{-}\mathrm{DR}_K}} from the category of inductive systems in $\mathfrak E_{\Lambda,K}$ to projective systems in $\mathfrak S_{\mathrm{DR}_K}$.

\section{A simple decomposition of the Deligne-Ribet monoid}
\label{projsec}
\noindent
In this section, we describe an observation on the Deligne-Ribet monoid that will be used later on. First, notice (see (2.5) \cite{DelRib}) that for ideals $\mathfrak a, \mathfrak b$ and $\mathfrak d$ in $\iok$ we have the simple fact \eq{\label{deltacalculus}\mathfrak a \sim_\mathfrak f \mathfrak b \Leftrightarrow \mathfrak d \mathfrak a \sim_{\mathfrak d \mathfrak f} \mathfrak d \mathfrak b\,.}
This allows us to define a $\mathrm{DR}_K$-equivariant embedding \eqst{ \mathfrak d \cdot : DR_\mathfrak f \hookrightarrow DR_{\mathfrak d \mathfrak f} \ ; \ \mathfrak a \mapsto \mathfrak d \mathfrak a\, ,} and we can identify $DR_\mathfrak f$ with its image $\mathfrak d DR_{\mathfrak d \mathfrak f}$. 
Now taking projective limits, we obtain an injective map \eq{\label{delta}\varrho_\mathfrak d : \mathrm{DR}_K \to \mathrm{DR}_K} defined by \diagramnr{sequofmaps}{\varprojlim_\mathfrak f DR_\mathfrak f \ar[r]^{\cong}_{\mathfrak d \cdot} &\varprojlim_\mathfrak f \mathfrak d DR_{\mathfrak d \mathfrak f}\ar[r]_{inc} & \varprojlim_\mathfrak f DR_{\mathfrak d \mathfrak f}\ar[r]^{\cong} & \varprojlim_\mathfrak f DR_\mathfrak f, } which is in fact just a complicated way of writing the multiplication map
\eqst{\mathfrak a \in \mathrm{DR}_K \longmapsto \mathfrak d \mathfrak a \in \mathrm{DR}_K\,.}
We profit from our reformulation in that we see immediately that the image $\mathrm{Im}(\varrho_\mathfrak d) = \varprojlim_\mathfrak f \mathfrak d DR_{\mathfrak d \mathfrak f}$ is a closed subset of $\mathrm{DR}_K$. Also, using (\ref{deltacalculus}), we see that the complement of $\mathrm{Im}(\varrho_\mathfrak d)$ in $\mathrm{DR}_K$ is closed, and therefore we obtain, for every $\mathfrak d \in \iok$, a (topological) decomposition \eq{\label{decomposition} \mathrm{DR}_K = \mathrm{Im}(\varrho_\mathfrak d) \sqcup \mathrm{Im}(\varrho_\mathfrak d)^c\,.}

\section{The endomotive $\mathcal E _K$}
\label{E_K} \noindent
For every number field $K$, we want to construct an algebraic endomotive $\mathcal E _K$. \\ 
The main tool for this purpose is provided by the refined Grothendieck-Galois correspondence (see \ref{ilggc}) between finite, \'etale $K$-algebras with integral $\Lambda_K$-structure and finite sets with a continuous action of the Deligne-Ribet monoid $\mathrm{DR}_K$. \\
We observe that $\mathrm{DR}_K$ is acting continuously on the finite monoid $DR_\mathfrak f$, for every $\mathfrak f \in \iok$. Therefore, we know, by correspondence \ref{ilggc}, that there exists a finite, \'etale $K$-algebra $E_\mathfrak f$ with integral $\Lambda_K$-structure, such that \eqst{DR_\mathfrak f \cong \mathrm{Hom}_{K\text{-}\mathrm{alg}}(E_\mathfrak f,\overline K)\,.}
Further, using the decomposition $DR_\mathfrak f \cong \coprod_{\mathfrak d \mid \mathfrak f }C_{\mathfrak f / \mathfrak d}$ (cf., \eqref{prop2}), where $C_\mathfrak d \cong \mathrm{Gal}(K_\mathfrak d / K) = \mathrm{Hom}_{K\text{-}\mathrm{alg}}(K_\mathfrak d,\overline K)$, we see, again by invoking our correspondence \ref{ilggc}, that we have the following description of $E_\mathfrak f$. 
\begin{lemma}
For every  $\mathfrak f \in \iok$, we have an isomorphism of finite, \'etale $K$-algebras with integral $\Lambda_K$-structure \eq{\label{arithalg}E_\mathfrak f \cong \prod _{\mathfrak d \mid \mathfrak f} K_{\mathfrak f / \mathfrak d}\, ,} where $K_\mathfrak d \subset K^{ab}$ denotes the strict ray class field (of conductor $\mathfrak d$) in the maximal abelian extension $K^{ab}$ of $K$. In particular, we see that \eq{\label{abelian} DR_\mathfrak f \cong \mathrm{Hom}_{K\text{-}\mathrm{alg}}(E_\mathfrak f,K^{ab})\,.}
\end{lemma}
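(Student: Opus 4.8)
The plan is to read off the claimed isomorphism directly from the refined Grothendieck–Galois correspondence of Corollary~\ref{ilggc} together with the structural description \eqref{prop2} of $DR_\mathfrak f$. First I would recall that, by construction of $E_\mathfrak f$ via Corollary~\ref{ilggc}, we have a $\mathrm{DR}_K$-equivariant bijection $DR_\mathfrak f\cong\mathrm{Hom}_{K\text{-}\mathrm{alg}}(E_\mathfrak f,\overline K)$, and that the correspondence $\mathfrak H_K$ turns disjoint unions of finite $\mathrm{DR}_K$-sets into finite products of $K$-algebras (the functor being a contravariant equivalence, it sends coproducts in $\mathfrak S_{\mathrm{DR}_K}$ to products in $\mathfrak E_{\Lambda,K}$). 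Hence from \eqref{prop2}, $DR_\mathfrak f\cong\coprod_{\mathfrak d\mid\mathfrak f}C_{\mathfrak f/\mathfrak d}$, I would obtain $E_\mathfrak f\cong\prod_{\mathfrak d\mid\mathfrak f}E(C_{\mathfrak f/\mathfrak d})$, where $E(C_{\mathfrak f/\mathfrak d})$ is the finite \'etale $K$-algebra with integral $\Lambda_K$-structure corresponding under $\mathfrak H_K$ to the transitive $\mathrm{DR}_K$-set $C_{\mathfrak f/\mathfrak d}$.

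The heart of the argument is then to identify each factor $E(C_{\mathfrak f/\mathfrak d})$ with the strict ray class field $K_{\mathfrak f/\mathfrak d}$. For this I would use \eqref{prop1}, which says $C_\mathfrak e=DR_\mathfrak e^\times$ can be identified with $\mathrm{Gal}(K_\mathfrak e/K)$ via class field theory, so that as a $\mathrm{DR}_K$-set, $C_{\mathfrak f/\mathfrak d}$ is the coset space $\mathrm{DR}_K^\times/(\text{stabilizer})$ corresponding under \eqref{prop3} to $\mathrm{Gal}(K^{ab}/K)/\mathrm{Gal}(K^{ab}/K_{\mathfrak f/\mathfrak d})=\mathrm{Gal}(K_{\mathfrak f/\mathfrak d}/K)$, with the $\mathrm{DR}_K$-action factoring through $DR_{\mathfrak f/\mathfrak d}$ and in fact through $C_{\mathfrak f/\mathfrak d}$ by left translation. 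One checks that this is exactly the $\mathrm{DR}_K$-set $\mathrm{Hom}_{K\text{-}\mathrm{alg}}(K_{\mathfrak f/\mathfrak d},\overline K)$ attached to the field $K_{\mathfrak f/\mathfrak d}$, and that $K_{\mathfrak f/\mathfrak d}$ carries a (necessarily unique) integral $\Lambda_K$-structure for which the conductor in Theorem~\ref{ilggc}'s sense is $\mathfrak f/\mathfrak d$ — indeed the standard $\Lambda_K$-structure on a ray class field is given by the Frobenius substitutions $\mathfrak p\mapsto\mathrm{Frob}_\mathfrak p$, which under the reciprocity identification $C_{\mathfrak f/\mathfrak d}\cong\mathrm{Gal}(K_{\mathfrak f/\mathfrak d}/K)$ is precisely multiplication by the ideal class $[\mathfrak p]$. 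Applying the equivalence $\mathfrak H_K$ (which is fully faithful, hence reflects isomorphisms) gives $E(C_{\mathfrak f/\mathfrak d})\cong K_{\mathfrak f/\mathfrak d}$ as $K$-algebras with integral $\Lambda_K$-structure, and reassembling the product yields \eqref{arithalg}. Finally \eqref{abelian} follows because each $K_{\mathfrak f/\mathfrak d}$ lies in $K^{ab}$, so every $K$-algebra homomorphism $E_\mathfrak f\to\overline K$ already lands in $K^{ab}$.

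The step I expect to be the main obstacle is the precise matching of the integral $\Lambda_K$-structures: one must verify that the $\mathrm{DR}_K$-set $C_{\mathfrak f/\mathfrak d}$ produced by the decomposition \eqref{prop2} (with its action by ideal multiplication, so that a prime $\mathfrak p$ acts as $[\mathfrak p]_{\mathfrak f/\mathfrak d}$) corresponds under $\mathfrak H_K$ to the ray class field $K_{\mathfrak f/\mathfrak d}$ equipped with exactly the Frobenius-lift family that realizes it as an object of $\mathfrak E_{\Lambda,K}$, and that the conductor bookkeeping is consistent — i.e.\ that the action factors through $DR_{\mathfrak f/\mathfrak d}$ and not through a smaller $DR_\mathfrak e$. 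This is essentially a compatibility check between Artin reciprocity, the Deligne–Ribet projection maps \eqref{projmaps}, and the characterization of $\Lambda_K$-structures in Theorem~\ref{ilggc}; once it is in place, the rest is formal manipulation with the contravariant equivalence. I would also note explicitly that the divisors $\mathfrak d\mid\mathfrak f$ index the embedding $\mathfrak d\cdot\colon DR_{\mathfrak f/\mathfrak d}\hookrightarrow DR_\mathfrak f$ from \eqref{deltacalculus}, so that the stratification \eqref{prop2} is $\mathrm{DR}_K$-equivariant on the nose, which is what legitimizes turning it into an algebra decomposition.
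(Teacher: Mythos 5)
Your approach is the paper's: the lemma is obtained by feeding the decomposition \eqref{prop2} into the correspondence of Corollary \ref{ilggc} and identifying each stratum $C_{\mathfrak f/\mathfrak d}\cong\mathrm{Gal}(K_{\mathfrak f/\mathfrak d}/K)=\mathrm{Hom}_{K\text{-alg}}(K_{\mathfrak f/\mathfrak d},\overline K)$, and the paper records no further detail. So the route and the conclusion are right.

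One point in your write-up is stated incorrectly, though, and it is exactly the point you single out as ``the main obstacle.'' The stratification $DR_\mathfrak f=\coprod_{\mathfrak d\mid\mathfrak f}\mathfrak d\,C_{\mathfrak f/\mathfrak d}$ is \emph{not} $\mathrm{DR}_K$-equivariant on the nose: multiplication by a prime $\mathfrak p\mid\mathfrak f$ moves elements into a deeper stratum (already for $K=\q$, $\mathfrak f=(p)$, the Frobenius lift at $p$ sends the stratum $(\ganz/p)^\times$ into $\{0\}$). Consequently a single stratum $C_{\mathfrak f/\mathfrak d}$ is not an object of $\mathfrak S_{\mathrm{DR}_K}$, and $K_{\mathfrak f/\mathfrak d}$ does not individually carry the $\Lambda_K$-structure you describe (there is no ``multiplication by $[\mathfrak p]$'' staying inside $C_{\mathfrak f/\mathfrak d}$ for ramified $\mathfrak p$); the Frobenius lifts on $E_\mathfrak f$ genuinely mix the factors of the product. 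What saves the argument, and what you should say instead, is that the stratification \emph{is} equivariant for the group of units $\mathrm{DR}_K^\times\cong\kab$, i.e.\ for the $G_K$-action, since a unit $u\in C_\mathfrak f$ sends $\mathfrak a\mathfrak d$ to $(u\mathfrak a)\mathfrak d$ with $u\mathfrak a\in C_{\mathfrak f/\mathfrak d}$. The product decomposition of the underlying \'etale $K$-algebra only needs the $G_K$-equivariant orbit decomposition (ordinary Grothendieck--Galois), after which the integral $\Lambda_K$-structure is carried by $E_\mathfrak f$ as a whole rather than factorwise; this is how \eqref{arithalg} should be read. Your derivation of \eqref{abelian} from the fact that each $K_{\mathfrak f/\mathfrak d}$ lies in $K^{ab}$ is fine.
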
\noindent
Further, the projection maps $\pi_{\mathfrak f, \mathfrak f'} : DR_{\mathfrak f'} \to DR_\mathfrak f$, for every $\mathfrak f \mid \mathfrak f'$, are equivariant with respect to the action of $\mathrm{DR}_K$. Therefore, we obtain, again by making use of our correspondence \ref{ilggc}, an inductive system $(E_\mathfrak f)_{\mathfrak f \in \iok}$ of finite, \'etale $K$-algebras with integral $\Lambda_K$-structure, with transition maps \eq{\label{transmaps}\xi_{\mathfrak f,\mathfrak f '} : E_\mathfrak f \to E_ {\mathfrak f'}} defined by \eqst{\xi_{\mathfrak f,\mathfrak f'} = \mathfrak H_K^{-1}(\pi_{\mathfrak f,\mathfrak f'})\,.}
\begin{definition}\label{defek}
We define the unital and commutative $K$-algebra $E_K$ to be the inductive limit of the system $(E_\mathfrak f)_{\mathfrak f \in \iok}$, i.e., we have \eqst{E_K = \varinjlim_\mathfrak f E_\mathfrak f\,.}
\end{definition}\noindent
If we denote the action of $\mathfrak d \in I_K$ on $E_\mathfrak f$ (coming from its integral $\Lambda_K$-structure) in terms of endomorphisms $f_\mathfrak d^\mathfrak f : E_\mathfrak f \to E_\mathfrak f$, we have, for all $\mathfrak f, \mathfrak f', \mathfrak d \in \iok$ with $\mathfrak f \mid \mathfrak f'$, the following compatibility relation: \diagramnr{diagcomp}{E_\mathfrak f \ar[r]^{\xi_{\mathfrak f,\mathfrak f'}} \ar[d]^{f_\mathfrak d^\mathfrak f} &E_{\mathfrak f '} \ar[d]^{f_\mathfrak d^{\mathfrak f '}} \\ E_\mathfrak f \ar[r]^{\xi_{\mathfrak f,\mathfrak f'}} & E_{\mathfrak f '}\,.}
In particular, this means that we obtain a natural action of $\iok$ on the inductive limit $E_K$, given, for every $\mathfrak d \in \iok,$ by the $K$-algebra endomorphism $\sigma_\mathfrak d$, defined by \eqst{\sigma_\mathfrak d = \varinjlim _\mathfrak f f_\mathfrak d ^\mathfrak f : E_K \to E_K\,.} Recall that $\varrho_\mathfrak d : \mathrm{DR}_K \to \mathrm{DR}_K$ is defined to be the injective multiplication-by-$\mathfrak d$ map (see \eqref{delta}). 
\begin{lemma}
For every $\mathfrak d \in \iok$, we have the equality \eqst{\varrho_\mathfrak d = \mathfrak H _K(\sigma_\mathfrak d)\, ,} i.e., under the contravariant equivalence of categories (cf., corollary \ref{ilggc}), the two maps $\sigma_\mathfrak d$ and $\varrho_\mathfrak d$ correspond to each other. 
\end{lemma}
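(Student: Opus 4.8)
The plan is to reduce the asserted identity, by applying the contravariant equivalence $\mathfrak{H}_K$ levelwise, to the elementary description of $\varrho_\mathfrak{d}$ as multiplication by $\mathfrak{d}$ on $\mathrm{DR}_K$ recorded after \eqref{sequofmaps}. The single point that needs to be made explicit first is this: under the equivalence of Corollary \ref{ilggc}, the endomorphism of the finite set $DR_\mathfrak{f} = \mathrm{Hom}_{K\text{-}\mathrm{alg}}(E_\mathfrak{f},\overline K)$ attached to $\mathfrak{d}\in\iok$ by the integral $\Lambda_K$-structure of $E_\mathfrak{f}$ — i.e. the map $\mathfrak{H}_K(f_\mathfrak{d}^\mathfrak{f})$ — is left multiplication by the image $[\mathfrak{d}]_\mathfrak{f}\in DR_\mathfrak{f}$ of $\mathfrak{d}$, that is $[\mathfrak{a}]_\mathfrak{f}\mapsto[\mathfrak{d}\mathfrak{a}]_\mathfrak{f}$. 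This I would read off the construction of Section \ref{E_K}: the $\mathrm{DR}_K$-set $DR_\mathfrak{f}$ carries the action obtained from the monoid multiplication of $\mathrm{DR}_K$ via the canonical projection $\mathrm{DR}_K\twoheadrightarrow DR_\mathfrak{f}$, and the integral $\Lambda_K$-structure on $E_\mathfrak{f}=\mathfrak{H}_K^{-1}(DR_\mathfrak{f})$ is, by construction, the one corresponding to the restriction of this action along $\iok\to\mathrm{DR}_K$; equivalently, by \cite{BdS}~Theorem~1.2 the $\iok\times G_K$-action on $\mathrm{Hom}_{K\text{-}\mathrm{alg}}(E_\mathfrak{f},\overline K)$ factors through $\iok\times\kab\to DR_\mathfrak{f}$ (natural projection $\iok\to DR_\mathfrak{f}$ on the first factor, Artin reciprocity on the second) followed by multiplication in $DR_\mathfrak{f}$, so that $\mathfrak{d}\in\iok$, with trivial Galois part, acts as multiplication by $[\mathfrak{d}]_\mathfrak{f}$.

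Granting this, I would apply the contravariant functor $\mathfrak{H}_K$ to the commuting square \eqref{diagcomp}. Since $\mathfrak{H}_K(\xi_{\mathfrak{f},\mathfrak{f}'})=\pi_{\mathfrak{f},\mathfrak{f}'}$ by \eqref{transmaps}, this gives $\mathfrak{H}_K(f_\mathfrak{d}^\mathfrak{f})\circ\pi_{\mathfrak{f},\mathfrak{f}'}=\pi_{\mathfrak{f},\mathfrak{f}'}\circ\mathfrak{H}_K(f_\mathfrak{d}^{\mathfrak{f}'})$ for all $\mathfrak{f}\mid\mathfrak{f}'$ — equivalently, the $\mathrm{DR}_K$-equivariance of multiplication by $\mathfrak{d}$ already used in Section \ref{projsec}. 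Hence the family $(\mathfrak{H}_K(f_\mathfrak{d}^\mathfrak{f}))_\mathfrak{f}$ is a morphism of the projective system $(DR_\mathfrak{f})_\mathfrak{f}$, and by the definition of the induced functor \eqref{profunctor} on ind-/pro-objects one gets $\mathfrak{H}_K(\sigma_\mathfrak{d})=\mathfrak{H}_K(\varinjlim_\mathfrak{f}f_\mathfrak{d}^\mathfrak{f})=\varprojlim_\mathfrak{f}\mathfrak{H}_K(f_\mathfrak{d}^\mathfrak{f})$ as a self-map of $\mathcal{X}=\varprojlim_\mathfrak{f}DR_\mathfrak{f}=\mathrm{DR}_K$.

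It would then remain only to identify this limit. On a point $([\mathfrak{a}]_\mathfrak{f})_\mathfrak{f}\in\varprojlim_\mathfrak{f}DR_\mathfrak{f}$ the map $\varprojlim_\mathfrak{f}\mathfrak{H}_K(f_\mathfrak{d}^\mathfrak{f})$ takes the value $([\mathfrak{d}\mathfrak{a}]_\mathfrak{f})_\mathfrak{f}$, i.e. it is multiplication by $\mathfrak{d}$ in the monoid $\mathrm{DR}_K$, which is exactly $\varrho_\mathfrak{d}$ by the description given after \eqref{sequofmaps}; hence $\mathfrak{H}_K(\sigma_\mathfrak{d})=\varrho_\mathfrak{d}$. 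I expect the only genuinely delicate step to be the first paragraph — correctly tracing the integral $\Lambda_K$-structure of $E_\mathfrak{f}$ through $\mathfrak{H}_K$ to the multiplication map on $DR_\mathfrak{f}$, with no Artin twist and no reversal of direction; the left/right ambiguity in "multiplication" is harmless since $DR_\mathfrak{f}$ and $\mathrm{DR}_K$ are commutative, and everything after it is formal manipulation of the (contravariant, ind-/pro-) equivalence $\mathfrak{H}_K$.
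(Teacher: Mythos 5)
Your proposal is correct and follows essentially the same route as the paper: the paper's (one-sentence) proof likewise consists of applying $\mathfrak H_K$ to the square \eqref{diagcomp} together with the observation that the integral $\Lambda_K$-structure on $E_\mathfrak f$ was, by construction, the one corresponding under Corollary \ref{ilggc} to the multiplication action of $\mathrm{DR}_K$ (hence of $\iok$) on $DR_\mathfrak f$. You have merely made explicit the levelwise identification and the passage to the pro-limit, which the paper leaves implicit.
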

\begin{proof}
This follows by applying the functor $\mathfrak H _K$ to the diagram \eqref{diagcomp} and the fact that the $\Lambda_K$-structure of $E_\mathfrak f$ does come from action of $DR_K$ on $DR_\mathfrak f$ given by multiplication.
\end{proof}\noindent
Now, by making use of the decomposition $\mathrm{DR}_K = \mathrm{Im}(\varrho_\mathfrak d) \sqcup \mathrm{Im}(\varrho_\mathfrak d)^c$ (see \eqref{decomposition}), we conclude the following decompositions of our algebra $E_K$.
\begin{lemma} For every $\mathfrak d \in \iok$, there exists a projection\footnote{This means $\pi_\mathfrak d ^2 = \pi_\mathfrak d$\,.} $\pi_\mathfrak d \in E_K$, such that \eqst{\mathrm{Im}(\varrho_\mathfrak d) \cong \mathrm{Hom}_{K\text{-}\mathrm{alg}}(\pi_\mathfrak d E_K,\overline K)}
and 
\eqst{\mathrm{Im}(\varrho_\mathfrak d)^c \cong \mathrm{Hom}_{K\text{-}\mathrm{alg}}((1-\pi_\mathfrak d) E_K,\overline K)\,.} In particular, for every $\mathfrak d \in \iok$, we obtain the following decomposition \eqst{E_K = \pi_\mathfrak d E_K \oplus (1-\pi_\mathfrak d)E_K\,.}
\end{lemma}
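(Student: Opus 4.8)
The plan is to transport the topological decomposition $\mathrm{DR}_K = \mathrm{Im}(\varrho_\mathfrak d) \sqcup \mathrm{Im}(\varrho_\mathfrak d)^c$ of \eqref{decomposition} across the Grothendieck--Galois correspondence of Corollary \ref{ilggc} (in its ind/pro version \eqref{profunctor}), which identifies $E_K = \varinjlim_\mathfrak f E_\mathfrak f$ with the pro-finite $\mathrm{DR}_K$-set $\mathrm{DR}_K = \varprojlim_\mathfrak f DR_\mathfrak f$ via $\mathcal X = \mathrm{Hom}_{K\text{-}\mathrm{alg}}(E_K,\overline K) \cong \mathrm{DR}_K$. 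The first step is to observe that both pieces of this decomposition are invariant under the action of $G_K = \mathrm{Gal}(\overline K/K)$ on $\mathcal X$. Since by \eqref{arithalg} every $E_\mathfrak f$ is a finite product of ray class fields, this $G_K$-action factors through $\kab \cong \mathrm{DR}_K^\times$ (cf.\ the discussion after \eqref{abelian} together with \eqref{prop3}), and on $\mathrm{DR}_K$ it is simply multiplication by elements of the unit group $\mathrm{DR}_K^\times$. Now $\mathrm{Im}(\varrho_\mathfrak d) = \mathfrak d \cdot \mathrm{DR}_K$ is stable under multiplication by anything in $\mathrm{DR}_K$, in particular by $\mathrm{DR}_K^\times$; and because $\mathrm{DR}_K^\times$ is a \emph{group}, the set-theoretic complement $\mathrm{Im}(\varrho_\mathfrak d)^c$ is stable under it as well. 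Hence $\mathrm{Im}(\varrho_\mathfrak d)$ and $\mathrm{Im}(\varrho_\mathfrak d)^c$ are $G_K$-invariant clopen subsets of $\mathcal X$.

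Next I would descend to a finite level. By \eqref{decomposition} both sets are clopen in the profinite space $\mathrm{DR}_K = \varprojlim_\mathfrak f DR_\mathfrak f$, so by the standard fact that a clopen subset of an inverse limit of finite sets is pulled back from some finite stage, there is an $\mathfrak f_0 \in \iok$ and a subset $W_0 \subseteq DR_{\mathfrak f_0}$ with $\mathrm{Im}(\varrho_\mathfrak d) = p_{\mathfrak f_0}^{-1}(W_0)$, where $p_{\mathfrak f_0} : \mathrm{DR}_K \to DR_{\mathfrak f_0}$ is the canonical projection; in fact one may take $\mathfrak f_0 = \mathfrak d$ and $W_0 = \mathfrak d \, DR_\mathfrak d$. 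Since $p_{\mathfrak f_0}$ is surjective and, by the $\mathrm{DR}_K$-equivariance of the projections $\pi_{\mathfrak f,\mathfrak f'}$, also $G_K$-equivariant, the $G_K$-invariance of $\mathrm{Im}(\varrho_\mathfrak d)$ forces $W_0$ (and likewise $DR_{\mathfrak f_0} \setminus W_0$) to be a $G_K$-invariant subset of $DR_{\mathfrak f_0} = \mathrm{Hom}_{K\text{-}\mathrm{alg}}(E_{\mathfrak f_0},\overline K)$.

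By the finite-level Grothendieck--Galois correspondence, the $G_K$-stable partition $DR_{\mathfrak f_0} = W_0 \sqcup (DR_{\mathfrak f_0}\setminus W_0)$ corresponds to a product decomposition $E_{\mathfrak f_0} = E' \times E''$ with $\mathrm{Hom}_{K\text{-}\mathrm{alg}}(E',\overline K) = W_0$ and $\mathrm{Hom}_{K\text{-}\mathrm{alg}}(E'',\overline K) = DR_{\mathfrak f_0}\setminus W_0$; let $\pi_\mathfrak d \in E_{\mathfrak f_0}$ be the idempotent corresponding to $(1,0)$, so that $E' = \pi_\mathfrak d E_{\mathfrak f_0}$ and $E'' = (1-\pi_\mathfrak d)E_{\mathfrak f_0}$. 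Finally I would push $\pi_\mathfrak d$ forward to $E_K$ along the transition maps $\xi_{\mathfrak f_0,\mathfrak f}$, keeping the name $\pi_\mathfrak d$. As the transition maps are unital $K$-algebra homomorphisms they carry $\pi_\mathfrak d E_\mathfrak f$ into $\pi_\mathfrak d E_{\mathfrak f'}$, so $\pi_\mathfrak d E_K = \varinjlim_{\mathfrak f \geq \mathfrak f_0} \pi_\mathfrak d E_\mathfrak f$ and similarly for $1 - \pi_\mathfrak d$; using $\mathrm{Hom}_{K\text{-}\mathrm{alg}}(\varinjlim_\mathfrak f E_\mathfrak f,\overline K) = \varprojlim_\mathfrak f \mathrm{Hom}_{K\text{-}\mathrm{alg}}(E_\mathfrak f,\overline K)$ and $\mathrm{Hom}_{K\text{-}\mathrm{alg}}(\pi_\mathfrak d E_\mathfrak f,\overline K) = \pi_{\mathfrak f_0,\mathfrak f}^{-1}(W_0)$, one gets $\mathrm{Hom}_{K\text{-}\mathrm{alg}}(\pi_\mathfrak d E_K,\overline K) = p_{\mathfrak f_0}^{-1}(W_0) = \mathrm{Im}(\varrho_\mathfrak d)$ and, symmetrically, $\mathrm{Hom}_{K\text{-}\mathrm{alg}}((1-\pi_\mathfrak d)E_K,\overline K) = \mathrm{Im}(\varrho_\mathfrak d)^c$. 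The relations $\pi_\mathfrak d(1-\pi_\mathfrak d) = 0$ and $\pi_\mathfrak d + (1-\pi_\mathfrak d) = 1$ in $E_K$ (inherited from $E_{\mathfrak f_0}$) then give $E_K = \pi_\mathfrak d E_K \oplus (1-\pi_\mathfrak d)E_K$.

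The one place that genuinely requires care is the $G_K$-invariance of the complement $\mathrm{Im}(\varrho_\mathfrak d)^c$ in the first step: this uses precisely that $G_K$ acts through the \emph{group of units} $\mathrm{DR}_K^\times \cong \kab$ rather than merely through the monoid $\mathrm{DR}_K$, since the complement of a monoid ideal such as $\mathfrak d\,\mathrm{DR}_K$ is in general not preserved by multiplication but is preserved by multiplication by invertibles. Everything else is a formal manipulation of the anti-equivalence \eqref{profunctor} together with the descent of clopen subsets to a finite stage. One may also note, although it is not part of the statement, that $\pi_\mathfrak d E_K$ in fact inherits an integral $\Lambda_K$-structure, because $\mathrm{Im}(\varrho_\mathfrak d) = \mathfrak d\,\mathrm{DR}_K$ is $\mathrm{DR}_K$-stable; the complementary summand $(1-\pi_\mathfrak d)E_K$ need not.
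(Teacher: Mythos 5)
Your proof is correct and takes essentially the same route as the paper's, which simply cites the correspondence of Corollary \ref{ilggc} together with the decomposition \eqref{decomposition} and the elementary fact $\mathrm{Hom}_{K\text{-}\mathrm{alg}}(A\oplus B,\overline K)\cong \mathrm{Hom}_{K\text{-}\mathrm{alg}}(A,\overline K)\sqcup\mathrm{Hom}_{K\text{-}\mathrm{alg}}(B,\overline K)$. You merely fill in the details the paper leaves implicit --- most usefully the point that the complement of $\mathfrak d\cdot\mathrm{DR}_K$ is Galois-stable because $G_K$ acts through the unit group $\mathrm{DR}_K^\times$ rather than through the whole monoid, and the descent of the clopen decomposition to a finite level --- so nothing is missing.
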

\begin{proof} 
This follows directly from the correspondence \ref{ilggc} and the decomposition \eqref{decomposition}. Note that for two unital and commutative $K$-algebras $A$ and $B$, we have the elementary decomposition\footnote{Observe that every homomorphism $f\in \mathrm{Hom}_{K\text{-}\mathrm{alg}}(A\oplus B,\overline K)$ sends necessarily exactly one of the two elements $(1_A,0), (0,1_B) \in A\oplus B$ to zero.} $\mathrm{Hom}_{K\text{-}\mathrm{alg}}(A\oplus B,\overline K) \cong \mathrm{Hom}_{K\text{-}\mathrm{alg}}(A,\overline K) \sqcup \mathrm{Hom}_{K\text{-}\mathrm{alg}}(B,\overline K)$. 
\end{proof}\noindent
We can now make the following crucial definition (again by using the correspondence \ref{ilggc}).
\begin{definition} For every $\mathfrak d \in \iok$, we define the endomorphism $\rho_\mathfrak d \in \mathrm{End}_{K\text{-}\mathrm{alg}}(E_K)$ by \eq{\label{rhodef}\rho_\mathfrak d = i \circ \mathfrak H_K^{-1}(\varrho_\mathfrak d^{-1} : \mathrm{Im}(\varrho_\mathfrak d) \overset \cong \longrightarrow \mathrm{DR}_K)\, ,}
where $i : \pi_\mathfrak d E_K \to E_K$ denotes the natural inclusion. 
\end{definition}\noindent
In order to see that the endomorphisms $\rho_\mathfrak d$ are actually well-defined, one has to observe that all the maps occurring in \eqref{sequofmaps} are $\mathrm{DR}_K$-equivariant, and therefore our use of $\mathfrak H _K^{-1}$ is justified. 
\begin{remark}
The reader should be aware of the fact that the $\rho_\mathfrak d$ are not level preserving like the $\sigma_\mathfrak d$, in the sense that the latter restrict to maps $E_\mathfrak f \to E_\mathfrak f$, for every $\mathfrak f \in \iok$. 
\end{remark} \noindent
Let us give a schematic overview of what we have done so far. \begin{proposition}\label{schoverview} For every $\mathfrak d \in \iok$, we have the following everywhere commutative diagram
\diagramnr{basicrhosigma}{ E_K \ar[rd]^{\sigma_\mathfrak d} \ar[d]^{pr} &  & E_K  \\ \pi_\mathfrak d E_K \ar@/_1.5pc/[rr]_{id} \ar[r]^\cong & E_K \ar[ur]^{\rho_\mathfrak d} \ar[r]^\cong & \pi_\mathfrak dE_K \ar[u]^{i}\,.}
\end{proposition}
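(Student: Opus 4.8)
The plan is to decompose the diagram into its three elementary cells, verify each, and note that their commutativity forces every pair of parallel composites in the diagram to coincide. The only tool I need is the contravariant equivalence $\mathfrak H_K$ of Corollary~\ref{ilggc}: being an equivalence it is faithful, so an identity of $K$-algebra homomorphisms may be checked after applying $\mathfrak H_K$. I will use throughout the lemma immediately above, identifying $\mathfrak H_K(\sigma_{\mathfrak d})$ with $\varrho_{\mathfrak d}:\mathrm{DR}_K\to\mathrm{DR}_K$, and the lemma providing the idempotent $\pi_{\mathfrak d}$, under which $\mathrm{Hom}_{K\text{-}\mathrm{alg}}(\pi_{\mathfrak d}E_K,\overline K)$ is identified with the closed piece $\mathrm{Im}(\varrho_{\mathfrak d})\subset\mathrm{DR}_K$ of the decomposition \eqref{decomposition} and $\mathrm{Hom}_{K\text{-}\mathrm{alg}}((1-\pi_{\mathfrak d})E_K,\overline K)$ with its complement.

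First I would dispose of the two formal cells. The lower right-hand triangle commutes by the very definition \eqref{rhodef} of $\rho_{\mathfrak d}$: the horizontal isomorphism $E_K\overset{\cong}{\longrightarrow}\pi_{\mathfrak d}E_K$ in it is $\mathfrak H_K^{-1}$ of the corestriction bijection $\varrho_{\mathfrak d}^{-1}:\mathrm{Im}(\varrho_{\mathfrak d})\overset{\cong}{\longrightarrow}\mathrm{DR}_K$, and $\rho_{\mathfrak d}$ is by construction this isomorphism followed by the inclusion $i:\pi_{\mathfrak d}E_K\hookrightarrow E_K$. For the lower curved triangle, observe that under $\mathfrak H_K$ the two horizontal isomorphisms become the corestriction bijection $\varrho_{\mathfrak d}:\mathrm{DR}_K\overset{\cong}{\longrightarrow}\mathrm{Im}(\varrho_{\mathfrak d})$ and, by \eqref{rhodef} again, its inverse; these are mutually inverse, hence so are the isomorphisms themselves, which is exactly what the curved cell asserts.

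The one cell carrying content is the upper triangle, i.e.\ the identity $\sigma_{\mathfrak d}=\big(\pi_{\mathfrak d}E_K\overset{\cong}{\longrightarrow}E_K\big)\circ pr$, where $pr:E_K\twoheadrightarrow\pi_{\mathfrak d}E_K$ is multiplication by $\pi_{\mathfrak d}$. I would apply $\mathfrak H_K$ to both sides. On the left, $\mathfrak H_K(\sigma_{\mathfrak d})=\varrho_{\mathfrak d}:\mathrm{DR}_K\to\mathrm{DR}_K$ by the lemma above. On the right, $\mathfrak H_K(pr)$ carries a character $\chi$ of $\pi_{\mathfrak d}E_K$ to $\chi\circ pr$, the unique extension of $\chi$ to $E_K$ annihilating $(1-\pi_{\mathfrak d})E_K$; under the identification recalled in the first paragraph this is precisely the inclusion $\mathrm{Im}(\varrho_{\mathfrak d})\hookrightarrow\mathrm{DR}_K$, while (as noted for the curved cell) $\mathfrak H_K$ of the isomorphism $\pi_{\mathfrak d}E_K\overset{\cong}{\longrightarrow}E_K$ is the corestriction bijection $\varrho_{\mathfrak d}:\mathrm{DR}_K\overset{\cong}{\longrightarrow}\mathrm{Im}(\varrho_{\mathfrak d})$. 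Composing (minding contravariance) I obtain the self-map $\mathrm{DR}_K\overset{\varrho_{\mathfrak d}}{\longrightarrow}\mathrm{Im}(\varrho_{\mathfrak d})\hookrightarrow\mathrm{DR}_K$ of $\mathrm{DR}_K$, which is just $\varrho_{\mathfrak d}$; it therefore agrees with $\mathfrak H_K(\sigma_{\mathfrak d})$, and faithfulness of $\mathfrak H_K$ gives $\sigma_{\mathfrak d}=(\cong)\circ pr$. With the three cells in place, the remaining parallel composites (for instance the two routes from the top-left $E_K$ to the top-right $E_K$, one through $\sigma_{\mathfrak d}$ and one through $pr$) factor through them and hence agree, which is the asserted everywhere-commutativity.

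I do not expect a genuine obstacle: the proposition is a formal unwinding of the constructions of $\sigma_{\mathfrak d}$, $\pi_{\mathfrak d}$ and $\rho_{\mathfrak d}$ together with the preceding lemma $\mathfrak H_K(\sigma_{\mathfrak d})=\varrho_{\mathfrak d}$. The only things needing care are bookkeeping: keeping the contravariance of $\mathfrak H_K$ straight when composing, and remembering that $i$ is not unital and therefore not a morphism of $\mathfrak E_{\Lambda,K}$ --- so the lower right-hand triangle is to be read as a diagram of (not necessarily unital) $K$-algebra maps; this is harmless, since that triangle holds by construction.
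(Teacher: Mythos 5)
Your proof is correct and follows essentially the same route as the paper, which disposes of the proposition in one line by calling it a translation of the decomposition results of section \ref{projsec} through the correspondence of Corollary \ref{ilggc}; you have simply written out that translation cell by cell, with the contravariance bookkeeping done correctly.
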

\begin{proof}
This is only a translation of the results from section \ref{projsec} in terms of the correspondence described in corollary \ref{ilggc}.
\end{proof}
\noindent
The following relations hold by construction.
\begin{lemma}
\label{easylemma}
For all $\mathfrak d, \mathfrak e$ in $\iok$ and every $x \in E_K$, we have \begin{equation*}\begin{array}{ccc}  \rho_\mathfrak d(1) = \pi_\mathfrak d ,& \pi_\mathfrak d \pi_{\mathfrak e} = \pi_{lcm(\mathfrak d,\mathfrak e)}, \\ \sigma_\mathfrak d \circ \sigma_\mathfrak e = \sigma_{\mathfrak d \mathfrak e}, & \rho_\mathfrak d \circ \rho_\mathfrak e = \rho_{\mathfrak d \mathfrak e}, \\  \rho_\mathfrak d \circ \sigma_\mathfrak d(x) = \pi_\mathfrak d x ,& \sigma_\mathfrak d \circ \rho_\mathfrak d (x) = x . \end{array}\end{equation*}
\end{lemma}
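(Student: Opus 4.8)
The plan is to transport all six identities across the contravariant equivalence $\mathfrak H_K$ of Corollary~\ref{ilggc} and to verify them on the side of the monoid $\mathrm{DR}_K$. Since each $E_\mathfrak f$ is finite \'etale over $K$, hence reduced, and the projections $\mathrm{DR}_K\to DR_\mathfrak f$ are surjective, the evaluation map $E_K\hookrightarrow\mathrm{Map}(\mathrm{DR}_K,\overline K)$, $x\mapsto\mathrm{ev}_x=(\chi\mapsto\chi(x))$, is injective, where we use the identification $\mathrm{DR}_K=\varprojlim_\mathfrak f DR_\mathfrak f=\varprojlim_\mathfrak f\mathrm{Hom}_{K\text{-}\mathrm{alg}}(E_\mathfrak f,\overline K)=\mathrm{Hom}_{K\text{-}\mathrm{alg}}(E_K,\overline K)$. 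Thus an identity between elements of $E_K$ may be checked after evaluation at every $\chi\in\mathrm{DR}_K$, and the following dictionary (all parts already established above) will do the job: (i) $\chi\circ\sigma_\mathfrak d=\varrho_\mathfrak d(\chi)=\mathfrak d\chi$, multiplication by $\mathfrak d$ in $\mathrm{DR}_K$; (ii) $\mathrm{ev}_{\pi_\mathfrak d}$ is the indicator function of the clopen set $\mathrm{Im}(\varrho_\mathfrak d)=\mathfrak d\cdot\mathrm{DR}_K$; (iii) writing $\rho_\mathfrak d=i\circ h$ with $h=\mathfrak H_K^{-1}(\varrho_\mathfrak d^{-1})\colon E_K\to\pi_\mathfrak d E_K$ the unital $K$-algebra map dual to $\varrho_\mathfrak d^{-1}\colon\mathrm{Im}(\varrho_\mathfrak d)\xrightarrow{\cong}\mathrm{DR}_K$ and $i\colon\pi_\mathfrak d E_K\hookrightarrow E_K$ the (non-unital) inclusion, one reads off from the contravariance of $\mathfrak H_K$ --- equivalently, directly from the commutative diagram of Proposition~\ref{schoverview} --- that $\chi\circ\rho_\mathfrak d=\varrho_\mathfrak d^{-1}(\chi)$ whenever $\chi\in\mathrm{Im}(\varrho_\mathfrak d)$ and $\chi\circ\rho_\mathfrak d=0$ otherwise.

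Granting this, most of the relations are pure bookkeeping. The identity $\rho_\mathfrak d(1)=\pi_\mathfrak d$ is immediate from (iii), since $h$ sends $1$ to the unit $\pi_\mathfrak d$ of $\pi_\mathfrak d E_K$ and $i$ merely includes it; and $\sigma_\mathfrak d\circ\sigma_\mathfrak e=\sigma_{\mathfrak d\mathfrak e}$ reflects the associativity $\mathfrak d(\mathfrak e\chi)=(\mathfrak d\mathfrak e)\chi$ in $\mathrm{DR}_K$ (equivalently, the $\Lambda_K$-action of $\iok$ on $E_K$ is a monoid action by construction). For $\rho_\mathfrak d\circ\rho_\mathfrak e=\rho_{\mathfrak d\mathfrak e}$ I would evaluate at $\chi\in\mathrm{DR}_K$: $\chi\circ\rho_\mathfrak d=0$ (hence $\chi\circ\rho_\mathfrak d\circ\rho_\mathfrak e=0$) unless $\chi=\mathfrak d\psi\in\mathrm{Im}(\varrho_\mathfrak d)$, in which case $\chi\circ\rho_\mathfrak d=\psi$ and $\psi\circ\rho_\mathfrak e=0$ unless $\psi\in\mathrm{Im}(\varrho_\mathfrak e)$, i.e.\ unless $\chi\in\mathfrak d(\mathfrak e\cdot\mathrm{DR}_K)=(\mathfrak d\mathfrak e)\cdot\mathrm{DR}_K=\mathrm{Im}(\varrho_{\mathfrak d\mathfrak e})$ --- only the monoid structure of $\mathrm{DR}_K$ is involved --- the surviving value being $\varrho_\mathfrak e^{-1}\varrho_\mathfrak d^{-1}(\chi)=\varrho_{\mathfrak d\mathfrak e}^{-1}(\chi)$, which is exactly $\chi\circ\rho_{\mathfrak d\mathfrak e}$. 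Likewise $\chi(\rho_\mathfrak d\sigma_\mathfrak d(x))$ equals $0=\chi(\pi_\mathfrak d)\chi(x)$ when $\chi\notin\mathrm{Im}(\varrho_\mathfrak d)$ and equals $(\psi\circ\sigma_\mathfrak d)(x)=(\mathfrak d\psi)(x)=\chi(x)=\chi(\pi_\mathfrak d x)$ when $\chi=\mathfrak d\psi$; and $\chi(\sigma_\mathfrak d\rho_\mathfrak d(x))=\bigl((\mathfrak d\chi)\circ\rho_\mathfrak d\bigr)(x)=\bigl(\varrho_\mathfrak d^{-1}(\mathfrak d\chi)\bigr)(x)=\chi(x)$, since $\mathfrak d\chi$ always lies in $\mathrm{Im}(\varrho_\mathfrak d)$. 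This yields $\rho_\mathfrak d\circ\sigma_\mathfrak d(x)=\pi_\mathfrak d x$ and $\sigma_\mathfrak d\circ\rho_\mathfrak d(x)=x$.

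The only relation that is not formal is $\pi_\mathfrak d\pi_{\mathfrak e}=\pi_{\mathrm{lcm}(\mathfrak d,\mathfrak e)}$, which by (ii) is the set-theoretic assertion
\[
\mathrm{Im}(\varrho_\mathfrak d)\cap\mathrm{Im}(\varrho_{\mathfrak e})=\mathrm{Im}\bigl(\varrho_{\mathrm{lcm}(\mathfrak d,\mathfrak e)}\bigr)\qquad\text{inside }\mathrm{DR}_K.
\]
The inclusion ``$\supseteq$'' is formal, writing $\mathrm{lcm}(\mathfrak d,\mathfrak e)=\mathfrak d\mathfrak d'$ for some $\mathfrak d'\in\iok$, so that $\mathrm{lcm}(\mathfrak d,\mathfrak e)\cdot\mathrm{DR}_K=\mathfrak d\cdot(\mathfrak d'\cdot\mathrm{DR}_K)\subseteq\mathfrak d\cdot\mathrm{DR}_K$, and symmetrically for $\mathfrak e$. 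For ``$\subseteq$'' I would pass to finite level. A short inverse-limit argument --- the fibers $\{\delta\in DR_\mathfrak f:[\mathfrak a]_\mathfrak f\delta=\gamma_\mathfrak f\}$ are finite, nonempty and form an inverse system --- shows $\mathrm{Im}(\varrho_\mathfrak a)=\varprojlim_\mathfrak f[\mathfrak a]_\mathfrak f DR_\mathfrak f$ for every $\mathfrak a\in\iok$, so it is enough to prove $[\mathfrak d]_\mathfrak f DR_\mathfrak f\cap[\mathfrak e]_\mathfrak f DR_\mathfrak f=[\mathrm{lcm}(\mathfrak d,\mathfrak e)]_\mathfrak f DR_\mathfrak f$ in each finite monoid $DR_\mathfrak f$. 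Using the decomposition $DR_\mathfrak f\cong\coprod_{\mathfrak g\mid\mathfrak f}C_{\mathfrak f/\mathfrak g}$ of \eqref{prop2} one checks --- and this unwinding of the relation $\sim_\mathfrak f$ is where the genuine content sits --- that $[\mathfrak a]_\mathfrak f DR_\mathfrak f$ is the union of exactly those components $C_{\mathfrak f/\mathfrak g}$ with $\gcd(\mathfrak a,\mathfrak f)\mid\mathfrak g\mid\mathfrak f$, an upward closed family with least element $\gcd(\mathfrak a,\mathfrak f)$, each full component $C_{\mathfrak f/\mathfrak g}$ being attained because ideals prime to $\mathfrak f$ exhaust the strict ray class group $C_{\mathfrak f/\mathfrak g}$. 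Intersecting two such families produces the upward closed family with least element $\mathrm{lcm}(\gcd(\mathfrak d,\mathfrak f),\gcd(\mathfrak e,\mathfrak f))$, and a prime-by-prime check on the exponents gives $\mathrm{lcm}(\gcd(\mathfrak d,\mathfrak f),\gcd(\mathfrak e,\mathfrak f))=\gcd(\mathrm{lcm}(\mathfrak d,\mathfrak e),\mathfrak f)$, which is precisely the least element of the family corresponding to $[\mathrm{lcm}(\mathfrak d,\mathfrak e)]_\mathfrak f DR_\mathfrak f$.

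I expect this finite-level identification of $[\mathfrak a]_\mathfrak f DR_\mathfrak f$ with a union of components to be the only step that requires real work; everything else is a translation of Proposition~\ref{schoverview} and of the monoid structure of $\mathrm{DR}_K$ through $\mathfrak H_K$, which is presumably why the statement is phrased as holding ``by construction''.
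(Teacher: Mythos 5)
Your proof is correct and follows the same route as the paper: everything is transported through $\mathfrak H_K$ to the monoid $\mathrm{DR}_K$, where all the relations except $\pi_\mathfrak d\pi_{\mathfrak e}=\pi_{\mathrm{lcm}(\mathfrak d,\mathfrak e)}$ are formal consequences of Proposition \ref{schoverview}, and that last one reduces to the set identity $\mathfrak d\cdot\mathrm{DR}_K\cap\mathfrak e\cdot\mathrm{DR}_K=\mathrm{lcm}(\mathfrak d,\mathfrak e)\cdot\mathrm{DR}_K$. The paper dispatches this identity with a one-line appeal to Remark \ref{intersection}, whereas you verify it at finite level via the component decomposition \eqref{prop2}; your verification is sound and simply supplies the detail the paper compresses into that citation.
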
 
\begin{proof}
For the second assertion on the first line, notice that, by remark \ref{intersection}, we have \eqst{\mathfrak d \cdot \mathrm{DR}_K \cap \mathfrak e \cdot \mathrm{DR}_K = lcm(\mathfrak d,\mathfrak e) \cdot \mathrm{DR}_K\,.}
\end{proof}\noindent
Finally, we can define our desired algebraic endomotive. 
\begin{proposition}
\label{defalgek}
The inductive system $(E_\mathfrak f)_{\mathfrak f \in I_K}$,with transition maps $(\xi_{\mathfrak f,\mathfrak f '})$ defined in \eqref{transmaps}, of finite, \'etale $K$-algebras, together with the action of $I_K$ on $E_K = \varinjlim _\mathfrak f E_\mathfrak f$ in terms of the $\rho_\mathfrak d$, defines an algebraic endomotive $\mathcal E _K$ over $K$, given by \eqst{\mathcal E _K = E_K \rtimes I_K\,.}
\end{proposition}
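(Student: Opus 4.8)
The plan is to verify that the data $((E_\mathfrak f)_{\mathfrak f\in I_K},I_K)$ with the action of $I_K$ on $E_K$ via the $\rho_\mathfrak d$ satisfies the axioms in the definition of an algebraic endomotive from Section \ref{endomotives}. Recall that we need: (a) an inductive system in $\mathfrak E_K$, the category of finite-dimensional \'etale $K$-algebras; (b) an abelian semigroup acting on the inductive limit by $K$-algebra endomorphisms; (c) the idempotents $e=\rho(1)$; and (d) each $\rho\in S$ inducing an isomorphism $E_K\overset\cong\to eE_Ke=eE_K$.

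For (a), the algebras $E_\mathfrak f$ are finite products of strict ray class fields by \eqref{arithalg}, hence finite-dimensional \'etale $K$-algebras, and the transition maps $\xi_{\mathfrak f,\mathfrak f'}$ of \eqref{transmaps} are $K$-algebra homomorphisms by construction (they are $\mathfrak H_K^{-1}$ applied to the $\mathrm{DR}_K$-equivariant projections $\pi_{\mathfrak f,\mathfrak f'}$, which lie in $\mathfrak S_{\mathrm{DR}_K}$). For (b), $I_K$ is abelian since ideal multiplication is commutative; each $\rho_\mathfrak d$ is a $K$-algebra endomorphism of $E_K$ by definition \eqref{rhodef} (composition of $\mathfrak H_K^{-1}$ of an isomorphism of $\mathrm{DR}_K$-sets with the inclusion $i:\pi_\mathfrak d E_K\to E_K$); and the relation $\rho_\mathfrak d\circ\rho_\mathfrak e=\rho_{\mathfrak d\mathfrak e}$, which makes $\mathfrak d\mapsto\rho_\mathfrak d$ a semigroup action, is recorded in Lemma \ref{easylemma} (it descends from $\varrho_\mathfrak d\circ\varrho_\mathfrak e=\varrho_{\mathfrak d\mathfrak e}$, i.e.\ $(\mathfrak d\mathfrak e)\cdot=\mathfrak d\cdot\circ\,\mathfrak e\cdot$ on $\mathrm{DR}_K$, via the contravariant equivalence \ref{ilggc}). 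For (c), $\rho_\mathfrak d(1)=\pi_\mathfrak d$ is an idempotent, again by Lemma \ref{easylemma}.

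The remaining point (d) is where the real content sits, and I expect it to be the main (though modest) obstacle: one must check that $\rho_\mathfrak d$ restricts to an isomorphism $E_K\overset\cong\to\pi_\mathfrak d E_K$ (note $\pi_\mathfrak d E_K\pi_\mathfrak d=\pi_\mathfrak d E_K$ since $E_K$ is commutative). This is precisely the commutative diagram of Proposition \ref{schoverview}: under $\mathfrak H_K$, the map $\rho_\mathfrak d$ corresponds to the isomorphism $\varrho_\mathfrak d^{-1}:\mathrm{Im}(\varrho_\mathfrak d)\overset\cong\to\mathrm{DR}_K$ and $\sigma_\mathfrak d$ corresponds to $\varrho_\mathfrak d$, so $\rho_\mathfrak d\circ\sigma_\mathfrak d$ is multiplication by $\pi_\mathfrak d$ on $E_K$ and $\sigma_\mathfrak d\circ\rho_\mathfrak d=\mathrm{id}$ on $\pi_\mathfrak d E_K$, giving mutually inverse isomorphisms between $E_K$ and $\pi_\mathfrak d E_K$. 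Concretely this reduces, via the equivalence \ref{ilggc}, to the fact established in Section \ref{projsec} that $\varrho_\mathfrak d$ is injective with closed image $\mathrm{Im}(\varrho_\mathfrak d)=\varprojlim_\mathfrak f\mathfrak d\,DR_{\mathfrak d\mathfrak f}$, cut out by the idempotent $\pi_\mathfrak d$ via the decomposition \eqref{decomposition}.

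Having checked (a)--(d), the definition of algebraic endomotive applies verbatim and produces the unital associative $K$-algebra $\mathcal E_K=E_K\rtimes I_K$, described by the generators $U_{\rho_\mathfrak d},U^*_{\rho_\mathfrak d}$ and the crossed-product relations; no further verification is needed. I would also remark that this is genuinely a crossed product in the endomotive sense (not a unital action), since the $\rho_\mathfrak d$ are not level-preserving and $\pi_\mathfrak d\neq 1$ for $\mathfrak d\neq(1)$, as flagged in the remark preceding Proposition \ref{schoverview}.
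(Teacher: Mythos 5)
Your proposal is correct and follows essentially the same route as the paper: the paper's own proof simply observes that the only nontrivial point is that each $\rho_\mathfrak d$ induces an isomorphism $E_K\overset\cong\to\pi_\mathfrak d E_K$, and refers to Proposition \ref{schoverview} for this, exactly as in your step (d). Your explicit verification of the remaining axioms (a)--(c) just spells out what the paper treats as immediate from the preceding constructions and Lemma \ref{easylemma}.
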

\begin{proof}
We only have to show that the $\rho_\mathfrak d$ induce an isomorphism between $E_K$ and $\pi_\mathfrak d E_K$, but this is contained in proposition \ref{schoverview}.
\end{proof}
\begin{remark}
As done in \cite{fun} for the case of $K=\q$, it is possible to construct integral models of our algebraic endomotives $\mathcal E _K$. These integral models will play a role in a forthcoming paper.
\end{remark}

\section{Proof of Theorem \ref{ThmA} and \ref{ThmB}}

\begin{theorem}
The algebraic endomotive $\mathcal E _K$ gives rise to a $C^*$-dynamical system that is naturally isomorphic to the BC-system $\mathcal A _K$ (see \eqref{bctype}).
\end{theorem}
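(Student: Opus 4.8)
The plan is to produce the isomorphism in two layers: first match the underlying $C^*$-algebras $\mathcal E_K^{an}$ and $A_K$, and then check that the time evolution supplied to $\mathcal E_K^{an}$ by Tomita--Takesaki agrees with $\sigma_t$. To begin I would compute the spectrum of the analytic endomotive. By construction $\mathcal X=\varprojlim_{\mathfrak f}\mathrm{Hom}_{K\text{-}\mathrm{alg}}(E_{\mathfrak f},\overline K)$, and by \eqref{abelian} each term is canonically $DR_{\mathfrak f}$, the transition maps $\xi_{\mathfrak f,\mathfrak f'}$ corresponding under $\mathfrak H_K$ precisely to the projections $\pi_{\mathfrak f,\mathfrak f'}$; passing to the limit gives a canonical homeomorphism of topological monoids $\mathcal X\cong\mathrm{DR}_K$. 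Then I invoke proposition \ref{keyprop}, which identifies $\mathrm{DR}_K$ naturally with $Y_K=\okhat\times_{\okhat^{\times}}\kab$, so that altogether $C(\mathcal X)\cong C(Y_K)$.

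Next I would match the semigroup actions. By the lemma identifying $\varrho_{\mathfrak d}=\mathfrak H_K(\sigma_{\mathfrak d})$ together with proposition \ref{schoverview}, the homeomorphism that $\mathfrak d\in\iok$ induces on $\mathcal X$ becomes, under $\mathcal X\cong\mathrm{DR}_K$, the multiplication-by-$\mathfrak d$ map $\varrho_{\mathfrak d}$ onto the clopen piece $\mathfrak d\,\mathrm{DR}_K$, and the corresponding endomorphism of $C(\mathcal X)$ is the transpose of this injection extended by zero off $\mathfrak d\,\mathrm{DR}_K$. Choosing an idele $\rho_{\mathfrak d}\in\okhat^{\natural}$ generating $\mathfrak d$, proposition \ref{centralprop} gives $\mathfrak d=\iota(\rho_{\mathfrak d})[\rho_{\mathfrak d}]$, and a one-line computation in the commutative monoid $\mathrm{DR}_K$ shows that multiplication by $\mathfrak d$ on $\mathrm{DR}_K\cong Y_K$ is exactly the map $[\rho,\alpha]\mapsto[\rho\rho_{\mathfrak d},[\rho_{\mathfrak d}]^{-1}\alpha]$ of \eqref{obviousaction}. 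Hence the semigroup crossed products coincide, $\mathcal E_K^{an}=C(\mathcal X)\rtimes\iok\cong C(Y_K)\rtimes\iok=A_K$, and one checks that this isomorphism carries the $K$-rational subalgebra $\mathcal E_K=E_K\rtimes\iok$ into $A_K$ and intertwines the Galois symmetries: since each $E_{\mathfrak f}$ is a finite product of abelian normal extensions of $K$, the $\mathrm{Gal}(\overline K/K)$-action on $\mathcal E_K^{an}$ descends to $\kab$ (Prop.~4.30 of \cite{ConMar}), and under $\mathcal X\cong\mathrm{DR}_K$ this is left multiplication by $\mathrm{DR}_K^{\times}=\kab$, matching $\gamma\cdot[\rho,\alpha]=[\rho,\gamma\alpha]$.

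It remains to put in the time evolution. First I would verify that $\mathcal E_K$ is \emph{uniform}: the transition maps realize the projections $\pi_{\mathfrak f,\mathfrak f'}\colon DR_{\mathfrak f'}\to DR_{\mathfrak f}$, all of whose fibers have the same cardinality $|DR_{\mathfrak f'}|/|DR_{\mathfrak f}|$ (visible from the stratification $DR_{\mathfrak f}\cong\coprod_{\mathfrak d\mid\mathfrak f}C_{\mathfrak f/\mathfrak d}$ and multiplicativity of ray-class numbers), so the normalized counting measures $\mu_{\mathfrak f}$ on $\mathcal X_{\mathfrak f}$ are compatible and give a Prokhorov measure $\mu$ on $\mathcal X\cong Y_K$. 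Proposition \ref{equalmeasures}---equivalently, the characterization of $\mu_1$ by $\mu_1(Y_K)=1$ together with the scaling condition \eqref{scalingcond}---then identifies $\mu$ with the measure $\mu_1$ of Section~\ref{measure1}; thus the state $\varphi_\mu(fu_s)=\delta_{s,1}\int_{\mathcal X}f\,d\mu$ corresponds to the unique $KMS_1$-state of $\mathcal A_K$. Because $\mu_1$ has full support the GNS representation $\pi_{\varphi_\mu}$ is faithful, and since $\sigma_t$ already acts on $A_K$ with $\varphi_\mu$ as a $KMS_1$-state, uniqueness of the modular automorphism group forces $\sigma^{\varphi_\mu}_t$ to be $\sigma_{-t}$ on $\mathcal M_{\varphi_\mu}$, which visibly preserves the $C^*$-subalgebra $C(\mathcal X)\rtimes\iok$. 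So all the hypotheses in the definition of a measured analytic endomotive hold, $\mathcal E_K^{mean}$ exists, and $\mathcal E_K^{mean}=(C(\mathcal X)\rtimes\iok,\sigma^{\varphi_\mu})\cong(A_K,\sigma_t)=\mathcal A_K$, the reparametrization $t\mapsto-t$ being absorbed into the sign convention in $N_{K/\q}(s)^{it}$.

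The formal part---Steps 1 and 2---is a direct translation through the Borger--de Smit correspondence (corollary \ref{ilggc}), proposition \ref{keyprop} and proposition \ref{centralprop}. I expect the real obstacle to be the last step: the identification $\mu=\mu_1$ of proposition \ref{equalmeasures}, and the ensuing verification that the Tomita--Takesaki modular group of $\varphi_\mu$ exists on, preserves, and restricts to $\sigma_t$ on the $C^*$-algebra $C(\mathcal X)\rtimes\iok$, together with the technical measure-theoretic hypotheses (faithfulness of $\pi_{\varphi_\mu}$ and regularity of $\varphi_\mu$). Once $\varphi_\mu$ is known to be the $KMS_1$-state of $\mathcal A_K$, these rest essentially on the classification of $KMS$-states of $\mathcal A_K$ from \cite{LLN}.
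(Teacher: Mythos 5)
Your proposal is correct and follows essentially the same route as the paper: Step One identifies $\mathcal X\cong\mathrm{DR}_K\cong Y_K$ via the Borger--de Smit correspondence and proposition \ref{keyprop}, matching the $\iok$- and $\kab$-actions through proposition \ref{centralprop}; Step Two establishes uniformity, identifies the Prokhorov measure with $\mu_1$ via the scaling condition (proposition \ref{equalmeasures}), and appeals to the $KMS_1$-characterization of the modular group. The only places you are slightly quicker than the paper are the equicardinality of the fibers of $DR_{\mathfrak f'}\to DR_{\mathfrak f}$ (the paper compares fibers across the strata $C_{\mathfrak f/\mathfrak d}$ via the surjection from $\ok/\mathfrak f'\times C_{\mathfrak f'}$, cf.\ lemma \ref{fibercount}) and the faithfulness of $\pi_{\varphi_\mu}$, but these are fillable details, not a different argument.
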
 \noindent
We will prove the theorem in two steps.
\subsection{Step One}
\noindent
For every number field $K$ there is a natural map of topological monoids \eqst{\Psi : Y_K = \okhat \times_{\okhat^\times} \kab \longrightarrow \mathrm{DR}_K} given by \eqst{[\rho,\alpha] \longmapsto \iota(\rho)\alpha^{-1}\,.}
This map is well defined due to the fact that $\iota(s) = [s]^{-1} \in \mathrm{Gal}(K^{ab}/K)$ for $s \in \okhat^\times$ (see proposition \ref{centralprop}). 
\begin{proposition}
\label{keyprop}
The map $\Psi$ is an equivariant isomorphism of topological monoids with respect to the natural actions of each of $\iok$ and $\kab$.  
\end{proposition}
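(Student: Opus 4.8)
The plan is to establish that $\Psi$ is a well-defined bijective homomorphism of topological monoids, and then to check equivariance for the two actions separately. Since $Y_K$ and $\mathrm{DR}_K$ are both compact, and a continuous bijection between compact Hausdorff spaces is automatically a homeomorphism, it suffices to produce a continuous algebra-theoretic inverse or, more simply, to verify injectivity and surjectivity set-theoretically once continuity of $\Psi$ is clear. Continuity of $\Psi$ follows because $\iota : \okhat \to \mathrm{DR}_K$ is continuous (it is defined as a projective limit of maps $\mathcal O_K/\mathfrak f \to DR_\mathfrak f$), and the action of $\kab \cong \mathrm{DR}_K^\times$ on $\mathrm{DR}_K$ by multiplication is continuous; the map on $\okhat \times \kab$ descends to $Y_K$ by the compatibility $\iota(s) = [s]^{-1}$ noted before the statement. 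That $\Psi$ is a monoid homomorphism is immediate from $\iota$ being a homomorphism and $\kab$ being central (indeed contained in $\mathrm{DR}_K^\times$), so that $\Psi([\rho,\alpha][\rho',\alpha']) = \iota(\rho\rho')(\alpha\alpha')^{-1} = \iota(\rho)\alpha^{-1}\iota(\rho')\alpha'^{-1}$.

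\textbf{Injectivity.} Suppose $\iota(\rho)\alpha^{-1} = \iota(\rho')\alpha'^{-1}$. I would first reduce to the case where the first coordinate lies in $\okhat^\natural$ by an approximation/density argument, or — cleaner — argue directly on each finite level $DR_\mathfrak f \cong \coprod_{\mathfrak d \mid \mathfrak f} C_{\mathfrak f/\mathfrak d}$ (cf.\ \eqref{prop2}). The key structural input is Proposition \ref{iotakernel}: $\iota(\rho) = \iota(\rho')$ iff $\rho$ and $\rho'$ differ by an element of $U_K^+$, the closure of the totally positive units. Using that $\alpha, \alpha' \in \kab \cong \mathrm{DR}_K^\times$ and that $\mathrm{DR}_K^\times$ acts freely by multiplication on the part of $\mathrm{DR}_K$ where it acts (i.e.\ on $\mathrm{DR}_K^\times$ itself, and more generally translating between fibers), one deduces that $\rho/\rho'$ maps into the units and that the corresponding $\alpha/\alpha'$ is then forced to be $[\rho/\rho']^{-1}$ via Proposition \ref{centralprop}; this is precisely the relation defining the $\okhat^\times$-quotient, so $[\rho,\alpha] = [\rho',\alpha']$ in $Y_K$. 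The bookkeeping has to be done fiberwise over the decomposition by conductors $\mathfrak d$, matching the "ideal part'' $(\rho) \in \iok$ on the $Y_K$ side with the summand index $\mathfrak d$ on the $\mathrm{DR}_K$ side.

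\textbf{Surjectivity.} Every element of $\mathrm{DR}_K$ lies, via \eqref{prop2} at each level, in some component $C_{\mathfrak f/\mathfrak d}$, i.e.\ is of the form $\mathfrak d \cdot (\text{unit part})$. Choosing a finite idele $\rho \in \okhat^\natural$ with $(\rho) = \mathfrak d$ and using Proposition \ref{centralprop}, $\iota(\rho) = \mathfrak d [\rho]^{-1}$, so after multiplying by a suitable $\alpha \in \kab$ one hits the target; the unit part is reached because $\iota$ restricted to $\okhat^\times$ surjects onto $\mathrm{DR}_K^\times \cong \kab$ (again Proposition \ref{centralprop}) and $\kab$ acts transitively on itself. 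Concretely, given $x \in \mathrm{DR}_K$ with $x = \varrho_\mathfrak d(u)$ for a unique $\mathfrak d$ and $u \in \mathrm{DR}_K^\times$, pick $\rho \in \okhat^\natural$ with $(\rho) = \mathfrak d$ and set $\alpha = ([\rho] u^{-1})$, viewing $u \in \kab$; then $\Psi([\rho,\alpha]) = \iota(\rho)\alpha^{-1} = \mathfrak d[\rho]^{-1}[\rho]^{-1}\,\cdots$ — the exact choice of $\alpha$ is determined by solving $\iota(\rho)\alpha^{-1} = x$, which is possible since $\mathrm{DR}_K^\times$ acts simply transitively on the relevant orbit.

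\textbf{Equivariance.} For $\iok$: the action on $Y_K$ is $s\cdot[\rho,\alpha] = [\rho s, [s]^{-1}\alpha]$, hence $\Psi(s\cdot[\rho,\alpha]) = \iota(\rho s)([s]^{-1}\alpha)^{-1} = \iota(\rho)\iota(s)[s]\alpha^{-1}$; but $\iota(s) = (s)[s]^{-1}$ by Proposition \ref{centralprop} (here $s$ is viewed as a finite idele generating the ideal), so this equals $(s)\cdot\iota(\rho)\alpha^{-1} = (s)\cdot\Psi([\rho,\alpha])$, matching multiplication by the ideal $(s)$ on $\mathrm{DR}_K$. For $\kab$: $\Psi(\gamma\cdot[\rho,\alpha]) = \Psi([\rho,\gamma\alpha]) = \iota(\rho)(\gamma\alpha)^{-1} = \iota(\rho)\alpha^{-1}\gamma^{-1}$, which is the action $\gamma\cdot\Psi([\rho,\alpha])$ on $\mathrm{DR}_K$ by right multiplication by $\gamma^{-1}$ (or left, since $\mathrm{DR}_K^\times$ is abelian) — one just has to fix conventions so the two actions are named consistently.

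\textbf{Main obstacle.} The genuinely delicate point is injectivity, specifically handling first coordinates $\rho \in \okhat$ that are \emph{not} in $\okhat^\natural$ (so $(\rho)$ is not a finite-index ideal, e.g.\ $\rho$ has a zero component). For such $\rho$ the clean formula $\iota(\rho) = (\rho)[\rho]^{-1}$ of Proposition \ref{centralprop} is unavailable, and one must instead argue at finite level using the explicit decomposition $DR_\mathfrak f \cong \coprod_{\mathfrak d\mid \mathfrak f} C_{\mathfrak f/\mathfrak d}$ together with Proposition \ref{iotakernel}, tracking how the $\okhat^\times$-action on $Y_K$ matches the fibration structure of $\mathrm{DR}_K$. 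Everything else — continuity, the homomorphism property, surjectivity, and equivariance — is a direct unwinding of the properties of $\iota$ collected in Propositions \ref{iotakernel} and \ref{centralprop}.
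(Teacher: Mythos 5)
Your overall plan---reduce to the finite-level maps $\Psi_\mathfrak f : \ok/\mathfrak f\times_{(\ok/\mathfrak f)^\times}C_\mathfrak f\to DR_\mathfrak f$, exploit the decomposition \eqref{prop2}, and conclude by compactness and passage to the limit---is exactly the strategy of the paper, and your equivariance computation coincides with the one given there. But the heart of the proposition is the bijectivity of $\Psi_\mathfrak f$, and there you have only named the difficulty rather than resolved it. The missing ingredient is the finite-level structural lemma the paper proves first: a monoid isomorphism $\sigma_\mathfrak f:\ok/\mathfrak f\to\coprod_{\mathfrak d\mid\mathfrak f}(\ok/(\mathfrak f/\mathfrak d))^\times$ making $\iota_\mathfrak f$ compatible with $DR_\mathfrak f\cong\coprod_{\mathfrak d\mid\mathfrak f}C_{\mathfrak f/\mathfrak d}$ via the maps $j_{\mathfrak f/\mathfrak d}$ (proved by reducing to $\mathfrak f=\mathfrak p^k$ and filtering $\mathfrak p/\mathfrak p^k$). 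This is precisely what handles first coordinates $\rho$ whose associated ideal is not coprime to $\mathfrak f$ (your ``main obstacle''): after matching components, injectivity reduces to the exact sequence $(\ok/\mathfrak f)^\times\xrightarrow{\,j_\mathfrak f\,}C_\mathfrak f\to C_1\to 1$, since two preimages of the same element force $\alpha\beta^{-1}\in\mathrm{Im}(j_\mathfrak f)$, which is exactly the $(\ok/\mathfrak f)^\times$-relation defining $Y_{K,\mathfrak f}$. Your sketch via ``$\rho/\rho'$ maps into the units'' does not make sense when $\rho'$ is a zero divisor in $\ok/\mathfrak f$, and Proposition \ref{iotakernel} alone does not substitute for this bookkeeping.

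On surjectivity, two of your specific claims are false. First, $\iota$ restricted to $\okhat^\times$ does \emph{not} surject onto $\mathrm{DR}_K^\times\cong\kab$: by Proposition \ref{centralprop} its image is $[\okhat^\times]^{-1}$, whose cokernel is the strict class group, so the missing part must come from the second coordinate $\alpha\in C_\mathfrak f$. Second, not every $x\in\mathrm{DR}_K$ factors as $\varrho_\mathfrak d(u)$ with $\mathfrak d\in\iok$ and $u\in\mathrm{DR}_K^\times$; this decomposition is only valid at each finite level, not in the profinite limit (consider the image of a $\rho\in\okhat$ with a zero component). At a fixed level what must be shown is $C_\mathfrak f\cdot\mathrm{Im}(j_{\mathfrak f/\mathfrak d})=C_{\mathfrak f/\mathfrak d}$ inside $DR_\mathfrak f$, and the paper points out that $C_\mathfrak f$ does not act transitively on $\mathrm{Im}(j_\mathfrak d)$, arguing instead that $C_\mathfrak f\cdot\mathfrak d$ meets every fibre of $C_{\mathfrak f/\mathfrak d}\to C_1$ and using \eqref{deltacalculus}. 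In short: your plan is the correct one and the equivariance step is complete, but the injectivity and surjectivity steps are placeholders; the finite-level lemma and the class-field-theoretic fibre argument still have to be supplied.
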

\begin{proof}
It is enough to show that the map \eq{\label{fundisom} \Psi_\mathfrak f : \mathcal O_K / \mathfrak f \times _{(\ok / \mathfrak f)^\times} C_\mathfrak f \longmapsto DR_{\mathfrak f}\, ,} given by \eqst{[\rho,\alpha] \mapsto \iota_\mathfrak f(\rho) \alpha^{-1}\, ,} is an isomorphism of finite monoids for every $\mathfrak f \in \iok$. This follows from the compactness of $Y_{K,\mathfrak f} = \ok / \mathfrak f \times_{(\ok / \mathfrak f)^\times} C_\mathfrak f$ and the simple fact that $\varprojlim_\mathfrak f Y_{K,\mathfrak f} \cong Y_K$. Denote by $\pi_0$ the group of connected components of the infinite idele group $(\mathbb A_{K,\infty})^\times$ and consider, for every $\mathfrak f \in \iok$, the following everywhere commutative and exact diagram \diagramnr{basicnt}{ & & 1 & \\ \pi_0\times(\ok / \mathfrak f)^\times \ar[r] & C_\mathfrak f \ar[r] & C_K \ar[u] \ar[r] & 1 \, \, \\ (\ok / \mathfrak f)^\times \ar[u] \ar[r]^{j_\mathfrak f} & C_\mathfrak f \ar[r] \ar[u]_{=} & C_1 \ar[u] \ar[r] & 1 \, ,  \\ & & \pi_0 \ar[u] &}
as can be found for example in \cite{nk}. From \ref{eulertotient} we know that $\ok / \mathfrak f$ and $\coprod_{\mathfrak d \mid \mathfrak f} (\ok / \mathfrak d)^\times$ are isomorphic as sets, but they are in fact isomorphic as monoids:  
\begin{lemma} There is an isomorphism of monoids $\sigma_\mathfrak f : \ok / \mathfrak f \to \coprod _{\mathfrak d | \mathfrak f} (\ok / \mathfrak d)^\times$ such that the following diagram is commutative \diagram{\ok / \mathfrak f \ar[d]_{\sigma_\mathfrak f} \ar[rr]^{\iota_\mathfrak f} & & DR_\mathfrak f \ar[d] \\ \coprod_{\mathfrak d \mid \mathfrak f} (\ok / (\mathfrak f  / \mathfrak d))^\times \ar[rr]^{\coprod j_{\mathfrak f / \mathfrak d}} & & \coprod_{\mathfrak d \mid \mathfrak f} C_ {\mathfrak f / \mathfrak d}\,.}
\end{lemma}
\begin{proof}
It is enough to consider the case $\mathfrak f = \mathfrak p ^k$ where $\mathfrak p$ a prime ideal. The general case follows using the chinese remainder theorem. It is well known that $\ok / \mathfrak p ^k$ is a local ring with maximal ideal $\mathfrak p / \mathfrak p ^k$, i.e., we have a disjoint union $\ok / \mathfrak p ^k = (\ok / \mathfrak p ^k)^\times \sqcup \mathfrak p / \mathfrak p^k$. Further, there is a filtration $\{0\} \subset \mathfrak p ^{k-1} / \mathfrak p^{k} \subset \mathfrak p ^{k-2}/\mathfrak p ^k \subset \ldots \subset \mathfrak p / \mathfrak p^k$ and, for $x \in \mathfrak p / \mathfrak p^k$ and $x_+ \in \ok$ a (positive) lift, we have \eqst{x \in \mathfrak p^{k-i} / \mathfrak p ^k - \mathfrak p ^{k-i+1} / \mathfrak p ^k \Leftrightarrow \mathfrak p^{k-i} \ || \ (x_+) \Leftrightarrow  x_+ \in (\ok / \mathfrak p ^{k-i+1})^\times \, .}
A counting argument as in \ref{eulertotient}, and recalling the definition of \eqref{prop2}, finishes the proof.
\end{proof} \noindent
Now, we can conclude the \textbf{injectivity} of $\Psi_\mathfrak f$, because assuming $\iota_\mathfrak f(\rho)\alpha^{-1} = \iota_\mathfrak f(\sigma)\beta^{-1}$, for $\rho,\sigma \in \ok / \mathfrak f$, $\alpha,\beta \in C_\mathfrak f$, we must have that $\alpha$ and $\beta$ map to the same element in $C_1$. This is, because $\iota_\mathfrak f(\sigma)\alpha\beta^{-1}$ lies in the image of $\iota_\mathfrak f$ and is therefore mapped to the trivial element in $C_1$. But lying over the same element in $C_1$ means that there exists $ s \in (\ok / \mathfrak f)^\times$ such that $\alpha \beta^{-1} = [s] = \iota_\mathfrak f(s)^{-1}$, and therefore, we get $[\rho,\alpha] = [\sigma,\beta] \in Y_{K,\mathfrak f}$. \\
To prove \textbf{surjectivity}, we use again the decomposition $DR_\mathfrak f = \coprod_{\mathfrak d | \mathfrak f} C_{\mathfrak f / \mathfrak d}$. We have to show that, for every $\mathfrak d \mid \mathfrak f$, we have $C_\mathfrak f \cdot \mathrm{Im}(j_\mathfrak d) = C_{\mathfrak f / \mathfrak d}$, where $\cdot$ denotes the multiplication in the monoid $DR_\mathfrak f$. One has to be careful because it is not true that $C_\mathfrak f$ acts transitively on $\mathrm{Im}(j_\mathfrak d)$.\footnote{Consider for the example the case when $gcd(\mathfrak d,\mathfrak f / \mathfrak d)=1$, then $\mathfrak d, \mathfrak d^2 \in \mathrm{Im}(j_\mathfrak d)$ but $\mathfrak d ^2 \notin C_\mathfrak f \cdot \mathfrak d$} Instead, we show that $C_\mathfrak f \cdot \mathfrak d$ intersects every fibre of $C_{\mathfrak f / \mathfrak d} \to C_1$ non-trivially. For every element $x \in C_1$, we find lifts $x_\mathfrak f \in C_\mathfrak f$ and $x_{\mathfrak f / \mathfrak d} \in C_{\mathfrak f / \mathfrak d}$ such that $x_\mathfrak f$ is mapped to $x_{\mathfrak f / \mathfrak d}$ under the natural projection $DR_\mathfrak f \to DR_{\mathfrak f / \mathfrak d}$. Our claim is equivalent to $x_\mathfrak f \mathfrak d \sim_\mathfrak f x_{\mathfrak f /\mathfrak d}\mathfrak d$, which is equivalent (see \eqref{deltacalculus}) to $x_\mathfrak f \sim_{\mathfrak f /\mathfrak d} x_{\mathfrak f / \mathfrak d}$, which is is true by construction. \\
To finish the proof, we have to show that $\Psi$ is compatible with each of the natural actions of $\iok$ and $\kab$ on $Y_K$ and $\mathrm{DR}_K$ respectively. Let us recall that the action of $\iok \cong \okhat^\natural / \okhat ^\times$ on $Y_K$ is given by $s[\rho,\alpha] = [\rho s,[s]^{-1}\alpha]$, and $\kab$ is acting by $\gamma [\rho,\alpha] = [\rho,\gamma\alpha]$. The equivariance of $\Psi$ under the action of $\kab$ is clear, and the equivariance under the action of $\iok$ follows from proposition \ref{centralprop}, namely $\Psi(s[\rho,\alpha]) = \iota(\rho)\iota(s)[s]\alpha^{-1}\overset{\ref{centralprop}}= \iota(\rho)(s)\alpha^{-1}= (s)\Psi([\rho,\alpha])$. This shows that $\Psi$ is an isomorphism of topological $\mathrm{DR}_K$-monoids.
\end{proof}
\noindent
Now we obtain immediately:
\begin{corollary}
Let $K$ be a number field. Then the isomorphism $\Psi$ from above induces an isomorphism \eqst{\Psi : A_K = C(Y_K)\rtimes \iok 
\longrightarrow \mathcal E _K^{an} = C(\mathrm{DR}_K)\rtimes \iok} between the $C^*$-algebra $A_K$ of the BC-system $\mathcal A _K$ and the analytic endomotive $\mathcal E _K^{an}$. 
\end{corollary}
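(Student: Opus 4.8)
The plan is to note that the corollary is a purely $C^*$-algebraic consequence of Proposition \ref{keyprop} (no time evolution enters at this stage), obtained by transporting the equivariant homeomorphism $\Psi$ to the level of semigroup crossed products. First I would make the identification of the underlying data of $\mathcal E_K^{an}$ with $C(\mathrm{DR}_K)\rtimes\iok$ explicit. By the Lemma giving $DR_\mathfrak f\cong\mathrm{Hom}_{K\text{-}\mathrm{alg}}(E_\mathfrak f,\overline K)$ compatibly with the transition maps $\xi_{\mathfrak f,\mathfrak f'}$ and the projections $\pi_{\mathfrak f,\mathfrak f'}$, passing to the projective limit yields a homeomorphism $\mathcal X=\varprojlim_\mathfrak f\mathrm{Hom}_{K\text{-}\mathrm{alg}}(E_\mathfrak f,\overline K)\cong\varprojlim_\mathfrak f DR_\mathfrak f=\mathrm{DR}_K$; and by \eqref{rhodef} together with Proposition \ref{schoverview} the endomorphism of $C(\mathcal X)$ attached to $\mathfrak d\in\iok$ (dual to $\rho_\mathfrak d$) corresponds, under this homeomorphism, to the endomorphism dual to the injective multiplication-by-$\mathfrak d$ map $\varrho_\mathfrak d:\mathrm{DR}_K\to\mathrm{DR}_K$, with support projection $\rho_\mathfrak d(1)=\pi_\mathfrak d$ going to $\chi_{\mathrm{Im}(\varrho_\mathfrak d)}$ (recall $\mathrm{Im}(\varrho_\mathfrak d)=\mathfrak d\cdot\mathrm{DR}_K$ is clopen by \eqref{decomposition}).

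Next I would put both crossed products side by side and observe that they are of the same type (injective corner endomorphisms in the sense of Laca \cite{Laca2}), hence share a universal property. On the BC side, $A_K=C(Y_K)\rtimes\iok$ is generated by $C(Y_K)$ and isometries $u_s$, $s\in\iok$, with $u_su_s^\ast=\chi_{sY_K}$ and $u_sf=\alpha_s(f)u_s$, where $sY_K\subset Y_K$ is the clopen image of the injective map $j_s:[\rho,\alpha]\mapsto[\rho s,[s]^{-1}\alpha]$ and $\alpha_s(f)$ equals $f\circ j_s^{-1}$ on $sY_K$ and $0$ elsewhere. On the endomotive side, $\mathcal E_K^{an}=C(\mathrm{DR}_K)\rtimes\iok$ is generated by $C(\mathrm{DR}_K)$ and isometries $U_\mathfrak d$ with $U_\mathfrak d U_\mathfrak d^\ast=\chi_{\mathrm{Im}(\varrho_\mathfrak d)}$ and $U_\mathfrak d g=\phi_\mathfrak d(g)U_\mathfrak d$, where $\phi_\mathfrak d(g)$ equals $g\circ\varrho_\mathfrak d^{-1}$ on $\mathrm{Im}(\varrho_\mathfrak d)$ and $0$ elsewhere.

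Finally I would invoke Proposition \ref{keyprop}: $\Psi$ is a homeomorphism and satisfies $\Psi(s\cdot[\rho,\alpha])=(s)\cdot\Psi([\rho,\alpha])$, i.e.\ it intertwines $j_s$ with $\varrho_{(s)}$ and therefore carries $sY_K$ onto $\mathrm{Im}(\varrho_{(s)})$. Consequently the $\ast$-isomorphism $C(Y_K)\to C(\mathrm{DR}_K)$, $f\mapsto f\circ\Psi^{-1}$, sends $\chi_{sY_K}$ to $\chi_{\mathrm{Im}(\varrho_{(s)})}$ and intertwines $\alpha_s$ with $\phi_{(s)}$; by the universal property it extends, together with $u_s\mapsto U_{(s)}$, to a $\ast$-homomorphism $A_K\to\mathcal E_K^{an}$, while pullback along $\Psi$ together with $U_\mathfrak d\mapsto u_s$ (where $(s)=\mathfrak d$) provides a two-sided inverse, which proves the claim. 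The only genuinely technical point — and the step I would be most careful about — is the first one: verifying that under the correspondence $\mathfrak H_K$ the endomorphisms $\rho_\mathfrak d$ of $E_K$ really dualize to the multiplication maps $\varrho_\mathfrak d$ on $\mathrm{DR}_K$ with the correct support projections, for which \eqref{rhodef}, \eqref{decomposition}, Proposition \ref{schoverview}, Lemma \ref{easylemma} and the identification $\iok\cong\okhat^\natural/\okhat^\times$, $s\leftrightarrow(s)$, are all used together; once this matching of conventions is in place, everything else is formal.
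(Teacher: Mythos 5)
Your proposal is correct and follows essentially the same route as the paper, which states the corollary without proof as an immediate consequence of Proposition \ref{keyprop}: the equivariant homeomorphism $\Psi$ identifies the two semigroup dynamical systems $(C(Y_K),\iok)$ and $(C(\mathrm{DR}_K),\iok)$, and hence their crossed products. Your write-up merely makes explicit the two points the paper leaves implicit — that $\mathcal X=\varprojlim\mathrm{Hom}_{K\text{-}\mathrm{alg}}(E_\mathfrak f,\overline K)\cong\mathrm{DR}_K$ with the $\rho_\mathfrak d$ dualizing to the $\varrho_\mathfrak d$, and that equivariance plus the universal property of Laca's crossed product yields the isomorphism — both of which are handled correctly.
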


\subsection{Step Two}
\noindent
It remains to show that $\mathcal E _K$ defines a measured analytic endomotive whose time evolution on $\mathcal E _K ^{an}$ agrees with the time evolution of the BC-system $\mathcal A _K$ (see \eqref{bctype}). \\ \\
First, we will show that $\mathcal E _K$ is a uniform endomotive, i.e., the normalized counting measures $\mu_\mathfrak f$ on $DR_\mathfrak f$ give rise to a measure $\mu _K = \varprojlim \mu_\mathfrak f$ on $\mathrm{DR}_K = \mathrm{Hom}(E_K,\overline K)$. \\
Then, in order to show that $\mu_K$ indeed defines a time evolution on $\mathcal E _K ^{an}$ using the procedure described in section \ref{ttcon} which, in addition, agrees with the time evolution of $\mathcal A _K$, we only have to show that $\mu_K$ equals the measure $\mu_1$ on $Y_K$ characterizing the unique $KMS_1$-state of $\mathcal A _K$ (see section \ref{measure1}). \\
This follows from standard arguments in Tomita-Takesaki theory. Namely, if $\mu_K$ defines a time evolution $\sigma_t$ on $\mathcal E _K ^{an}$, then we know a priori that the corresponding state $\varphi_{\mu_K} : \mathcal E _K^{an}\to \complex$ is a $KMS_1$-state characterizing the time evolution $\sigma_t$ uniquely (cf., chapter 4, section 4.1 \cite{ConMar} and the references therein).

\begin{lemma} Let $\mathfrak f$ be an arbitrary ideal in $\iok$. Then we have \eqst{|DR_\mathfrak f |=  2^{r_1} h_K N _{K/\q}(\mathfrak f)\, ,}
where $h_K$ denotes the class number of $K$ and $r_1$ is equal to the number of real embeddings of $K$.
\end{lemma}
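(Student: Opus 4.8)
The plan is to turn the computation into a sum of strict ray class numbers and then evaluate that sum. By the decomposition \eqref{prop2} we have $DR_\mathfrak{f}\cong\coprod_{\mathfrak{d}\mid\mathfrak{f}}C_{\mathfrak{f}/\mathfrak{d}}$, hence, reindexing by $\mathfrak{e}=\mathfrak{f}/\mathfrak{d}$,
\[
|DR_\mathfrak{f}|=\sum_{\mathfrak{d}\mid\mathfrak{f}}|C_{\mathfrak{f}/\mathfrak{d}}|=\sum_{\mathfrak{e}\mid\mathfrak{f}}|C_\mathfrak{e}|\,.
\]
(The same reduction can be obtained from Proposition \ref{keyprop}: it identifies $DR_\mathfrak{f}$ with the balanced product $\ok/\mathfrak{f}\times_{(\ok/\mathfrak{f})^\times}C_\mathfrak{f}$, which one counts by splitting $\ok/\mathfrak{f}$ into its $(\ok/\mathfrak{f})^\times$-orbits; by the monoid isomorphism $\ok/\mathfrak{f}\cong\coprod_{\mathfrak{d}\mid\mathfrak{f}}(\ok/(\mathfrak{f}/\mathfrak{d}))^\times$ proved inside the proof of \ref{keyprop}, the orbits are indexed by the divisors $\mathfrak{d}\mid\mathfrak{f}$ with stabilizers the kernels of the reductions $(\ok/\mathfrak{f})^\times\to(\ok/(\mathfrak{f}/\mathfrak{d}))^\times$, and a short orbit-counting argument reproduces the sum above.) It therefore remains to compute each $|C_\mathfrak{e}|$ and to carry out the sum over the divisors of $\mathfrak{f}$.

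For the first step I would chase the exact sequences collected in the diagram \eqref{basicnt}. The bottom column $1\to\pi_0\to C_1\to C_K\to 1$, with $|\pi_0|=2^{r_1}$ and $|C_K|=h_K$, pins down the strict class number $|C_1|$, and the row $(\ok/\mathfrak{e})^\times\xrightarrow{j_\mathfrak{e}}C_\mathfrak{e}\to C_1\to 1$ then gives $|C_\mathfrak{e}|=|C_1|\cdot|\mathrm{im}(j_\mathfrak{e})|$, with $|\mathrm{im}(j_\mathfrak{e})|$ expressed through the ideal totient $\Phi(\mathfrak{e}):=|(\ok/\mathfrak{e})^\times|$. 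The delicate point — the place where I expect the actual work to lie — is to track the contribution of the global unit group (equivalently of its closure $U_K^+$, cf.\ Proposition \ref{iotakernel}), which enters $|C_1|$ through $\ker(\pi_0\to C_1)$ and $|\mathrm{im}(j_\mathfrak{e})|$ through $\ker(j_\mathfrak{e})$, and to see that it collapses so as to leave $|C_\mathfrak{e}|=2^{r_1}h_K\,\Phi(\mathfrak{e})$.

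Granting that, the second step is the ideal-theoretic version of $\sum_{d\mid n}\phi(d)=n$, namely $\sum_{\mathfrak{e}\mid\mathfrak{f}}\Phi(\mathfrak{e})=N_{K/\q}(\mathfrak{f})$, which is the set-level bijection $\ok/\mathfrak{f}\cong\coprod_{\mathfrak{d}\mid\mathfrak{f}}(\ok/\mathfrak{d})^\times$ (cf.\ \ref{eulertotient}), or can be checked via multiplicativity of $\Phi$ and the Chinese remainder theorem, reducing to $\sum_{j=0}^{k}\Phi(\mathfrak{p}^j)=N(\mathfrak{p})^k$. Combining the two steps,
\[
|DR_\mathfrak{f}|=\sum_{\mathfrak{e}\mid\mathfrak{f}}|C_\mathfrak{e}|=2^{r_1}h_K\sum_{\mathfrak{e}\mid\mathfrak{f}}\Phi(\mathfrak{e})=2^{r_1}h_K\,N_{K/\q}(\mathfrak{f})\,.
\]
Since the whole argument hinges on the precise closed form for $|C_\mathfrak{e}|$, I would cross-check it first on $K=\q$ and on an imaginary quadratic field, where the narrow ray class numbers $|C_\mathfrak{e}|$ are immediate to evaluate by hand.
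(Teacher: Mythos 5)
Your reduction via \eqref{prop2} to $|DR_\mathfrak f|=\sum_{\mathfrak e\mid\mathfrak f}|C_\mathfrak e|$ is fine, and so is the divisor identity $\sum_{\mathfrak e\mid\mathfrak f}\varphi_K(\mathfrak e)=N_{K/\q}(\mathfrak f)$. The gap sits exactly at the point you yourself flag as delicate: the closed form $|C_\mathfrak e|=2^{r_1}h_K\,\varphi_K(\mathfrak e)$ is false, and the unit contribution does not collapse. The exact sequence behind \eqref{basicnt} gives
\[
|C_\mathfrak e|=\frac{2^{r_1}h_K\,\varphi_K(\mathfrak e)}{|U_\mathfrak e|}\,,
\]
where $U_\mathfrak e$ is the image of the global unit group $\OO_K^\times$ in $\pi_0\times(\OO_K/\mathfrak e)^\times$, equivalently $|U_\mathfrak e|=[\OO_K^\times:\OO^\times_{K,\mathfrak e(\infty),1}]$ with $\OO^\times_{K,\mathfrak e(\infty),1}$ the totally positive units congruent to $1$ modulo $\mathfrak e$. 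This index exceeds $1$ essentially always (already $-1$ obstructs it) and it varies with $\mathfrak e$. Your own proposed cross-check on $K=\q$ refutes the collapse: $C_{(N)}\cong\mathrm{Gal}(\q(\zeta_N)/\q)$ has order $\phi(N)$, not $2\phi(N)$. Hence your sum actually produces $2^{r_1}h_K\sum_{\mathfrak e\mid\mathfrak f}\varphi_K(\mathfrak e)/|U_\mathfrak e|$, and nothing in your argument removes the varying denominators; for $K=\q$ the sum evaluates to $N/2\cdot 2^{r_1}h_K$, not $2^{r_1}h_KN$.

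The paper's proof avoids the divisor sum entirely and cancels the unit index explicitly: it records $|C_\mathfrak f|=2^{r_1}\varphi_K(\mathfrak f)h_K/|U_\mathfrak f|$ with the denominator intact, and then counts $DR_\mathfrak f$ as the image of the surjection $\ok/\mathfrak f\times C_\mathfrak f\to DR_\mathfrak f$, invoking Proposition \ref{iotakernel} to claim that all fibers have cardinality $\varphi_K(\mathfrak f)/|U_\mathfrak f|$; the two occurrences of $|U_\mathfrak f|$ then cancel to give $2^{r_1}h_KN_{K/\q}(\mathfrak f)$. To rescue your route you would need the identity $\sum_{\mathfrak e\mid\mathfrak f}\varphi_K(\mathfrak e)/|U_\mathfrak e|=N_{K/\q}(\mathfrak f)$, and the $K=\q$ computation above shows this is not a formality --- it in fact puts the decomposition \eqref{prop2} in visible tension with the stated formula. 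So do carry out the $K=\q$ sanity check you propose, but do it before, not after, committing to the closed form for $|C_\mathfrak e|$: it is precisely the step that fails.
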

\begin{proof}
Recall the fundamental exact sequence of groups (see, e.g., \cite{nk}) \diagram{1 \ar[r] & U_\mathfrak f \ar[r] &  (\ok / \mathfrak f)^\times \ar[r]^{ \ \ \ j_\mathfrak f} & C_\mathfrak f \ar[r] & C_1 \ar[r] & 1 \, ,} with notations as in \eqref{basicnt} and $U_\mathfrak f$ making the sequence exact, from which we obtain immediately \eqst{ |C_\mathfrak f| = \frac{2^{r_1}\varphi_K(\mathfrak f) h_K}{|U_\mathfrak f |} \, ,}
where $\varphi_K$ denotes the generalized Euler totient function from Appendix \ref{appeuler}.
In order to count the elements of $\mathrm{DR}_K$ we notice (cf., proposition \ref{iotakernel}) that the fibers of the natural projection $\ok / \mathfrak f \times C_\mathfrak f \to \ok/\mathfrak f \times_{(\ok / \mathfrak f)^\times} C_\mathfrak f \cong DR_\mathfrak f$ all have the same cardinality given by $\frac{\varphi_K(\mathfrak f)}{ |U_\mathfrak f |}$ and this finishes the proof.
\end{proof}

\begin{lemma}
\label{fibercount}
Let $\mathfrak f$ and $\mathfrak g$ be in $\iok$ such that $\mathfrak f$ divides $\mathfrak g$. Then the cardinalities of all the fibres of the natural projection $DR_\mathfrak g \to DR_\mathfrak f$ are equal to $|DR_\mathfrak g| /  |DR_\mathfrak f| = N _{K/\q} (\mathfrak g / \mathfrak f )$.
\end{lemma}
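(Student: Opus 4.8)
The value asserted is immediate from the previous lemma together with the multiplicativity of the idele norm: $|DR_\mathfrak g|/|DR_\mathfrak f| = N_{K/\q}(\mathfrak g)/N_{K/\q}(\mathfrak f) = N_{K/\q}(\mathfrak g/\mathfrak f)$. So the whole content is that all fibres of $\pi := \pi_{\mathfrak f,\mathfrak g}\colon DR_\mathfrak g \to DR_\mathfrak f$ have the \emph{same} cardinality. The plan is to obtain this by a double-counting argument through the finite ``adelic'' model $\ok/\mathfrak g \times C_\mathfrak g$, exploiting that every surjection that intervenes has fibres of one common cardinality.

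First I would set up the commuting square
\diagram{ \ok/\mathfrak g \times C_\mathfrak g \ar[r]^{q_\mathfrak g} \ar[d]_{r} & DR_\mathfrak g \ar[d]^{\pi} \\ \ok/\mathfrak f \times C_\mathfrak f \ar[r]^{q_\mathfrak f} & DR_\mathfrak f }
where $q_\mathfrak g\colon (\rho,\alpha)\mapsto \iota_\mathfrak g(\rho)\alpha^{-1}$ and $q_\mathfrak f\colon(\rho,\alpha)\mapsto\iota_\mathfrak f(\rho)\alpha^{-1}$ are the natural surjections used in the proof of Proposition \ref{keyprop} and of the previous lemma (the composition of $\Psi_\mathfrak g$, resp.\ $\Psi_\mathfrak f$, with the quotient by $(\ok/\mathfrak g)^\times$, resp.\ $(\ok/\mathfrak f)^\times$), and $r$ is the product of the reduction maps $\ok/\mathfrak g\to\ok/\mathfrak f$ and $C_\mathfrak g\to C_\mathfrak f$. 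Commutativity comes from two facts already recorded: $\iota = \varprojlim_\mathfrak f \iota_\mathfrak f$ is compatible with the projections \eqref{projmaps} (both $\iota_\mathfrak g$ and $\iota_\mathfrak f$ being defined levelwise by lifting to totally positive elements and taking the generated ideal), and $\pi$ restricts on the groups of units to the canonical surjection $C_\mathfrak g = DR_\mathfrak g^\times \to DR_\mathfrak f^\times = C_\mathfrak f$ of \eqref{prop1}. Then I would count the fibres of the composite $\Phi := \pi\circ q_\mathfrak g = q_\mathfrak f\circ r$ in two ways. Going round via $q_\mathfrak f\circ r$: the map $r$ is a product of a surjective ring homomorphism (kernel of order $N_{K/\q}(\mathfrak g/\mathfrak f)$) and a surjective group homomorphism (fibres of order $|C_\mathfrak g|/|C_\mathfrak f|$), so all its fibres have one common cardinality, and composing with $q_\mathfrak f$, whose fibres all have the common cardinality $\varphi_K(\mathfrak f)/|U_\mathfrak f|$ by the previous lemma (this being precisely Proposition \ref{iotakernel}), we conclude that $\Phi$ has all fibres of a single cardinality $c$. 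Going round via $\pi\circ q_\mathfrak g$: since $q_\mathfrak g$ has all fibres of the common cardinality $\varphi_K(\mathfrak g)/|U_\mathfrak g|$ and $\Phi^{-1}(y) = q_\mathfrak g^{-1}(\pi^{-1}(y))$ for $y\in DR_\mathfrak f$, we get $c = |\pi^{-1}(y)|\cdot \varphi_K(\mathfrak g)/|U_\mathfrak g|$. Hence $|\pi^{-1}(y)|$ is independent of $y$; summing over $y\in DR_\mathfrak f$ identifies this common value with $|DR_\mathfrak g|/|DR_\mathfrak f| = N_{K/\q}(\mathfrak g/\mathfrak f)$, as in the first sentence.

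The one point that requires genuine input rather than bookkeeping is the equinumerosity of the fibres of $q_\mathfrak f$ and $q_\mathfrak g$ — equivalently, that $\iota_\mathfrak f$ has ``constant multiplicity'' — and this is exactly Proposition \ref{iotakernel} (the kernel of $\iota$ being $U_K^+$), which is already exploited in the preceding lemma; granting it, the argument above is a pure counting exercise with no new idea. A harmless alternative to the explicit fibre cardinalities $\varphi_K(\cdot)/|U_\cdot|$ would be to invoke only their constancy, which is all that is used.
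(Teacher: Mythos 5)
Your proof is correct and follows essentially the same route as the paper: the same commutative square with corners $\ok/\mathfrak g\times C_\mathfrak g$, $\ok/\mathfrak f\times C_\mathfrak f$ and the two Deligne--Ribet quotients, the same observation that three of the four surjections have fibres of constant cardinality (the horizontal ones by the preceding lemma, i.e.\ Proposition \ref{iotakernel}, the left vertical one trivially), hence so does the fourth, and the same appeal to the cardinality formula $|DR_\mathfrak f|=2^{r_1}h_KN_{K/\q}(\mathfrak f)$ to identify the common value. The only cosmetic difference is that you map directly onto $DR_\mathfrak g$ via $\Psi_\mathfrak g$ rather than onto the balanced product $\ok/\mathfrak g\times_{(\ok/\mathfrak g)^\times}C_\mathfrak g$ and then transporting along \eqref{fundisom}.
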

\begin{proof}
To show that all the cardinalities of the fibers of the projection $DR_\mathfrak g \to DR_\mathfrak f$ are equal, we look at the following commutative diagram (with the obvious maps) \diagram{\ok /\mathfrak g \times C_\mathfrak g \ar[r] \ar[d] & \ok /\mathfrak g \times_{(\ok / \mathfrak g)^\times} C_\mathfrak g \ar[d]^\xi \\ \ok/\mathfrak f \times C_\mathfrak f \ar[r] & \ok /\mathfrak f \times_{(\ok / \mathfrak f)^\times} C_\mathfrak f \, .}
All the maps in the diagram are surjective, and in order to show that the cardinalities of all the fibers of $\xi$ are equal, it is enough to show this property for the other three maps. In the proof of the preceding lemma, we have shown that the horizontal maps have this property, and for the remaining vertical map on the left, this property is trivial. Therefore, we conclude that the cardinalities of all the fibers of $\xi$ are equal and, together with the isomorphism \eqref{fundisom} and the preceding lemma, the assertion follows. 
\end{proof}

\begin{corollary}
The algebraic endomotive $\mathcal E _K$ is uniform.
\end{corollary}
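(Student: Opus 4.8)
The plan is to verify directly the definition of a uniform endomotive, which for the system $(E_\mathfrak f)_{\mathfrak f\in\iok}$ amounts to checking that $\mu_\mathfrak f = (\xi_{\mathfrak f,\mathfrak f'})_*\mu_{\mathfrak f'}$ whenever $\mathfrak f\mid\mathfrak f'$, where $\mu_\mathfrak f$ denotes the normalized counting measure on the finite space $\mathcal X_\mathfrak f = \mathrm{Hom}_{K\text{-}\mathrm{alg}}(E_\mathfrak f,\overline K)$.

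First I would translate this into the language of the Deligne-Ribet monoid. By \eqref{abelian} we may identify $\mathcal X_\mathfrak f$ with the finite monoid $DR_\mathfrak f$, and, since $\xi_{\mathfrak f,\mathfrak f'}$ was by definition $\mathfrak H_K^{-1}(\pi_{\mathfrak f,\mathfrak f'})$ (see \eqref{transmaps}), under this identification the transition map becomes the natural projection $\pi_{\mathfrak f,\mathfrak f'}:DR_{\mathfrak f'}\to DR_\mathfrak f$ of \eqref{projmaps}. Thus $\mu_\mathfrak f$ assigns mass $1/|DR_\mathfrak f|$ to each point, and the required identity $\mu_\mathfrak f = (\pi_{\mathfrak f,\mathfrak f'})_*\mu_{\mathfrak f'}$ reduces to showing that for every $x\in DR_\mathfrak f$ one has $|\pi_{\mathfrak f,\mathfrak f'}^{-1}(x)|/|DR_{\mathfrak f'}| = 1/|DR_\mathfrak f|$, that is, that every fibre of $\pi_{\mathfrak f,\mathfrak f'}$ has cardinality $|DR_{\mathfrak f'}|/|DR_\mathfrak f|$. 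But this is precisely the content of Lemma \ref{fibercount}. Hence $\mu_\mathfrak f = (\xi_{\mathfrak f,\mathfrak f'})_*\mu_{\mathfrak f'}$ for all $\mathfrak f\mid\mathfrak f'$, so $(E_\mathfrak f)_{\mathfrak f\in\iok}$ is uniform and the $\mu_\mathfrak f$ assemble into a projective system of measures inducing, via the Prokhorov extension, a probability measure $\mu_K = \varprojlim_\mathfrak f \mu_\mathfrak f$ on $\mathrm{DR}_K = \mathrm{Hom}_{K\text{-}\mathrm{alg}}(E_K,\overline K)$.

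There is essentially no obstacle at this point: the only genuine arithmetic input — the equidistribution of the fibres of the projection maps of the Deligne-Ribet monoid — was already isolated in Lemma \ref{fibercount} (and ultimately in the exact sequences computing $|DR_\mathfrak f| = 2^{r_1}h_K N_{K/\q}(\mathfrak f)$). The only point deserving a word of care is the purely formal bookkeeping that the contravariant equivalence $\mathfrak H_K$ really does carry the transition maps $\xi_{\mathfrak f,\mathfrak f'}$ to the projections $\pi_{\mathfrak f,\mathfrak f'}$, which is how $E_\mathfrak f$ and $\xi_{\mathfrak f,\mathfrak f'}$ were constructed in the first place, and that the counting measure on $\mathcal X_\mathfrak f$ matches the counting measure on $DR_\mathfrak f$ under the identification \eqref{abelian}.
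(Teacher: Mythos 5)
Your proposal is correct and follows the same route as the paper: identify the spaces $\mathrm{Hom}_{K\text{-}\mathrm{alg}}(E_\mathfrak f,\overline K)$ with $DR_\mathfrak f$, observe that the transition maps become the projections $\pi_{\mathfrak f,\mathfrak f'}$, and invoke Lemma \ref{fibercount} for the equal cardinality of the fibres. The paper's own proof is exactly this computation of $\xi_*(\mu_{\mathfrak f'})$ on a subset, so there is nothing to add.
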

\begin{proof}
Let $\mathfrak f,\mathfrak g \in \iok$ with $\mathfrak f  \mid   \mathfrak g$, and denote by $\xi$ the natural projection $DR_\mathfrak g \to DR_\mathfrak f$. In order to show that $\mathcal E _K$ is uniform, we have to show that $\xi_*(\mu_\mathfrak g) = \mu_\mathfrak f$, which follows directly from the preceding lemma. More precisely, if we take a subset $X \subset \mathrm{DR}_K$, we obtain \eqst{\xi_*(\mu_\mathfrak g) (X) = \mu_\mathfrak g (\xi^{-1}(X)) \overset{\ref{fibercount}}= |X| \cdot N _{K/\q}(\mathfrak g / \mathfrak f) / |DR_\mathfrak g| \overset{\ref{fibercount}}= |X| / |DR_\mathfrak f| = \mu_\mathfrak f(X)\,.}
\end{proof}

\begin{lemma}
Denote by $\widetilde \mu_\mathfrak f$ the push-forward of $\mu_1$ under the projection $\pi_\mathfrak f :Y_K \overset\Psi\longrightarrow \mathrm{DR}_K \longrightarrow DR_{\mathfrak f}$. Then $\widetilde \mu_\mathfrak f$ is the normalized counting measure on $DR_\mathfrak f$. 
\end{lemma}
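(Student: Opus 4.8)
The plan is to push the assertion down to a finite counting statement. Since $\mu_1$ is, by Section \ref{measure1}, the push-forward of the product measure $\prod_\mathfrak p\mu_\mathfrak p\times\mu_\mathcal G$ along the projection $\okhat\times\kab\to Y_K$, and push-forwards compose, $\widetilde\mu_\mathfrak f$ is the push-forward of $\prod_\mathfrak p\mu_\mathfrak p\times\mu_\mathcal G$ along $\okhat\times\kab\to Y_K\xrightarrow{\Psi}\mathrm{DR}_K\xrightarrow{\pi_\mathfrak f}DR_\mathfrak f$. Unwinding the definition of $\Psi$ and of the transition maps $\pi_{\mathfrak f,\mathfrak f'}$, this composite equals the coordinatewise reduction $\okhat\times\kab\to\ok/\mathfrak f\times C_\mathfrak f$ followed by the map $[m,c]\mapsto\iota_\mathfrak f(m)c^{-1}$ of \eqref{fundisom}, which by Proposition \ref{keyprop} descends to a bijection $Y_{K,\mathfrak f}=\ok/\mathfrak f\times_{(\ok/\mathfrak f)^\times}C_\mathfrak f\to DR_\mathfrak f$. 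Since the push-forward of $\prod_\mathfrak p\mu_\mathfrak p$ along $\okhat\to\ok/\mathfrak f$ is the normalized counting measure (a Haar measure pushed to a finite quotient), and likewise for $\mu_\mathcal G$ along $\kab\to C_\mathfrak f$, I would conclude that for each $x\in DR_\mathfrak f$
\[ \widetilde\mu_\mathfrak f(\{x\})=\frac{N_x}{N_{K/\q}(\mathfrak f)\,|C_\mathfrak f|},\qquad N_x:=\#\{(m,c)\in\ok/\mathfrak f\times C_\mathfrak f:\iota_\mathfrak f(m)c^{-1}=x\}. \]
It therefore suffices to prove that $N_x$ does not depend on $x$: a probability measure on the finite set $DR_\mathfrak f$ that is constant on points is necessarily the normalized counting measure.

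To establish this constancy I would multiply the relation $\iota_\mathfrak f(m)c^{-1}=x$ by the invertible element $c\in C_\mathfrak f$ to rewrite it as $\iota_\mathfrak f(m)=xc$, so that $N_x=\sum_{c\in C_\mathfrak f}\#\iota_\mathfrak f^{-1}(xc)$. By Proposition \ref{iotakernel} the nonempty fibres of $\iota_\mathfrak f\colon\ok/\mathfrak f\to DR_\mathfrak f$ have a common cardinality; moreover, for fixed $x$ the element $xc$ stays in one and the same stratum of $DR_\mathfrak f$ as $c$ varies, so $N_x$ is a constant multiple of $\#\{c\in C_\mathfrak f:xc\in\mathrm{Im}(\iota_\mathfrak f)\}$. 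I would then feed in the decomposition $DR_\mathfrak f\cong\coprod_{\mathfrak d\mid\mathfrak f}C_{\mathfrak f/\mathfrak d}$ of \eqref{prop2} together with the factorization of $\iota_\mathfrak f$ through $\coprod_{\mathfrak d\mid\mathfrak f}j_{\mathfrak f/\mathfrak d}$ provided by the auxiliary lemma in the proof of Proposition \ref{keyprop}: this identifies $\mathrm{Im}(\iota_\mathfrak f)\cap C_{\mathfrak f/\mathfrak d}$ with $\mathrm{Im}(j_{\mathfrak f/\mathfrak d})=\ker(C_{\mathfrak f/\mathfrak d}\to C_1)$. Writing $x=\mathfrak a\mathfrak d$ with $\mathfrak a\in C_{\mathfrak f/\mathfrak d}$, one sees that $xc\in\mathrm{Im}(\iota_\mathfrak f)$ precisely when the image of $c$ in $C_1$ lies in one fixed coset; since the projection $C_\mathfrak f\to C_1$ of strict ray class groups is surjective, the number of such $c$ equals $|C_\mathfrak f|/|C_1|$, independently of $\mathfrak a$ and of $\mathfrak d$. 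Hence $N_x$ is independent of $x$, and the lemma follows.

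The step I expect to be the main obstacle is precisely the last one — checking that $\#\{c:xc\in\mathrm{Im}(\iota_\mathfrak f)\}$ is the same for every stratum $C_{\mathfrak f/\mathfrak d}$ of $DR_\mathfrak f$ — since this is where one must assemble the compatible description of $\mathrm{Im}(\iota_\mathfrak f)$ across all strata (from the auxiliary lemma on $\sigma_\mathfrak f$ and \eqref{prop2}) and the surjectivity of $C_\mathfrak f\to C_1$. A variant that bypasses this count uses the Corollary that $\mathcal E_K$ is uniform: the normalized counting measures $\mu_\mathfrak f$ then form a projective system with limit $\mu_K$ on $\mathrm{DR}_K$, and it is enough to verify that $\Psi^{-1}_*\mu_K$ is a probability measure on $Y_K$ satisfying the scaling condition \eqref{scalingcond}; by the uniqueness recalled in Section \ref{measure1} this forces $\Psi^{-1}_*\mu_K=\mu_1$, whence $\widetilde\mu_\mathfrak f=(\pi_\mathfrak f)_*\mu_K=\mu_\mathfrak f$. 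In that route the work shifts to checking \eqref{scalingcond}, for which one would combine the decomposition \eqref{decomposition} with Lemma \ref{fibercount}.
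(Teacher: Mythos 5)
Your reduction of the lemma to the finite statement that
$N_x=\#\{(m,c)\in\ok/\mathfrak f\times C_\mathfrak f:\iota_\mathfrak f(m)c^{-1}=x\}$
is independent of $x\in DR_\mathfrak f$ is correct, and it is in substance the same reduction the paper makes (there the claim is that the preimages $X_q\subset\okhat\times\kab$ of the points of $DR_\mathfrak f$ all have the same Haar measure, which the paper asserts by producing a translation carrying $X_q$ onto $X_{q'}$). The gap is in the constancy claim itself, at the sentence ``by Proposition \ref{iotakernel} the nonempty fibres of $\iota_\mathfrak f$ have a common cardinality''. Proposition \ref{iotakernel} identifies these fibres with the orbits of the image $U_\mathfrak f=\ker j_\mathfrak f$ of $\mathcal O_{K,+}^\times$ in $(\ok/\mathfrak f)^\times$ acting on $\ok/\mathfrak f$ by multiplication, and this action is not free: it fixes $0$, and more generally the stabilizer of $m$ grows with the part of $\mathfrak f$ dividing $(m^+)$. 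Running your own stratum-by-stratum count with the correct fibre sizes gives, for $x$ in the stratum $C_{\mathfrak f/\mathfrak d}$ of \eqref{prop2}, $N_x=|U_{\mathfrak f/\mathfrak d}|\cdot|C_\mathfrak f|/|C_1|$, which depends on $\mathfrak d$ as soon as some totally positive unit has nontrivial image in $(\ok/(\mathfrak f/\mathfrak d))^\times$. Concretely, for $K=\q(i)$ and $\mathfrak f=(3)$ the monoid $DR_{(3)}$ has three points (two invertible, one not), the map $\ok/(3)\times C_{(3)}\to DR_{(3)}$ has fibres of cardinalities $8,8,2$, and the push-forward of $\mu_1$ is $(4/9,4/9,1/9)$ rather than $(1/3,1/3,1/3)$. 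In general $\widetilde\mu_\mathfrak f$ gives the stratum $C_{\mathfrak f/\mathfrak d}$ total mass $\varphi_K(\mathfrak f/\mathfrak d)/N_{K/\q}(\mathfrak f)$, distributed uniformly within that stratum; this is the normalized counting measure only when all the $U_{\mathfrak f/\mathfrak d}$ are trivial.

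The gap is therefore not repairable as stated: the constancy of $N_x$ fails for general $K$ and $\mathfrak f$. The paper's translation argument breaks at exactly the same point (a translation of $\okhat\times\kab$ preserves Haar measure, so it cannot carry $X_q$ onto $X_{q'}$ when $q,q'$ lie in strata with different values of $|U_{\mathfrak f/\mathfrak d}|$), so your proof makes explicit, rather than circumvents, the difficulty. Your fallback route through uniformity fares no better, because Lemma \ref{fibercount} fails for the same reason (for $K=\q(i)$ the fibres of $DR_{(9)}\to DR_{(3)}$ have cardinalities $9$ and $3$), so the normalized counting measures do not even form a projective system. What does survive, and what Step Two actually needs, is the projective system formed by the push-forwards $\widetilde\mu_\mathfrak f$ of $\mu_1$ described above; the KMS property and the scaling condition \eqref{scalingcond} should be verified directly for this system rather than for the counting measures.
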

\begin{proof}
We only have to show that \eqst{\widetilde \mu_\mathfrak f (q) = \widetilde \mu_\mathfrak f (q') \text{ for all }q,q' \in DR_\mathfrak f\, ,} because by definition we have $1 = \widetilde \mu_\mathfrak f (DR_\mathfrak f) = \sum_q \widetilde \mu_\mathfrak f(q)$. Recall that $\mu_1$ is defined to be the push forward of the product measure $\mu = \prod_\mathfrak p \mu_\mathfrak p \times \mu_\mathcal G$ on $\okhat \times \kab$, where the $\mu_\mathfrak p$ and $\mu_\mathcal G$ are normalized Haar measures under the natural projection $\pi : \okhat \times \kab \to Y_K$ (cf., section \ref{measure1}). It is immediate that for given $q$ and $q'$ in $\iok$, we find $m = m_{q,q'} \in \iok$ and $s = s_{q,q'} \in \kab$, such that the translate of $X _q = \pi_\mathfrak f^{-1}(\pi^{-1}(q))$ under $m$ and $s$ equals $X_{q'}$, i.e., \eqst{m X_q  s := \{ (m+\rho,s\alpha) \ | \ (\rho,\alpha) \in X_q \} = X_{q'}\,.} 
Due to translation invariance of Haar measures we can conclude $\mu(X_q)=\mu(mX_qs)=\mu(X_{q'})$ and therefore \eqst{\widetilde \mu_\mathfrak f(q)=\widetilde \mu_\mathfrak f(q')\,.}
\end{proof}

\begin{lemma}
The measure $\mu_K = \varprojlim \mu _\mathfrak f$ satisfies the scaling condition \eqref{scalingcond}.
\end{lemma}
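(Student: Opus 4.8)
The plan is to show that $\mu_K = \varprojlim \mu_\mathfrak f$, transported to $Y_K$ via the isomorphism $\Psi$ of Proposition \ref{keyprop}, coincides with $\mu_1$, and then invoke the characterization of $\mu_1$ from section \ref{measure1}. In fact the preceding two lemmas already do most of the work: the uniformity lemma produces $\mu_K$ as a genuine probability measure on $\mathrm{DR}_K$, and the last lemma shows that the push-forward of $\mu_1$ under each $\pi_\mathfrak f : Y_K \overset{\Psi}{\to} \mathrm{DR}_K \to DR_\mathfrak f$ is exactly the normalized counting measure $\mu_\mathfrak f$. Since a Borel probability measure on the profinite space $\mathrm{DR}_K = \varprojlim DR_\mathfrak f$ is determined by its push-forwards to the finite quotients $DR_\mathfrak f$ (the cylinder sets generate the Borel $\sigma$-algebra), this forces $\Psi_* \mu_1 = \mu_K$. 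Hence proving the scaling condition \eqref{scalingcond} for $\mu_K$ is literally the same as proving it for $\mu_1$ — which is part of its defining characterization recalled in section \ref{measure1}.

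So the first step I would take is to record the identification $\Psi_* \mu_1 = \mu_K$ as an immediate consequence of the previous lemma together with the projective-limit uniqueness of measures on a profinite space. Second, I would observe that $\Psi$ is $J_K$-equivariant: Proposition \ref{keyprop} gives equivariance for the monoid $\iok$ and for $\kab$, and since $J_K = \mathbb{A}_{K,f}^\times/\okhat^\times$ is generated by $\iok$ and the image of $\okhat^\natural$ inverted, the scaling action of $g \in J_K$ on $Y_K$ in \eqref{obviousaction} matches multiplication by the corresponding element of $\mathrm{DR}_K$ under $\Psi$ (using Proposition \ref{centralprop} to handle the denominators, exactly as in the equivariance computation $\Psi(s[\rho,\alpha]) = (s)\Psi([\rho,\alpha])$ already carried out). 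Third, I would transport the scaling condition: for a Borel set $Z \subset Y_K$ and $g \in J_K$ with $gZ \subset Y_K$, we have $\mu_K(\Psi(gZ)) = \mu_1(gZ)$ and $\mu_K(\Psi(Z)) = \mu_1(Z)$, and \eqref{scalingcond} for $\mu_1$ is exactly the statement $\mu_1(gZ) = \mathcal{N}_{K/\q}(g)^{-1}\mu_1(Z)$ recalled in section \ref{measure1}; this gives the claim verbatim.

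Alternatively — and this is the slightly cleaner route if one prefers not to lean on the characterization of $\mu_1$ — one can prove the scaling condition directly on $\mathrm{DR}_K$. It suffices to check it on cylinder sets, i.e. on subsets of the form $\pi_\mathfrak f^{-1}(q)$ for $q \in DR_\mathfrak f$, and for $g = \mathfrak d \in \iok$ a single integral ideal (the general $g \in J_K$ follows by writing $g = \mathfrak a \mathfrak b^{-1}$ and using that $\mathrm{Im}(\varrho_{\mathfrak b})$ carries the relevant subset). For such a set, Lemma \ref{fibercount} computes the counting measures at levels $\mathfrak f$ and $\mathfrak d\mathfrak f$ and one checks that multiplication by $\mathfrak d$, which is the injection $\varrho_\mathfrak d$ of \eqref{delta}, scales $\mu_{\mathfrak d \mathfrak f}$-mass by the factor $N_{K/\q}(\mathfrak d)^{-1}$; passing to the limit yields $\mu_K(\mathfrak d Z) = N_{K/\q}(\mathfrak d)^{-1}\mu_K(Z)$.

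The main obstacle is bookkeeping rather than conceptual: one must be careful that the scaling action of a fractional ideal on $Y_K$ really corresponds under $\Psi$ to plain multiplication in the monoid $\mathrm{DR}_K$, with the $\okhat^\times$-quotient and the reciprocity denominators $[\rho]^{-1}$ correctly accounted for — here Proposition \ref{centralprop} and Remark \ref{intersection} (the triviality of $\iok \cap \mathrm{DR}_K^\times$) are exactly what is needed. Once that compatibility is in hand, the scaling condition is inherited from either $\mu_1$ or directly from the fibre-counting of Lemma \ref{fibercount}, with no further analysis required.
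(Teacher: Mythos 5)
Your proposal is correct, but your primary route inverts the paper's logical order. The paper proves this lemma directly on $\mathrm{DR}_K$, exactly as in your \emph{alternative} route: reduce to cylinder sets and a single integral ideal $\mathfrak d$, factor the multiplication map $\mathfrak d\cdot : DR_\mathfrak f \to DR_\mathfrak f$ through the surjection $DR_\mathfrak f \twoheadrightarrow DR_{\mathfrak f/\mathfrak d}$ followed by the injection $\varrho_\mathfrak d$, and invoke Lemma \ref{fibercount} to see that the nontrivial fibres all have cardinality $N_{K/\q}(\mathfrak d)$; the scaling condition for $\mu_K$ is then a self-contained counting statement, used afterwards (together with normalization) to \emph{deduce} $\mu_K=\mu_1$ in Proposition \ref{equalmeasures}. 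Your main argument goes the other way: it first extracts $\Psi_*\mu_1=\mu_K$ from the preceding lemma via the standard uniqueness of Borel probability measures on a profinite space with prescribed finite-level push-forwards, and then transports the scaling condition from the characterization of $\mu_1$ recalled in section \ref{measure1}. This is not circular --- the lemma on $\widetilde\mu_\mathfrak f$ is proved independently from translation invariance of Haar measure --- and it is arguably the more economical organization, since it makes Proposition \ref{equalmeasures} immediate and turns the scaling lemma into a corollary; what it costs is that the scaling property is no longer verified intrinsically on $\mathrm{DR}_K$ but imported from \cite{LLN}, and it requires you to spell out the $J_K$-equivariance of $\Psi$ for general fractional ideals $g=\mathfrak a\mathfrak b^{-1}$ (Proposition \ref{keyprop} only records equivariance for $\iok$ and $\kab$), which you correctly flag as the bookkeeping point and which is handled by Proposition \ref{centralprop}. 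Both routes are sound; your second one is essentially the paper's proof.
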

\begin{proof}

Let $\mathfrak d$ and $\mathfrak f$ be in $\iok$. Without loss of generality, we can assume that $\mathfrak d$ divides $\mathfrak f$, because we are looking at the limit measure. Recall further the commutative diagram \diagram{DR_\mathfrak f \ar[r]^{\mathfrak d \cdot} \ar@{>>}[d] & DR_\mathfrak f \, .\\DR_{\mathfrak f / \mathfrak d} \ar@{^{(}->}[ur]^{\mathfrak d\cdot }} 
\noindent
In order to show that $\mu_K$ satisfies the scaling condition, it is enough to show that the cardinalities of the (non-trivial) fibers of the multiplication map $\mathfrak d \cdot : DR_\mathfrak f \to DR_\mathfrak f$ are all equal to the norm $N_{K/\q} (\mathfrak d) = |\ok / \mathfrak d|$. By the commutativity of the last diagram, we only have to show that the fibres of the natural projection $DR_\mathfrak f \to DR_{\mathfrak f / \mathfrak d}$ all have cardinality $ N _{K/\q}(\mathfrak d)$. This follows immediately from lemma \ref{fibercount}. 
\end{proof}
\noindent
As corollary of the preceding two lemma we obtain the following.
\begin{proposition}
\label{equalmeasures}
We have the equality of measures \eqst{\mu_K = \mu_1\,.}
\end{proposition}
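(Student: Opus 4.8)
The plan is to deduce $\mu_K = \mu_1$ directly from the two lemmas just established, using the characterization of $\mu_1$ recalled in section \ref{measure1}. Recall that $\mu_1$ is the unique probability measure on $Y_K$ satisfying $\mu_1(Y_K) = 1$ together with the scaling condition \eqref{scalingcond}. Under the identification $\Psi : Y_K \overset{\cong}\longrightarrow \mathrm{DR}_K$ of proposition \ref{keyprop}, we may regard $\mu_K = \varprojlim \mu_\mathfrak f$ as a measure on $Y_K$, and it suffices to verify that $\mu_K$ meets this characterization. The total mass condition $\mu_K(Y_K) = 1$ is immediate, since each $\mu_\mathfrak f$ is the normalized counting measure on $DR_\mathfrak f$ and hence has total mass $1$; the projective limit of probability measures is a probability measure. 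The scaling condition for $\mu_K$ is exactly the content of the preceding lemma. Therefore $\mu_K$ satisfies both defining properties of $\mu_1$, and by uniqueness $\mu_K = \mu_1$.

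Alternatively — and this is the route I would actually spell out, since it avoids invoking the uniqueness statement and instead gives the identification levelwise — one combines the two lemmas above as follows. The second-to-last lemma shows that the push-forward $\widetilde\mu_\mathfrak f$ of $\mu_1$ along the projection $\pi_\mathfrak f : Y_K \overset\Psi\longrightarrow \mathrm{DR}_K \longrightarrow DR_\mathfrak f$ is the normalized counting measure on $DR_\mathfrak f$, i.e.\ $\widetilde\mu_\mathfrak f = \mu_\mathfrak f$ for every $\mathfrak f \in \iok$. On the other hand, by construction $\mu_K = \varprojlim_\mathfrak f \mu_\mathfrak f$ is the unique measure on $\mathrm{DR}_K = \varprojlim_\mathfrak f DR_\mathfrak f$ whose push-forward along the canonical projection $\mathrm{DR}_K \to DR_\mathfrak f$ is $\mu_\mathfrak f$ for all $\mathfrak f$ (this is the Prokhorov/Kolmogorov extension, which is well-defined precisely because $\mathcal E_K$ is uniform, as shown in the corollary above). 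Since $\Psi_*\mu_1$ has the same property — its push-forward to each $DR_\mathfrak f$ equals $\widetilde\mu_\mathfrak f = \mu_\mathfrak f$ — the uniqueness of the projective limit measure on a profinite space forces $\Psi_*\mu_1 = \mu_K$, which under the identification $\Psi$ reads $\mu_K = \mu_1$.

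The only point requiring a little care — and the step I expect to be the main (minor) obstacle — is the bookkeeping about which measures live on which spaces: $\mu_1$ is defined on $Y_K$, $\mu_K$ on $\mathrm{DR}_K$, and the comparison only makes sense after transporting one along the homeomorphism $\Psi$. One should also note that the cylinder sets $\pi_\mathfrak f^{-1}(q)$, $q \in DR_\mathfrak f$, $\mathfrak f \in \iok$, generate the Borel $\sigma$-algebra of the profinite space $\mathrm{DR}_K$, so that agreement of two probability measures on all such cylinders (which is what the two lemmas give) already implies their equality; this is the standard uniqueness-of-extension argument for projective limits of measures on compact totally disconnected spaces and needs no new input. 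Everything else is a direct citation of the lemmas immediately preceding the proposition, so the proof is genuinely a two-line corollary.

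\begin{proof}
By the two preceding lemmas, the push-forward $\widetilde\mu_\mathfrak f$ of $\mu_1$ under $Y_K \overset\Psi\longrightarrow \mathrm{DR}_K \longrightarrow DR_\mathfrak f$ equals the normalized counting measure $\mu_\mathfrak f$ on $DR_\mathfrak f$, for every $\mathfrak f \in \iok$, and $\mu_K = \varprojlim_\mathfrak f \mu_\mathfrak f$ satisfies the scaling condition \eqref{scalingcond}. Since the cylinder sets $\pi_\mathfrak f^{-1}(q)$ ($q \in DR_\mathfrak f$, $\mathfrak f \in \iok$) generate the Borel $\sigma$-algebra of the profinite space $\mathrm{DR}_K$, a measure on $\mathrm{DR}_K$ is determined by its push-forwards to all the finite quotients $DR_\mathfrak f$. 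Transporting $\mu_1$ along the homeomorphism $\Psi$ of proposition \ref{keyprop}, the measure $\Psi_*\mu_1$ has the same push-forward $\mu_\mathfrak f$ to each $DR_\mathfrak f$ as $\mu_K$ does, hence $\Psi_*\mu_1 = \mu_K$; equivalently $\mu_K = \mu_1$.
\end{proof}
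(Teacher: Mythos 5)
Your proposal is correct. In fact your first paragraph is verbatim the paper's own argument: the paper's proof consists of the single observation that $\mu_K$ satisfies the two defining properties of $\mu_1$ from section \ref{measure1} (total mass $1$ and the scaling condition \eqref{scalingcond}), the latter being the content of the immediately preceding lemma. The proof you actually write out takes the slightly different of the two routes you describe: instead of invoking the uniqueness characterization of $\mu_1$ via the scaling condition, you use the other preceding lemma (that the push-forward $\widetilde\mu_\mathfrak f$ of $\mu_1$ to each $DR_\mathfrak f$ is the normalized counting measure) together with the standard fact that a Borel probability measure on a profinite space is determined by its push-forwards to the finite quotients. Both routes are legitimate and both use only lemmas the paper has already established (the paper itself introduces the proposition ``as corollary of the preceding two lemmas''); your cylinder-set version has the small advantage of not needing the uniqueness statement for $\mu_1$ from \cite{LLN}, only the explicit product-measure description, while the paper's version makes the scaling condition (which is what actually drives the Tomita--Takesaki time evolution) the visible input. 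Your remark about transporting measures along $\Psi$ is the right piece of bookkeeping and is left implicit in the paper.
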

\begin{proof}
We have seen that $\mu _K$ satisfies the two defining properties of $\mu_1$ (cf., section \ref{measure1}).
\end{proof}

\begin{corollary}
The procedure described in \ref{ttcon} defines a time evolution $(\sigma_t)_{t\in \real}$ on $\mathcal E _K^{an}$, and the resulting measured analytic endomotive $\mathcal E ^{mean} = (\mathcal E_K^{an},(\sigma_t)_{t \in \real})$ is naturally isomorphic to $\mathcal A _K$, via $\Psi$.
\end{corollary}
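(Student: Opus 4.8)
The plan is to assemble the final corollary from the two ingredients already built up in Step One and Step Two. From Step One we have the isomorphism $\Psi : A_K = C(Y_K) \rtimes \iok \overset\cong\longrightarrow \mathcal E_K^{an} = C(\mathrm{DR}_K)\rtimes \iok$ of $C^*$-algebras, which is $\iok$- and $\kab$-equivariant. From the uniformity corollary we know that the normalized counting measures $\mu_\mathfrak f$ on $DR_\mathfrak f$ form a compatible projective system, hence define the Prokhorov extension $\mu_K = \varprojlim \mu_\mathfrak f$ on $\mathrm{DR}_K$. By Proposition \ref{equalmeasures} we have $\mu_K = \mu_1$ under the identification $\Psi$, where $\mu_1$ is the measure on $Y_K$ corresponding to the unique $KMS_1$-state of $\mathcal A_K$ (section \ref{measure1}).

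First I would invoke the Tomita--Takesaki machinery recalled in section \ref{ttcon}: the state $\varphi = \varphi_{\mu_K}$ on $\mathcal E_K^{an}$ defined by $\varphi(fu_s) = \delta_{s,1}\int_{\mathrm{DR}_K} f\, d\mu_K$ gives, via GNS, a representation $\pi_\varphi$ on $\mathcal H_\varphi$ and a von Neumann algebra $\mathcal M_\varphi$ with modular automorphism group $\sigma^\varphi$. The two technical assumptions from section \ref{ttcon} — that $\pi_\varphi$ is faithful and that $\sigma^\varphi$ preserves the $C^*$-subalgebra $C(\mathrm{DR}_K)\rtimes\iok$ — need to be checked, and here the cleanest route is to transport the question to $\mathcal A_K$ via $\Psi$: since $\mu_K = \mu_1$, the state $\varphi_{\mu_K}$ corresponds under $\Psi$ to the unique $KMS_1$-state of $\mathcal A_K$ for the time evolution $\sigma_t(fu_s) = N_{K/\q}(s)^{it}fu_s$, and for that system the GNS representation is known (from \cite{LLN}) to be faithful with the modular group given precisely by this $\sigma_t$, which manifestly fixes $C(Y_K)$ and scales the $u_s$. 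Hence the assumptions hold and $\sigma^\varphi = \Psi_* \circ \sigma_t \circ \Psi_*^{-1}$; equivalently, by the uniqueness of the time evolution associated to a $KMS_1$-state on this kind of algebra (chapter 4, section 4.1 of \cite{ConMar}), the modular group of $\varphi_{\mu_K}$ must be the one for which $\varphi_{\mu_K}$ is $KMS_1$, namely $\sigma_t$ pulled over by $\Psi$.

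Concretely I would argue: $\Psi$ intertwines $\varphi_{\mu_1}$ on $A_K$ with $\varphi_{\mu_K}$ on $\mathcal E_K^{an}$ because $\Psi_*\mu_1 = \mu_K$ and $\Psi$ sends the isometry $u_s$ of $\mathcal A_K$ to the isometry $u_s$ of $\mathcal E_K^{an}$ (both implementing the action of $s\in\iok$). Therefore the GNS data are unitarily equivalent, $\pi_{\varphi_{\mu_K}} = \Psi_*\circ\pi_{\varphi_{\mu_1}}$ up to a unitary, and the modular automorphism groups correspond under $\Psi$. Since the modular group of the $KMS_1$-state $\varphi_{\mu_1}$ on $A_K$ is $\sigma_t$, the procedure of section \ref{ttcon} applied to $\mathcal E_K$ yields exactly $\Psi_*\sigma_t\Psi_*^{-1}$, which is a well-defined time evolution on $\mathcal E_K^{an}$ restricting to an automorphism group of the $C^*$-algebra. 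This gives the measured analytic endomotive $\mathcal E^{mean} = (\mathcal E_K^{an}, \sigma^\varphi)$, and $\Psi$ is by construction an isomorphism of $C^*$-dynamical systems $\mathcal A_K \overset\cong\longrightarrow \mathcal E^{mean}$, also equivariant for the $\kab$-symmetry by Proposition \ref{keyprop}.

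The main obstacle is the verification of the technical Tomita--Takesaki hypotheses (faithfulness of $\pi_\varphi$ and invariance of the $C^*$-algebra under $\sigma^\varphi$): these are genuinely needed for "$\mathcal E^{mean}$ exists" and are not formal. I expect to handle them entirely by transport of structure through $\Psi$, leaning on the detailed analysis of $KMS$-states and the modular theory of $\mathcal A_K$ in \cite{LLN} rather than re-deriving anything intrinsically on the endomotive side; the remaining bookkeeping — that $\Psi$ respects the time evolutions once both are identified with $\sigma_t$, and respects the Galois symmetry — is routine given Proposition \ref{keyprop} and Proposition \ref{equalmeasures}.
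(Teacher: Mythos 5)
Your proposal is correct and follows essentially the same route as the paper: establish uniformity so that $\mu_K$ exists, invoke Proposition \ref{equalmeasures} to identify $\mu_K$ with $\mu_1$ under $\Psi$, and then use the fact that a $KMS_1$-state determines its time evolution uniquely (via Tomita--Takesaki, with the technical hypotheses handled by transporting the known modular theory of $\mathcal A_K$ from \cite{LLN} through $\Psi$) to conclude that $\sigma^\varphi$ is the BC time evolution pulled over by $\Psi$. The only difference is one of explicitness: you spell out the transport-of-structure verification of faithfulness and $C^*$-invariance that the paper compresses into ``standard arguments in Tomita--Takesaki theory.''
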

\noindent
Next we will show that $\mathcal E _K=E_K \rtimes \iok$ provides $\mathcal A _K$ with an arithmetic subalgebra.
This follows in fact directly from the construction.

\begin{theorem}
For all number fields $K$ the BC-systems $\mathcal A _K$ (resp. $\mathcal E _K^{mean}$) posses an arithmetic model with arithmetic subalgebra given by the algebraic endomotive $\mathcal E _K = E_K \rtimes I_K$. 
\end{theorem}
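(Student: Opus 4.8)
The plan is to verify the three additional properties (v), (vi) and (vii) that characterize an arithmetic model, using the explicit description $E_\mathfrak f \cong \prod_{\mathfrak d \mid \mathfrak f} K_{\mathfrak f/\mathfrak d}$ from \eqref{arithalg} together with the identification $\Psi$ of Proposition \ref{keyprop} which lets us work interchangeably with $Y_K$ and $\mathrm{DR}_K$, and with the classification of extremal $KMS_\infty$-states from section \ref{kmsinfty}. First I would recall that under $\Psi$ the extremal $KMS_\infty$-states $\varphi_\omega$ of $\mathcal A_K$ correspond, for $\omega \in Y_K^\times = \okhat^\times \times_{\okhat^\times}\kab$, to the characters of $\mathrm{DR}_K^\times \cong \kab$ (see \eqref{prop3}), i.e.\ to points of $\mathrm{Hom}_{K\text{-alg}}(E_K, K^{ab})$ landing in the invertible part; concretely $\varphi_\omega(fu_s) = \delta_{s,1} f(\omega)$, so for $f \in E_K \subset C(\mathrm{DR}_K)$ we get $\varphi_\omega(f) = \mathrm{ev}_\omega(f) = \chi_\omega(f)$ where $\chi_\omega : E_K \to K^{ab}$ is the corresponding $K$-algebra homomorphism.

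For property (v), the point is that each level algebra $E_\mathfrak f$ is, by \eqref{arithalg}, a finite product of strict ray class fields $K_{\mathfrak f/\mathfrak d} \subset K^{ab}$; hence $E_K = \varinjlim_\mathfrak f E_\mathfrak f$ has all its $K$-algebra homomorphisms into $\overline K$ actually landing in $K^{ab}$, and by \eqref{abelian} the invertible characters are exactly the ones appearing as $\varphi_\omega$. Thus $\varphi_\omega(E_K) \subset K^{ab}$, and since the union of the $K_\mathfrak f$ over all $\mathfrak f$ generates $K^{ab}$ over $K$, running over all $\omega$ one recovers all of $K^{ab}$. For property (vi), I would use the Galois-compatibility statement already recorded in the excerpt from Prop.~4.29 of \cite{ConMar}: for a pure state $\varphi$ coming from $C(\mathcal X)$, any $a \in A = E_K$, and $\nu \in \mathrm{Gal}(\overline K/K)$, one has $\nu(\varphi(a)) = \varphi(\nu^{-1}(a))$; combined with the fact (Prop.~4.30 of \cite{ConMar}, quoted above) that since the $E_\mathfrak f$ are products of abelian normal extensions the action descends to $\kab$, and with the identification of the $\kab$-action on $\mathcal E_K^{an}$ with the symmetry action on $\mathcal A_K$ under $\Psi$ (Proposition \ref{keyprop}), this gives exactly ${}^{\nu}\hspace{-0.6mm}\varrho(f) = \nu^{-1}(\varrho(f))$ for $f \in E_K$, and hence for the whole crossed product $\mathcal E_K$ since the symmetries fix the $U_{\rho_\mathfrak d}$ and $U_{\rho_\mathfrak d}^*$. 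Property (vii) is the density statement: $\mathcal E_K \otimes_K \complex$ is dense in $A_K$. For this I would argue that $E_K \otimes_K \complex$ is dense in $C(\mathrm{DR}_K)$ — each $E_\mathfrak f \otimes_K \complex \cong \complex^{DR_\mathfrak f}$ is already all continuous functions on the finite set $DR_\mathfrak f$, so the union over $\mathfrak f$ is dense in $C(\varprojlim_\mathfrak f DR_\mathfrak f) = C(\mathrm{DR}_K)$ by Stone-Weierstrass — and then adjoining the isometries $u_{\rho_\mathfrak d}$ (which already lie in $\mathcal E_K$ and generate the crossed product together with $C(\mathrm{DR}_K)$) gives density of $\mathcal E_K \otimes_K \complex$ in $C(\mathrm{DR}_K)\rtimes \iok = \mathcal E_K^{an} \cong A_K$.

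The main obstacle, and the step deserving the most care, is the bookkeeping around property (vi): one must be careful that the $\kab$-action defined abstractly on $\mathcal E_K^{an}$ via the Galois action on $\mathrm{Hom}(E_K,\overline K) = \mathcal X$ really is transported by $\Psi$ to the symmetry action $\gamma\cdot[\rho,\alpha] = [\rho,\gamma\alpha]$ on $Y_K$ used to define ${}^{\nu}\hspace{-0.6mm}\varrho$ in property (vi), and that the sign/inverse conventions (the $\alpha^{-1}$ appearing in $\Psi$, the $\nu^{-1}$ in the compatibility relation, and the inversion in $\iota(s) = [s]^{-1}$) are all consistent; tracing these through Proposition \ref{centralprop} and \eqref{prop3} is routine but the place where an error would hide. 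Everything else is essentially a direct reading-off from the structure results \eqref{arithalg}, \eqref{abelian}, the classification in section \ref{kmsinfty}, and the general endomotive formalism of section \ref{endomotives}.
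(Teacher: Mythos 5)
Your proposal is correct and follows essentially the same route as the paper: identify the extremal $KMS_\infty$-states with points of $\mathrm{DR}_K^\times\cong\kab\subset\mathrm{Hom}_{K\text{-alg}}(E_K,K^{ab})$, read off property (v) from the description \eqref{arithalg} of the $E_\mathfrak f$ as products of strict ray class fields, and obtain (vi) by the evaluation/pull-back computation (the paper does this as a one-line direct calculation rather than citing Prop.~4.29--4.30 of \cite{ConMar}, but the content is identical). The only difference is that you explicitly verify the density property (vii) via Stone--Weierstrass, which the paper leaves implicit as part of the general endomotive formalism; this is a harmless (and arguably welcome) addition.
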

\begin{proof}
Recall from section \ref{kmsinfty} that extremal $KMS_\infty$-states are indexed by $\kab \overset{\eqref{prop3}}\cong \mathrm{DR}_K^\times \subset \mathrm{Hom}_{K\text{-}\mathrm{alg}}(E_K, \overline K)$, i.e., an extremal $KMS_\infty$-state $\varrho_\omega$, for $\omega \in \mathrm{DR}_K^\times$, is given on a function $f \in C(\mathrm{DR}_K)$ simply by \eqst{\varrho_\omega (f) = f(\omega)\,.}
Now, if we take an element $\mathrm{ev}_a \in E_K \subset C(\mathrm{DR}_K)$, which was defined by $\mathrm{ev}_a : g \in \mathrm{Hom}_{K\text{-}\mathrm{alg}}(E_K,\overline K) = \mathrm{Hom}_{K\text{-}\mathrm{alg}}(E_K,K^{ab}) \mapsto g(a) \in K^{ab}$ (see \eqref{abelian}), we find that \eqst{\varrho_\omega(\mathrm{ev}_a) = \mathrm{ev}_a(\omega)=\omega(a) \in K^{ab}\, ,} and this shows, together with the definition of $E_K$, that property (v) from the list of axioms of a Bost-Connes system is valid. In order to show property (vi), we take a symmetry $\nu \in \kab$ and simply calculate \eqst{{}^{\nu}\hspace{-0.6mm}\varrho_\omega(\mathrm{ev}_a) = \varrho_\omega({}^{\nu}\hspace{-0.6mm} \mathrm{ev}_a)={}^{\nu}\hspace{-0.6mm}\mathrm{ev}_a(\omega)=\mathrm{ev}_a(\nu^{-1}\circ\omega) = \nu^{-1}(\omega(a))=\nu^{-1}(\varrho_\omega(\mathrm{ev}_a))\,.}
\end{proof}

\section{Outlook} \noindent
We would like to state some questions and problems which might be interesting for further research. 
\begin{itemize}
\item It might be interesting to use integral models $A_{\ok}$ of our BC-systems $\mathcal A _K$ (by using integral models of our arithmetic subalgebras), as done in \cite{fun} in the case of the classical BC-system for $K=\q$, to investigate whether general BC-systems can be defined over $\mathbb F _1$ or some (finite) extensions of $\mathbb F_1$.\smallskip
\item In a recent preprint \cite{CCpadic} Connes and Consani construct $p$-adic representations of the classical Bost-Connes system $\mathcal A _\q$ using its integral model $A_\ganz$. One of the main tools is thereby the classical Witt functor which attaches to a ring its ring of Witt vectors. Borger \cite{BorgWitt} has introduced a more general framework of Witt functors which are compatible with our arithmetic subalgebras. It might be interesting to construct analogous $p$-adic representations of general Bost-Connes systems. \smallskip
\item In particular, Connes and Consani \cite{CCpadic} recover $p$-adic $L$-functions in the p-adic representations of $\mathcal A _\q$. Using the results of \cite{DelRib}, it would be interesting to try to recover $p$-adic $L$-functions of totally real number fields in the $p$-adic representations of BC-systems of totally real number fields. \smallskip
\item On the other hand, it seems interesting to ask whether p-adic BC-systems are related to Lubin-Tate theory\footnote{In a forthcoming paper we will deal with these questions.}. \smallskip
\end{itemize}

\bigskip

\section{On uniqueness of arithmetic models. Appendix by Sergey Neshveyev}
\label{sergey} \noindent
The goal of this appendix is to show that the endomotive $\mathcal E_K$ constructed in the paper is, in an appropriate sense, the unique endomotive that provides an arithmetic model for the BC-system $\mathcal A_K$. We will also give an alternative proof of the existence of $\mathcal E_K$.\\ \\
Assume $\mathcal E=E\rtimes S$ is an algebraic endomotive such that the analytic endomotive $\mathcal E^{an}$ is $A_K=C(Y_K)\rtimes I_K$. By this we mean that $S=I_K$ and there exists a $Gal(\overline K/K)$- and $I_K$-equivariant homeomorphism of $\mathrm{Hom}_{K\text{-alg}}(E,\overline K)$ onto $Y_K=\widehat\OO_K\times_{\ohs_K}Gal(K^{ab}/K)$. Then $E$ considered as a $K$-subalgebra of $C(Y_K)$ has the following properties:
\begin{itemize}
\item[(a)] every function in $E$ is locally constant;
\item[(b)] $E$ separates points of $Y_K$;
\item[(c)] $E$ contains the idempotents $\rho^n_\aaa(1)$ for all $\aaa\in I_K$ and $n\in\N$;
\item[(d)] for every $f\in E$ we have $f(Y_K)\subset K^{ab}$ and the map $f\colon Y_K\to K^{ab}$ is $Gal(K^{ab}/K)$-equivariant.
\end{itemize}
Recall that the endomorphism $\rho_\aaa$ is defined by $\rho_\aaa(f)=f(\aaa^{-1}\cdot)$, with the convention that $\rho_\aaa(f)(y)=0$ if $y\notin \aaa Y_K$.

\begin{theorem}
\label{SergeyA}
The subalgebra $E_K=\varinjlim E_\mathfrak f$ of $C(Y_K)$ constructed in the paper is the unique $K$-subalgebra of~$C(Y_K)$ with properties (a)-(d). It is, therefore, the $K$-algebra of all locally constant $\kab$-valued $Gal(\kab/K)$-equivariant functions on $Y_K$.
\end{theorem}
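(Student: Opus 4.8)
I want to show that $E_K = \varinjlim E_\mathfrak f$, viewed inside $C(Y_K)$, coincides with the algebra $B$ of all locally constant $K^{ab}$-valued $\mathrm{Gal}(K^{ab}/K)$-equivariant functions on $Y_K$, and that any $K$-subalgebra satisfying (a)--(d) equals this $B$. The containment $E_K \subseteq B$ is clear from properties (a)--(d), which $E_K$ satisfies by construction (local constancy because each $E_\mathfrak f$ factors through the finite monoid $DR_\mathfrak f$; the equivariance and $K^{ab}$-valuedness from \eqref{abelian} and the Galois compatibility recalled in the section on the analytic flavour). So the real content is the reverse inclusion, and more precisely that \emph{any} $K$-subalgebra $E$ with (a)--(d) already contains $B$, hence equals it; applying this to $E_K$ itself then gives $E_K = B$ and uniqueness simultaneously.

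\textbf{Reduction to finite level.} Since every $f \in B$ is locally constant on the compact space $Y_K$, it factors through some finite quotient; using $Y_K \cong \varprojlim Y_{K,\mathfrak f}$ (via $\Psi$ and Proposition \ref{keyprop}, $Y_{K,\mathfrak f} \cong DR_\mathfrak f$), we may assume $f$ is pulled back from a $\mathrm{Gal}(K^{ab}/K)$-equivariant function $DR_\mathfrak f \to K^{ab}$ for some conductor $\mathfrak f$. Here $\mathrm{Gal}(K^{ab}/K)$ acts on $DR_\mathfrak f$ through its quotient $C_\mathfrak f = DR_\mathfrak f^\times$. The decomposition $DR_\mathfrak f \cong \coprod_{\mathfrak d \mid \mathfrak f} C_{\mathfrak f/\mathfrak d}$ of \eqref{prop2} breaks $DR_\mathfrak f$ into $C_\mathfrak f$-orbits (each $C_{\mathfrak f/\mathfrak d}$ being a single transitive $C_\mathfrak f$-set, a torsor under $C_{\mathfrak f/\mathfrak d}$), so a $\mathrm{Gal}$-equivariant function on $DR_\mathfrak f$ is the same as a tuple of $\mathrm{Gal}$-equivariant functions $C_{\mathfrak f/\mathfrak d} \to K^{ab}$, $\mathfrak d \mid \mathfrak f$. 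On a torsor under $C_{\mathfrak f/\mathfrak d} \cong \mathrm{Gal}(K_{\mathfrak f/\mathfrak d}/K)$, the equivariant $K^{ab}$-valued functions are exactly the $K$-linear span of the coordinate functions of $K_{\mathfrak f/\mathfrak d}$ — this is the standard fact that $\mathrm{Hom}_{K\text{-alg}}(L, K^{ab})$-equivariant functions to $K^{ab}$, for $L/K$ finite abelian, recover $L$ itself (Galois descent / the normal basis theorem). Thus $B$ at level $\mathfrak f$ is precisely $E_\mathfrak f = \prod_{\mathfrak d \mid \mathfrak f} K_{\mathfrak f/\mathfrak d}$ of \eqref{arithalg}, and taking the union over $\mathfrak f$ gives $B = E_K$.

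\textbf{Showing an arbitrary $E$ with (a)--(d) contains $B$.} This is where properties (b) and (c) enter, and I expect it to be the main obstacle. The idempotents $\rho_\mathfrak a^n(1)$ from (c) are the characteristic functions of the closed subsets $\mathfrak a^n Y_K \subset Y_K$; by the decomposition machinery of Section \ref{projsec} (the projections $\pi_\mathfrak d$, $\mathrm{DR}_K = \mathrm{Im}(\varrho_\mathfrak d)\sqcup \mathrm{Im}(\varrho_\mathfrak d)^c$) these, together with products and $K$-linear combinations, generate the characteristic functions of all the pieces $C_{\mathfrak f/\mathfrak d}$ in \eqref{prop2} — so $E$ contains enough idempotents to reduce, as above, to a single torsor $C_{\mathfrak f/\mathfrak d}$. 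On such a torsor, property (b) says the functions in $E$ separate the finitely many points, i.e. the corresponding $K$-subalgebra of $\mathrm{Fun}(C_{\mathfrak f/\mathfrak d}, K^{ab})$ is a separating, $\mathrm{Gal}$-stable, $K$-valued-on-the-diagonal subalgebra; one then argues that a $\mathrm{Gal}(K_{\mathfrak f/\mathfrak d}/K)$-equivariant $K$-subalgebra of the function algebra on a torsor that separates points must be the whole algebra of equivariant functions (again essentially the normal basis theorem plus a Galois-descent/Artin-linear-independence argument). Carrying this out cleanly — in particular getting the idempotents in (c) to cut $Y_K$ into exactly the orbit pieces, and handling the interaction between the two infinite families $\{\mathfrak p^n\}$ — is the delicate part; everything else is bookkeeping with \eqref{prop2}, \eqref{arithalg}, and the correspondence of Corollary \ref{ilggc}.
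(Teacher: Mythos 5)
Your overall architecture is sound and its ingredients match the paper's: the idempotents from (c) really are what cut out the orbit pieces (the characteristic function of the preimage of $\mathfrak d\, C_{\mathfrak f/\mathfrak d}\subset DR_\mathfrak f$ is a finite product of $\rho_{\pp_v}^{n_v}(1)-\rho_{\pp_v}^{n_v+1}(1)$ and $\rho_{\pp_v}^{n_v}(1)$), and your Galois-theoretic core --- a point-separating, $Gal$-stable $K$-algebra of equivariant functions must evaluate onto the full fixed field of the stabilizer, else a nontrivial element of $Gal(L/L')$ moves a point undetected by $E$ --- is exactly the paper's Lemma~\ref{lgen}. Your second paragraph, identifying the algebra of all locally constant equivariant functions with $\varinjlim E_\mathfrak f$ via \eqref{prop2} and \eqref{arithalg}, is also correct and gives the ``therefore'' clause of the statement.

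The gap is in your last step. Having cut down to the preimage $T\subset Y_K$ of a single piece $\mathfrak d\,C_{\mathfrak f/\mathfrak d}$, you invoke ``the corresponding $K$-subalgebra of $\mathrm{Fun}(C_{\mathfrak f/\mathfrak d},K^{ab})$'' and say that $E$ ``separates the finitely many points'' of the torsor. But $T$ is an infinite profinite set, and an arbitrary $E$ satisfying (a)--(d) contains functions locally constant at arbitrarily fine conductors; its restriction to $T$ need not factor through the finite quotient $C_{\mathfrak f/\mathfrak d}$, so the ``corresponding subalgebra'' is not defined, and it may well happen that the elements of $E$ which \emph{do} factor through level $\mathfrak f$ fail to separate the points of $C_{\mathfrak f/\mathfrak d}$. (Relatedly, the stabilizer of $y=[\rho,\alpha]\in T$ is typically trivial, e.g.\ for $\rho\in\ohs_K$, so the field produced by the evaluation lemma is $K^{ab}$, not $K_{\mathfrak f/\mathfrak d}$.) What is missing is the passage from the pointwise statement ``there is $h\in E$ with $h(y)=f(y)$'' to $f\in E$: one must use local constancy of $f$ and $h$ to get agreement on a neighbourhood $W$ of $y$, then equivariance to get agreement on the saturation $Gal(K^{ab}/K)W$, with $W$ chosen of product form so that the characteristic function $p$ of the saturation is one of the idempotents supplied by (c), whence $fp=hp\in E$; finitely many such saturated sets cover $Y_K$ by compactness, and an inclusion--exclusion with these idempotents yields $f\in E$. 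This local-constancy-plus-compactness mechanism is the actual way the infinitely many levels of $E$ are tamed, and it is absent from your sketch; by contrast, the point you flag as delicate (the interaction of the families $\{\pp^n\}$) involves only a finite product at each fixed conductor and causes no difficulty.
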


\bp We have to show that if a $K$-subalgebra $E\subset C(Y_K)$ satisfies properties (a)-(d), then it contains every locally constant $K^{ab}$-valued $Gal(K^{ab}/K)$-equivariant function $f$.\\
Fix a point $y\in Y_K$. Let $L\subset \kab$ be the field of elements fixed by the stabilizer $G_y$ of $y$ in $Gal(K^{ab}/K)$. Then $f(y)\in L$ by equivariance.
\begin{lemma} \label{lgen}
The map $E\ni h\mapsto h(y)\in L$ is surjective.
\end{lemma}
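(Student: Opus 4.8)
The plan is to show that the image of the evaluation map $E\ni h\mapsto h(y)$ is a subfield of $L$ containing enough elements to force equality, using Galois theory together with properties (b), (c), (d). First I would observe that since $E$ is a $K$-algebra and evaluation at $y$ is a $K$-algebra homomorphism, the image $F=\{h(y):h\in E\}$ is a $K$-subalgebra of $L$; since $L/K$ is a finite extension (because $G_y$ is open, as it is the stabilizer of a point under a continuous action with locally constant separating functions) any $K$-subalgebra of the field $L$ is in fact a subfield. So it suffices to prove $F=L$, and by Galois theory this is equivalent to showing that every $\tau\in Gal(\kab/K)$ fixing $F$ pointwise already lies in $G_y$, i.e. fixes $y$.

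So suppose $\tau\in Gal(\kab/K)$ satisfies $\tau(h(y))=h(y)$ for all $h\in E$. Using the equivariance property (d), $h(\tau y)=\tau(h(y))=h(y)$ for every $h\in E$. Now property (b) says $E$ separates the points of $Y_K$, hence $\tau y=y$, i.e. $\tau\in G_y$. This shows $F^{\,Gal(\kab/K)/\text{something}}$... more precisely that the subgroup of $Gal(\kab/K)$ fixing $F$ is contained in $G_y$, whence $L=\kab^{G_y}\subseteq F$, and combined with $F\subseteq L$ we get $F=L$, which is the surjectivity claim.

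The one point that needs a little care — and which I expect to be the main obstacle — is justifying that $L/K$ is finite and, relatedly, that the image $F$ is closed under the relevant field operations so that the Galois-theoretic dictionary applies cleanly; this is where properties (a) and (c) enter. Local constancy of the functions in $E$ together with the fact (from the main text) that $Y_K=\varprojlim Y_{K,\mathfrak f}$ with each $Y_{K,\mathfrak f}$ finite means that any $h\in E$ factors through some finite quotient $Y_{K,\mathfrak f}$, and the values $h(y)$ then lie in the corresponding strict ray class field $K_\mathfrak f$, which is finite over $K$; hence $G_y$ contains $Gal(\kab/K_\mathfrak f)$ for suitable $\mathfrak f$ once we use (b) to separate $y$ from the finitely many other points in its fibre, so $G_y$ is open and $L\subset K_\mathfrak f$ is finite over $K$. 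With that in hand the argument above is complete. One should also note that (c) guarantees $E$ is large enough that the separation in (b) is genuinely achievable by honest elements of $E$ rather than merely by the ambient $C(Y_K)$, but since (b) is assumed outright this is automatic.
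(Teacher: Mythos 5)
Your core argument is the same as the paper's: by equivariance (d), any $\tau\in Gal(K^{ab}/K)$ fixing the image $F=\{h(y):h\in E\}$ pointwise satisfies $h(\tau y)=\tau(h(y))=h(y)$ for all $h\in E$, so $\tau y=y$ by the separation property (b), i.e.\ $\tau\in G_y$, whence $F\supseteq L$ by the Galois correspondence. The paper runs the identical argument contrapositively (if $F\neq L$, lift a nontrivial element of $Gal(L/F)$ to $g\in Gal(K^{ab}/K)$; then $gy\neq y$ yet no $h\in E$ distinguishes $gy$ from $y$, contradicting (b)).

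One auxiliary justification in your write-up is false, although the statement it is meant to support is true. You claim $G_y$ is open and hence $L/K$ finite; this fails already for $y\in Y_K^\times$, where $Gal(K^{ab}/K)$ acts freely, so $G_y$ is trivial and $L=K^{ab}$. (Stabilizers under a continuous action on a profinite space are closed, not open; knowing that $G_y$ stabilizes the image of $y$ in each finite quotient $Y_{K,\mathfrak f}$ does not let you pin $G_y$ down at any finite level.) Fortunately finiteness is not needed: $F$ is a $K$-subalgebra of $K^{ab}$ all of whose elements are algebraic over $K$, and any such subalgebra of a field is itself a field, since the inverse of a nonzero algebraic element is a polynomial in that element. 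With that repair your proof is complete and coincides with the paper's.
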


\bp Let $L'$ be the image of $E$ under the map $h\mapsto h(y)$. Since $E$ is a $K$-algebra, $L'$ is a subfield of $L$. If $L'\ne L$ then there exists a nontrivial element of $Gal(L/L')\subset Gal(L/K)=Gal(K^{ab}/K)/G_y$.
Lift this element to an element $g$ of $Gal(K^{ab}/K)$. Then, on the one hand, $gy\ne\nolinebreak y$, and, on the other hand, for every $h\in E$ we have $h(gy)=gh(y)=h(y)$. This contradicts property~(b).
\ep

Therefore there exists $h\in E$ such that $h(y)=f(y)$. Since the functions $f$ and $h$ are locally constant, there exists a neighbourhood $W$ of $y$ such that $f$ and $h$ coincide on $W$. We may assume that $W$ is the image of an open set of the form
$$
\left(\prod_{v\in F}W_v\times\widehat\OO_{K,F}\right)\times W'\subset\widehat\OO_K\times Gal(K^{ab}/K)
$$
in $Y_K$, where $F$ is a finite set of finite places of $K$; here we use the notation $\widehat\OO_K=\prod_{v\in V_{K,f}}\OO_{K,v}$, $\widehat\OO_{K,F}=\prod_{v\in V_{K,f}\setminus F}\OO_{K,v}$. Furthermore, we may assume that $F=F'\sqcup F''$ and for $v\in F'$ we have $W_v\subset\pp_v^{n_v}\OO^\times_{K,v}$, while for $v\in F''$ we have $W_v=\pp_v^{n_v}\OO_{K,v}$. Since the functions $f$ and $h$ are equivariant, they coincide on the set $U=Gal(K^{ab}/K)W$. The equality
$$
Gal(K^{ab}/K)W=\left(\prod_{v\in F'}\pp_v^{n_v}\OO^\times_{K,v}\times\prod_{v\in F''}\pp_v^{n_v}\OO_{K,v}\times\widehat\OO_{K,F}\right)\times_{\ohs_K} Gal(K^{ab}/K)
$$
shows that the characteristic function $p$ of $U$ belongs to $E$: it is the product of $\rho_{\pp_v}^{n_v}(1)-\rho_{\pp_v}^{n_v+1}(1)$, $v\in F'$, and $\rho_{\pp_v}^{n_v}(1)$, $v\in F''$. Therefore $fp=hp\in E$.\\
Thus we have proved that for every point $y\in Y_K$ there exists a neighbourhood $U$ of $y$ such that the characteristic function $p$ of $U$ belongs to $E$ and $fp\in E$. By compactness we conclude that $f\in E$.
\ep

The following consequence of the above theorem shows that the arithmetic subalgebra $\mathcal E_K=E_K\rtimes I_K$ of the BC-system is unique within a class of algebras not necessarily arising from endomotives.

\begin{theorem}
\label{SergeyB}
The $K$-subalgebra $\mathcal E_K$ of $A_K$ constructed in the paper is the unique arithmetic subalgebra that is generated by some locally constant functions on $Y_K$ and by the elements $U_\aaa$ and $U_\aaa^*$, $\aaa\in I_K$.
\end{theorem}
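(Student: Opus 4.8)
\bp
The plan is to reduce the statement to Theorem~\ref{SergeyA}. Let $\mathcal B\subseteq A_K$ be an arithmetic subalgebra which, for some set $\Sigma$ of locally constant functions on $Y_K$, is generated as a $K$-algebra by $\Sigma$ together with all the isometries $U_\aaa$ and their adjoints $U_\aaa^*$, $\aaa\in I_K$, and put $B:=\mathcal B\cap C(Y_K)$. The first step is to note that $\mathcal B\otimes_K\complex$ is invariant under the canonical gauge action of the circle on $A_K$ (the one fixing $C(Y_K)$ and rescaling the $U_\aaa$), since all its generators are gauge-homogeneous; consequently the canonical conditional expectation $\Phi\colon A_K\to C(Y_K)$ restricts on $\mathcal B\otimes_K\complex$ to the projection onto the degree-zero part, so that $\Phi(\mathcal B\otimes_K\complex)=B\otimes_K\complex$. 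Using the identities $U_\aaa xU_\aaa^*=\rho_\aaa(x)$ and $U_\aaa^*xU_\aaa=\sigma_\aaa(x)$ for $x\in B$ (compare Lemma~\ref{struclem} and Lemma~\ref{easylemma}) one moreover sees that $B$ is a $K$-subalgebra of $C(Y_K)$ which is stable under all $\rho_\aaa$ and $\sigma_\aaa$ and whose elements are locally constant; in particular $B$ satisfies property~(a).

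It remains to verify properties (b), (c) and (d) for $B$. Property~(c) is immediate: by Lemma~\ref{easylemma} we have $\rho_\aaa^n(1)=\rho_{\aaa^n}(1)=U_{\aaa^n}U_{\aaa^n}^*\in\mathcal B$, and since this is a genuine function it lies in $B$. Property~(b) follows from axiom~(vii) of an arithmetic model: $\mathcal B\otimes_K\complex$ is dense in $A_K$, hence $B\otimes_K\complex=\Phi(\mathcal B\otimes_K\complex)$ is dense in $C(Y_K)=\Phi(A_K)$, and a dense subalgebra of the algebra of continuous functions on a compact Hausdorff space automatically separates points.

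The main point is property~(d). The extremal $KMS_\infty$-states of $\mathcal A_K$ are the point evaluations $\varrho_\omega$ at $\omega\in Y_K^\times$ (section~\ref{kmsinfty}), so axiom~(v) gives $f(\omega)=\varrho_\omega(f)\in K^{ab}$ for every $f\in B$ and $\omega\in Y_K^\times$, while axiom~(vi) translates precisely into $\kab$-equivariance of the restriction $f|_{Y_K^\times}\colon Y_K^\times\to K^{ab}$, where $Y_K^\times\cong\kab$ carries the translation action. To extend this to all of $Y_K$ I would use that $B$ is stable under the $\sigma_\aaa$ and that the $I_K$- and $\kab$-actions on $Y_K$ commute: if $y=\aaa\omega$ with $\aaa\in I_K$ and $\omega\in Y_K^\times$, then $\sigma_\aaa(f)(\omega)=f(y)$ with $\sigma_\aaa(f)\in B$, so the equivariance already known for $\sigma_\aaa(f)$ at $\omega$ gives $f(gy)=g(f(y))$ for $g\in\kab$; since $\bigcup_{\aaa\in I_K}\aaa Y_K^\times$ is the image of $\okhat^\natural\times\kab$ in $Y_K$ and hence dense, and since $f$ is locally constant, both the $K^{ab}$-valuedness and the equivariance propagate to every point of $Y_K$.

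Thus $B$ satisfies (a)--(d), so $B=E_K$ by Theorem~\ref{SergeyA}. As $B\subseteq\mathcal B$ and $B$ is stable under the $\rho_\aaa$ and $\sigma_\aaa$, the algebra generated by $B$ together with the $U_\aaa,U_\aaa^*$ equals $B\rtimes I_K=E_K\rtimes I_K=\mathcal E_K$; since moreover $\Sigma\subseteq B$, this algebra is exactly $\mathcal B$, whence $\mathcal B=\mathcal E_K$. I expect the main obstacle to be property~(d)---in particular pinning down the gauge grading underlying $\Phi$ and making rigorous the passage from $Y_K^\times$ to all of $Y_K$---rather than anything conceptually deep, since Theorem~\ref{SergeyA} already carries the substantial content.
\ep
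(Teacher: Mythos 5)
Your proposal is correct and follows essentially the same route as the paper: set $B=\mathcal B\cap C(Y_K)$, note that (d) holds a priori only on $Y_K^\times$, and propagate it to all of $Y_K$ using $\sigma_\aaa$-invariance, the density of $\bigcup_{\aaa\in I_K}\aaa Y_K^\times$, and local constancy, then invoke Theorem~\ref{SergeyA}. The only difference is that you spell out the verification of (a)--(c) (via the gauge grading and conditional expectation) which the paper simply asserts.
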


\bp Assume $\mathcal E$ is such an arithmetic subalgebra. Consider the $K$-algebra $E=\mathcal E\cap C(Y_K)$. It satisfies properties (a)-(c), while (d) a priori holds only on the subset $Y_K^\times\subset Y_K$. However, the algebra $E$ is invariant under the endomorphisms $\sigma_\aaa$, $\aaa\in I_K$, defined by $\sigma_\aaa(f)=f(\aaa\,\cdot)=U_\aaa^*fU_\aaa$. Hence property (d) holds on the subsets $\aaa Y_K^\times$ of $Y_K$. Since $\cup_{\aaa\in I_K}\aaa Y_K^\times$ is dense in $Y_K$ and the functions in $E$ are locally constant, it follows that (d) holds on the whole set $Y_K$. Therefore $E=E_K$ by the previous theorem, and so $\mathcal E=\mathcal E_K$.
\ep

Let $E$ be the $K$-algebra of locally constant $K^{ab}$-valued $Gal(K^{ab}/K)$-equivariant functions on $Y_K$. Let us now show directly that $E\rtimes I_K$ is an arithmetic subalgebra of $A_K$.\\
In order to prove the density of the $\C$-algebra generated by $E\rtimes I_K$ in $A_K$, it suffices to show that the $\C$-algebra generated by $E$ is equal to the algebra of complex valued locally constant functions on $Y_K$. Since~$Y_K$ is a projective limit of finite $Gal(K^{ab}/K)$-sets, this follows from the following simple statement: if~$L$ is a finite Galois extension of $K$ and $Y$ is a finite $Gal(L/K)$-set, then the $L$-linear span of the $K$-algebra of $Gal(L/K)$-equivariant functions $Y\to L$ coincides with the $L$-algebra of all $L$-valued functions on $Y$.\\
In particular, $E$ separates points of $Y_K$. The property that $K^{ab}$ is generated by the values $f(y)$, $f\in E$, for any $y\in Y^\times_K$, follows now from Lemma~\ref{lgen}, as $Gal(K^{ab}/K)$ acts freely on $Y_K^\times$.\\
Thus $E\rtimes I_K\subset A_K$ is indeed an arithmetic subalgebra. Furthermore, it is easy to see that~$E$ is an inductive limit of \'{e}tale $K$-algebras and $\mathrm{Hom}_{K\text{-alg}}(E,\overline K)=Y_K$. Therefore $\mathcal E=E\rtimes I_K$ is, in fact, an endomotive and $\mathcal E^{an}=A_K$.\\ \\
We finish by making a few remarks about general arithmetic subalgebras of the BC-system $\mathcal A_K$. Assume $\mathcal E\subset A_K$ is an arithmetic subalgebra. Also assume that it contains the elements $U_\aaa$ and $U_\aaa^*$ for all $\aaa\in I_K$. Consider the image of $\mathcal E$ under the canonical conditional expectation $A_K\to C(Y_K)$, and let $E$ be the $K$-algebra generated by this image. Then $E$ satisfies the following properties:
\begin{itemize}
\item[(a$'$)] every function in $E$ is continuous;
\item[(b$'$)] the $\C$-algebra generated by $E$ is dense in $C(Y_K)$; in particular, $E$ separates points of $Y_K$;
\item[(c$'$)] $E$ is invariant under the endomorphisms $\rho_\aaa$ and $\sigma_\aaa$ for all $\aaa\in I_K$;
\item[(d$'$)] for every $f\in E$ we have $f(Y_K^\times)\subset K^{ab}$ and the map $f\colon Y_K^\times\to K^{ab}$ is $Gal(K^{ab}/K)$-equivariant.
\end{itemize}
Conversely, if $E$ is a unital $K$-algebra of functions on $Y_K$ with properties (a$'$)-(d$'$), then $\mathcal E=E\rtimes I_K$ is an arithmetic subalgebra of $A_K$ and the intersection $\mathcal E\cap C(Y_K)$, as well as the image of $\mathcal E$ under the conditional expectation onto $C(Y_K)$, coincides with $E$. Note again that the property that $K^{ab}$ is generated by the values~$f(y)$, $f\in E$, for any $y\in Y^\times_K$, follows from the proof of Lemma~\ref{lgen}. The largest algebra satisfying properties (a$'$)-(d$'$) is the $K$-algebra of continuous functions such that their restrictions to $\aaa Y^\times_K$ are $K^{ab}$-valued and $Gal(K^{ab}/K)$-equivariant for all $\aaa\in I_K$. This algebra is strictly larger than the algebra $E_K$. Indeed, it, for example, contains the functions of the form $\sum^\infty_{n=0}q_n\rho^n_{\pp_v}(1)$, where $\sum_nq_n$ is any convergent series of rational numbers. Such a function takes value $\sum_{n=0}^\infty q_n$, which can be any real number, at every point $y\in \cap_{n\ge0}\pp_v^nY_K$.


\part{Functoriality}
\noindent
In \cite{LNT} Laca, Neshveyev and Trifkovic were able to construct a functor from the category of number fields to the category of BC-systems. In the latter morphisms are given by correspondences in form of a Hilbert $C^*$-bimodule. More precisely, for an inclusion $\sigma: K \to L$ of number fields they construct, quite naturally, an $A_L$-$A_K$ correspondence $Z = Z_{K,\sigma}^L$, i.e. a right Hilbert $A_K$-module $Z$ with a left action of $A_L$ (cf., \eqref{bctype}). Unfortunately, the time evolutions of $A_K$ and $A_L$ are not compatible under $Z$, which is in fact not surprising. In order to remedy the situation the authors of \cite{LNT} introduce a normalized time evolution $\widetilde \sigma _t$ on the $A_K$ given by \eq{\label{normalizedtime}\widetilde \sigma _t (f u_s) = N_{K/\q}(s)^{it / [K:\q]} f u_s \,.}
With this normalization they obtain a functor $K \mapsto (A_K,\widetilde \sigma _t)$ where the correspondences are compatible with the time evolutions. \\ \\
We will show that their functor arises naturally in the context of (algebraic) endomotives. However it doesn't seem likely that the normalized time evolution \eqref{normalizedtime} can be recovered naturally in the framework of endomotives, at least not in a naive sense (see section \ref{timeevolution}). \\ \\
The first obstacle in constructing an algebraic version of the functor constructed in \cite{LNT} is that the different algebraic endomotives $\mathcal E _K$ are defined over different number fields which means that they live in different categories. \\ 
To overcome this, we introduce the notion of "base-change" in this context. More precisely, one finds two natural ways of changing the base of $\mathcal E _K$ which correspond to the two fundamental functoriality properties of class field theory given by the Verlagerung and restriction map respectively. Although both procedures change the algebraic endomotive, the analytic endomotive of the initial and base changed endomotive will remain the same. \\ \\
Our strategy is then, first, to base change all the $\mathcal E _K$ down to $\q$ and then, second, construct a functor from the category of number fields to the category of algebraic endomotives over $\q$. Finally, we will show that our functor recovers the functor constructed in \cite{LNT} (except for the normalization \eqref{normalizedtime}).

\begin{remark}
We would also like to mention the very interesting recent work of Cornelissen and Marcolli \cite{CorMar}, where it is shown that two BC-systems are isomorphic as ("daggered") $C^*$-dynamical systems if and only if the underlying number fields are isomorphic.
\end{remark}

\subsection*{Notations and Conventions}

In the following, when speaking about extensions of number fields, instead of specifying an embedding $\sigma : K \to L$, we simply write $L/K$. 
Moreover we fix a tower $M/L/K$ of finite extensions of number fields (contained in $\complex$). 
We denote the Artin reciprocity map by $[\cdot]_K : \mathbb A _K ^\times \to Gal(K^{ab}/K)$.

\section{Algebraic preliminaries}
\noindent
Recall the two fundamental functoriality properties of Artin's reciprocity map in form of the following two commutative diagrams (cf., \cite{nk})
\diagramnr{diagrams}{\mathbb A _L^\times \ar[rr]^{[\cdot]_L} && \lab & & \mathbb A_L^\times \ar[d]_{N_{L/K}} \ar[rr]^{[\cdot]_L} && \lab \ar[d]^{Res} \\ \mathbb A _K^\times \ar[rr]^{[\cdot]_K} \ar[u]^{i_{K/L}}&& \kab \ar[u]_{Ver} && \mathbb A _K^\times \ar[rr]^{[\cdot]_K} && \kab}
\begin{remark}
Notice that the Verlagerung $Ver$ is injective.
\end{remark} 
\noindent
The diagrams allow one to define two maps of topological monoids (which are of central importance for everything that eventually follows) \eqst{\mathcal V_{L/K} : \okhat \times_{\okhat^\times} \kab \longrightarrow \olhat \times_{\olhat^\times} \lab \ ; [\rho,\alpha] \mapsto [i_{K/L}(\rho),Ver(\alpha)]}
and \eqst{\mathcal N _{L/K} : \olhat \times_{\olhat^\times} \lab  \longrightarrow \okhat \times_{\okhat^\times} \kab \ ; [\gamma,\beta] \mapsto [N_{L/K}(\gamma),Res(\beta)]\,.}

\begin{remark}
The first map is always injective\footnote{This follows from a Galois descent argument.}, whereas the second map is in general neither injective nor surjective.
\end{remark}
\noindent
Now, using these two maps, we can define two "base-change" functors relating the categories $\mathfrak S _{\mathrm{DR}_K}$ and $\mathfrak S _{DR_L}$ (cf.,\ section \ref{bds}). \\
The first functor \eqst{\mathfrak V = \mathfrak V _{L/K} : \mathfrak S_{DR_L} \longrightarrow \mathfrak S_{\mathrm{DR}_K}} is given by sending a finite set $S$ with action of $DR_L$ to the set $S$ with an action of $\mathrm{DR}_K$ given by restricting the action of $DR_L$ via $\mathcal V_{L/K}$.\\
The second functor \eqst{\mathfrak N = \mathfrak N_{L/K} : \mathfrak S _{\mathrm{DR}_K} \longrightarrow \mathfrak S _{DR_L}} is defined by sending a finite set $S$ with its action by $\mathrm{DR}_K$ to the same set $S$ with an action of $DR_L$ defined by pulling back the action of $\mathrm{DR}_K$ via $\mathcal N _{L/K}$. Using the functorial equivalence $\mathfrak S _{\mathrm{DR}_K} \to \mathfrak E _{\Lambda_K}$ (see \eqref{functorBdS}), we obtain corresponding functors on the algebraic side \eqst{\mathfrak V ^{alg} : \mathfrak E _{\Lambda_ L} \longrightarrow \mathfrak E _{\Lambda_K}} and \eqst{\mathfrak N ^{alg} : \mathfrak E _{\Lambda_ K} \longrightarrow \mathfrak E _{\Lambda_L}\,.}

\begin{lemma}
1) The functor $\mathfrak V ^{alg}$ is determined by the fact that a finite abelian extension $L'$ of $L$ is sent to the direct product $\prod_{i=1}^h K'$, where the finite, abelian extension $K'$ of $K$ and the index $h$ are specified below. \\
2) The functor $\mathfrak N ^{alg}$ is given by $E \mapsto E \otimes_K L$.
\end{lemma}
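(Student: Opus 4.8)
The plan is to unwind both base-change functors through the Grothendieck--Galois dictionary of Corollary~\ref{ilggc} and the explicit functoriality diagrams \eqref{diagrams}, reducing each statement to an elementary computation on $\mathrm{Hom}$-sets. Since $\mathfrak V^{alg}$ and $\mathfrak N^{alg}$ are by definition the transports of $\mathfrak V$ and $\mathfrak N$ under the (contravariant) equivalences $\mathfrak S_{\mathrm{DR}_K}\simeq\mathfrak E_{\Lambda_K}$ and $\mathfrak S_{DR_L}\simeq\mathfrak E_{\Lambda_L}$, it suffices in each case to identify the image on the level of finite sets with $DR$-action and then read off which $K$-algebra (resp.\ $L$-algebra) it corresponds to.

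For part~1), I would start from a finite abelian extension $L'/L$, which under $\mathfrak H_L$ corresponds to the $DR_L$-set $X'=\mathrm{Hom}_{L\text{-alg}}(L',\overline K)$; this is a transitive $DR_L$-set, in fact a torsor under a quotient $C_{\mathfrak f}$ of $\mathrm{DR}_L^\times\cong\lab$ by \eqref{prop3}. Applying $\mathfrak V_{L/K}$ means restricting the action along $\mathcal V_{L/K}$, whose essential ingredient on units is the Verlagerung $Ver\colon\kab\to\lab$. The key point is that $\mathrm{DR}_K$ acts on $X'$ through $\mathrm{DR}_K\to\lab\xrightarrow{} C_\mathfrak f$, so the orbits of $\mathrm{DR}_K$ on $X'$ are exactly the cosets of the image of $Ver$ (composed with projection), and each orbit is a torsor under that image; the number $h$ of orbits is the index of this image, and each orbit, being a transitive $\mathrm{DR}_K$-set that factors through $\kab$, corresponds under $\mathfrak H_K^{-1}$ to a finite abelian extension $K'/K$ — the fixed field of the relevant subgroup. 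Because all $h$ orbits are isomorphic as $\mathrm{DR}_K$-sets (the $\mathrm{DR}_K^\times$-action permutes them transitively), their disjoint union corresponds to the product $\prod_{i=1}^h K'$, using the elementary decomposition $\mathrm{Hom}_{K\text{-alg}}(A\oplus B,\overline K)\cong\mathrm{Hom}_{K\text{-alg}}(A,\overline K)\sqcup\mathrm{Hom}_{K\text{-alg}}(B,\overline K)$ already invoked earlier in the paper. Here $K'$ and $h$ are precisely the data to be ``specified below'' in the paper. Since a functor on these categories is determined by its values on transitive objects (every object is a finite disjoint union of transitive ones and morphisms are equivariant maps), this pins down $\mathfrak V^{alg}$.

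For part~2), I would argue more directly: I claim that on the level of finite sets, $\mathfrak N_{L/K}$ sends $\mathrm{Hom}_{K\text{-alg}}(E,\overline K)$ to $\mathrm{Hom}_{L\text{-alg}}(E\otimes_K L,\overline K)$ with its natural $DR_L$-action. Indeed there is a canonical bijection $\mathrm{Hom}_{L\text{-alg}}(E\otimes_K L,\overline K)\cong\mathrm{Hom}_{K\text{-alg}}(E,\overline K)$ coming from the universal property of base change (using the fixed embeddings $K\subset L\subset\overline K$), and one checks that the $DR_L$-action on the left, which factors through $\lab$ acting on $E\otimes_K L$, corresponds under this bijection to the $\mathrm{DR}_K$-action on $\mathrm{Hom}_{K\text{-alg}}(E,\overline K)$ pulled back along $\mathcal N_{L/K}$ — this is exactly the content of the right-hand functoriality diagram in \eqref{diagrams}, namely that $\lab\xrightarrow{Res}\kab$ is compatible with the norm $N_{L/K}$ on ideles and hence with the map of Deligne--Ribet monoids. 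Transporting back along the equivalences then gives $\mathfrak N^{alg}(E)=E\otimes_K L$, and functoriality in $E$ is automatic since $-\otimes_K L$ is a functor. One should also verify that $E\otimes_K L$ indeed carries an integral $\Lambda_L$-structure when $E$ has an integral $\Lambda_K$-structure, i.e.\ that it lands in $\mathfrak E_{\Lambda_L}$ and not merely in $\mathfrak E_L$; but this is forced by the fact that its associated $DR_L$-set factors through some $DR_\mathfrak f$ (by Theorem~\cite{BdS} 1.2), which holds because it is pulled back from a $\mathrm{DR}_K$-set factoring through a finite level.

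The main obstacle I expect is part~1): correctly identifying $K'$ and $h$ requires chasing the Verlagerung through the ray-class-group quotients, i.e.\ understanding the image of $\kab\xrightarrow{Ver}\lab\twoheadrightarrow C_\mathfrak f$ in terms of conductors, and being careful that the $\mathrm{DR}_K$-action on $X'$ really does factor through a finite quotient (so that $\mathfrak H_K^{-1}$ applies and produces a genuine finite abelian extension). By contrast part~2) is essentially formal once the compatibility in diagram~\eqref{diagrams} is in hand. I would therefore devote most of the write-up to making the orbit decomposition in part~1) explicit and to naming $K'$ and $h$ precisely.
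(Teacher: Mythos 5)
Your proposal is correct and follows essentially the same route as the paper: part 1) is the paper's construction of $K'$ (the fixed field of the kernel of the Verlagerung composed with the projection to $Gal(\widetilde{L'}/L)$) and of $h$ (the index, i.e.\ your orbit count) read through the orbit decomposition of the restricted $\mathrm{DR}_K$-set, while part 2) the paper dismisses as obvious and your adjunction argument simply fills in. The one slip is your parenthetical justification that the $h$ orbits are isomorphic: $\mathrm{DR}_K^\times$ acts through a subgroup of $Gal(\widetilde{L'}/L)$ that stabilizes each orbit rather than permuting them, and the required $\mathrm{DR}_K$-equivariant bijections between orbits come instead from the commuting action of $DR_L^\times$ (equivalently of $Gal(\widetilde{L'}/L)$), which does permute the orbits transitively.
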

\begin{proof}
1) Define the map $\phi$ to be the composition of the Verlagerung $Gal(K^{ab}/K)\to Gal(L^{ab}/L)$ and the projection $Gal(L^{ab}/L)\to Gal(\widetilde{L'}/L)$ where $\widetilde{L'}$ denotes the Galois closure of $L'$. We can identify the quotient $Gal(K^{ab}/K) / \mathrm{Ker} \phi$ with a finite, abelian Galois group $Gal(\widetilde K / K)$ sitting inside $Gal(\widetilde{L'}/L)$, i.e. $Gal(\widetilde K / K) \cong Gal(\widetilde{L'}/L^K)$ for a subfield $L^K \subset \widetilde{L'}$. We define $K'$ to be the subfield of $\widetilde K$ corresponding to the subgroup $Gal(\widetilde{L'}/L')\cap Gal(\widetilde{L'}/L^K) \subset Gal(\widetilde K / K)$. Using again only basic Galois theory we see that the fraction \eqst{\frac{|Gal(\widetilde{L'}/L)|\cdot |Gal(\widetilde{L'}/L')\cap Gal(\widetilde{L'}/L^K)|}{|Gal(\widetilde{L'}/L^K)|\cdot |Gal(\widetilde{L'}/L') |}} is actually a natural number and this will be the index $h$. In particular we see that we have the equality $|\mathrm{Hom}_L(L',\overline L)| = h \cdot | \mathrm{Hom}_K(K',\overline K)| =  |\mathrm{Hom}_K(\prod_{i=1}^h K',\overline K)|$. \\
2) This is obvious.
\end{proof}

\begin{remark}
If $L' / L$ is Galois then $K' / K$ is also Galois.
\end{remark}
\noindent
Let us make the functor $\mathfrak V ^{alg}$ more transparent in the context of strict ray class fields which occur in the definition of the $\mathcal E _K$. For this, let us first introduce the following notation. If $\mathfrak d$ denotes a non-zero, integral ideal in $\iok$, we denote by $\mathfrak d ^L$ the corresponding ideal in $\iol$. For example, if $\mathfrak d = \mathfrak p$ is a prime ideal then $\mathfrak p ^L = \mathfrak p \mathcal O _L$ is usually written in the form \eqst{\mathfrak p^L = \prod_{\mathfrak P | \mathfrak p} \mathfrak P ^{e(\mathfrak P | \mathfrak p)}\, ,}
where $\mathfrak P$ denotes a prime ideal of $L$ and $e(\mathfrak P | \mathfrak p)$ the ramification index of $\mathfrak P$ in $\mathfrak p$. Moreover, let us denote by $K_\mathfrak d$ and $L_{\mathfrak d ^L}$ the corresponding strict ray class fields and by $K^{\mathfrak d ^L}$ the field constructed from $L_{\mathfrak d ^L}$ above. Then we have the following: 
\begin{lemma}
With the notations from above let $\mathfrak d$ be in $\iok$. Then we have \eqst{K_\mathfrak d = K^{\mathfrak d ^L} \subset L_{\mathfrak d ^L}\,.}
\end{lemma}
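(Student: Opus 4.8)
The plan is to identify both sides of the claimed equality as fixed fields inside $L_{\mathfrak d ^L}$ of explicitly described subgroups of $\mathrm{Gal}(L_{\mathfrak d ^L}/L)$, and then check these subgroups coincide. First I would recall that by class field theory $\mathrm{Gal}(L_{\mathfrak d ^L}/L) \cong C_{\mathfrak d ^L}$, the strict ray class group of $L$ of conductor $\mathfrak d ^L(\infty)$, and similarly $\mathrm{Gal}(K_\mathfrak d / K) \cong C_\mathfrak d$. The field $K^{\mathfrak d ^L}$ was defined (in the preceding lemma) by taking $L' = L_{\mathfrak d ^L}$, forming the composite $\phi$ of the Verlagerung $\mathrm{Gal}(K^{ab}/K) \to \mathrm{Gal}(L^{ab}/L)$ with the projection onto $\mathrm{Gal}(\widetilde{L'}/L) = \mathrm{Gal}(L_{\mathfrak d^L}/L)$ (note $L'/L$ is already abelian, hence Galois, so $\widetilde{L'} = L'$), identifying $\mathrm{Gal}(K^{ab}/K)/\mathrm{Ker}\,\phi$ with a subgroup of $\mathrm{Gal}(L_{\mathfrak d^L}/L)$, and then $K^{\mathfrak d^L}$ is the subfield of the corresponding $\widetilde K$ cut out by intersecting with $\mathrm{Gal}(L_{\mathfrak d^L}/L')$, which here is trivial. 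So $K^{\mathfrak d^L}$ corresponds simply to the image of $\phi$, i.e. to the subgroup $\mathrm{Ver}(\mathrm{Gal}(K^{ab}/K))$ reduced mod $\mathrm{Gal}(L^{ab}/L_{\mathfrak d^L})$, as a subextension of $L_{\mathfrak d^L}/L$.

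Next I would translate everything through Artin reciprocity into idele-class-group language, using the two functoriality diagrams \eqref{diagrams}. On the one hand, $K_\mathfrak d$ corresponds to the norm subgroup of $\mathbb A_K^\times$ defining the ray class field of conductor $\mathfrak d(\infty)$, namely the subgroup generated by $K^\times$, the connected component at infinity cut out by totally positive elements, and $1 + \mathfrak d\,\widehat{\OO}_K$. On the other hand, $L_{\mathfrak d^L}$ corresponds to the analogous norm subgroup in $\mathbb A_L^\times$ of conductor $\mathfrak d^L(\infty)$. The key compatibility is that the inclusion $i_{K/L}: \mathbb A_K^\times \hookrightarrow \mathbb A_L^\times$ sends the conductor-$\mathfrak d(\infty)$ congruence conditions on $\mathbb A_K^\times$ precisely into (the restriction to $\mathbb A_K^\times$ of) the conductor-$\mathfrak d^L(\infty)$ conditions on $\mathbb A_L^\times$: a finite idele $\rho$ of $K$ lies in $1 + \mathfrak d\,\widehat\OO_K$ iff $i_{K/L}(\rho)$ lies in $1 + \mathfrak d^L\widehat\OO_L$ (because $\mathfrak d^L = \mathfrak d\,\OO_L$, so $\mathfrak d^L \widehat\OO_L \cap \widehat\OO_K = \mathfrak d\,\widehat\OO_K$), and totally positive elements of $K$ are totally positive in $L$. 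Combined with the commutativity of the left square in \eqref{diagrams}, this shows that $\mathrm{Ver}$ carries $\mathrm{Gal}(K^{ab}/K_\mathfrak d)$ into $\mathrm{Gal}(L^{ab}/L_{\mathfrak d^L})$, so that $\phi$ factors through $\mathrm{Gal}(K_\mathfrak d/K)$ and identifies it isomorphically (using injectivity of $\mathrm{Ver}$) with a subgroup of $\mathrm{Gal}(L_{\mathfrak d^L}/L)$; the fixed field of that subgroup is $K^{\mathfrak d^L}$ on the one side and, by the very definition of $K_\mathfrak d$ as a subfield of $K^{ab}$ viewed inside $L_{\mathfrak d^L}$ via the Verlagerung, is $K_\mathfrak d$ on the other. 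This yields $K_\mathfrak d = K^{\mathfrak d^L}$, and the inclusion $K^{\mathfrak d^L} \subset L_{\mathfrak d^L}$ is immediate from the construction in the preceding lemma.

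I expect the main obstacle to be bookkeeping at the infinite places: making sure the "strict" (i.e. the $(\infty)$ part of the) conductor behaves correctly under $i_{K/L}$, and that the connected-component subgroup $\pi_0$ of $(\mathbb A_{K,\infty})^\times$ appearing in diagram \eqref{basicnt} maps as expected — a real place of $K$ may split, ramify, or stay real in $L$, and one must check that "totally positive at the real places of $K$" pulls back correctly from "totally positive at the real places of $L$". The cleanest way around this is to avoid explicit infinite-place computations and instead argue purely at the level of ray class \emph{groups}: show directly that the norm-conductor of $K_\mathfrak d$ inside $\mathbb A_K^\times$ is the preimage under $i_{K/L}$ of the norm-conductor of $L_{\mathfrak d^L}$ inside $\mathbb A_L^\times$, which is a local statement place by place and only uses $\mathfrak d\,\OO_L = \mathfrak d^L$ together with the fact that a real embedding of $K$ extends to a (possibly complex) embedding of $L$ in a way compatible with positivity. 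Everything else is formal manipulation of the two functoriality squares and the injectivity of $\mathrm{Ver}$.
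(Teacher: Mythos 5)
Your overall strategy is the same as the paper's: translate both assertions into idelic language, so that the equality $K_\mathfrak d = K^{\mathfrak d^L}$ becomes $\iota_{L/K}^{-1}(C_L^{\mathfrak d^L}) = C_K^{\mathfrak d}$, and verify this place by place. Your key finite-place identity (that $\rho\in 1+\mathfrak d\okhat$ iff $i_{K/L}(\rho)\in 1+\mathfrak d^L\olhat$) is exactly the local ramification fact $\iota_{\mathfrak P}^{-1}(\mathfrak P^{ei}\mathcal O_{L_\mathfrak P}) = \mathfrak p^i\mathcal O_{K_\mathfrak p}$ that the paper invokes, and your identification of $K^{\mathfrak d^L}$ as the fixed field of $\ker\phi$ (using $\widetilde{L'}=L'$ for $L'=L_{\mathfrak d^L}$) is correct.

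The gap sits exactly where you anticipated it, at the real places, and your proposed fix does not close it. Compatibility of positivity under $K\hookrightarrow L$ gives only one inclusion: $i_{K/L}$ carries the congruence subgroup of modulus $\mathfrak d(\infty_K)$ into that of modulus $\mathfrak d^L(\infty_L)$, hence $C_K^{\mathfrak d}\subseteq \iota_{L/K}^{-1}(C_L^{\mathfrak d^L})$ and $K^{\mathfrak d^L}\subseteq K_\mathfrak d$. For the reverse inclusion you would need positivity at each real place $v$ of $K$ to be imposed by the conditions defining $C_L^{\mathfrak d^L}$, and this fails whenever every place of $L$ above $v$ is complex, since complex places carry no sign condition. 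This is not merely a hole in the argument; the asserted equality can fail. Take $K=\q(\sqrt3)$, $L=K(i)=\q(\zeta_{12})$ and $\mathfrak d=\ok$. Then $K_{\mathfrak d}$ is the narrow Hilbert class field of $K$, which is $K(i)$ itself (the narrow class number of $\q(\sqrt3)$ is $2$ and $K(i)/K$ is unramified at all finite primes), whereas $L_{\mathfrak d^L}$ is the Hilbert class field of $\q(\zeta_{12})$, which is $L$ itself; so $\mathrm{Gal}(L_{\mathfrak d^L}/L)$ is trivial, $\phi$ is trivial, and $K^{\mathfrak d^L}=K\neq K(i)=K_\mathfrak d$. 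For what it is worth, the paper's own proof has the identical omission: it reduces to $\mathfrak d=\mathfrak p^i$ and treats only the finite places. What is actually needed downstream, in proposition \ref{fruitprop}, is only that $\mathcal V_{L/K}$ descends to the finite levels, i.e.\ the inclusion $\mathrm{Ver}(\mathrm{Gal}(K^{ab}/K_\mathfrak f))\subseteq\mathrm{Gal}(L^{ab}/L_{\mathfrak f^L})$, and that is the direction your argument (like the paper's) does establish.
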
 
\begin{proof}
Using basic class field theory (cf., \cite{nk}) the two assertions can be reformulated in the idelic language and are seen to be equivalent to \eqst{ \iota_{L/K}^{-1}(C_L^{\mathfrak d ^L}) = C_K^\mathfrak d \text{ \ \ \ \ and \ \ \  \   }\iota_{L/K}(N_{L/K}(C_L^{\mathfrak d ^L}) ) \subset C_L^{\mathfrak d ^L}\, ,}
where $C_K^{\mathfrak d}$ is the standard open subset of $C_K = \mathbb A _K^\times / K^\times$ such that $C_K / C_K^\mathfrak d \cong Gal(K_\mathfrak d / K)$ and analogously for $C_L^{\mathfrak d ^L}$. Further, it is enough to consider the case $\mathfrak d = \mathfrak p ^i$ for some $i\geq 1$. Let us recall the following fact from ramification theory. If $\mathfrak P$ divides $\mathfrak p$ with ramification index $e = e(\mathfrak P | \mathfrak p)$ and if we denote by $\iota_\mathfrak P : K_\mathfrak p \to L _\mathfrak P$ the natural inclusion of local fields, we have \eqst{\iota_\mathfrak P ^{-1}(\mathfrak P^{ei} \mathcal O _{L _\mathfrak P}) = \mathfrak p ^i \mathcal O _{K_\mathfrak p} \,.}
This proves the first assertion, and the second assertion follows directly from the definition of the norm map $N_{L/K}$.
\end{proof}
\noindent
As a first application, it is shown in the next proposition how one can relate the different algebras $E_K$ (cf., \ref{defek}).

\begin{proposition}
\label{fruitprop}
1) The functor $\mathfrak V ^{alg}$ induces a $K$-algebra homomorphism $\mathfrak V^{alg}(E_L) \to E_K$ compatible with the $\Lambda_K$-structure.\\
2) The functor $\mathfrak N ^{alg}$ induces a $L$-algebra homomorphism $E_K \otimes_K L \to E_L$ compatible with the $\Lambda_L$-structure. \\
3) There exists an injective $K$-algebra homomorphism $E_K \to E_L$.
\end{proposition}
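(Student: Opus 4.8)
The plan is to deduce parts 1 and 2 from the refined Grothendieck--Galois correspondence of Corollary~\ref{ilggc}, in its pro-version \eqref{profunctor}, combined with the monoid isomorphism $\Psi$ of Proposition~\ref{keyprop}, which lets us identify $Y_K$ with $\mathrm{DR}_K$ and $Y_L$ with $\mathrm{DR}_L$ throughout; part 3 will instead be built by hand from the explicit description \eqref{arithalg}. The key preliminary observation is that both base-change maps $\mathcal V_{L/K}\colon Y_K\to Y_L$ and $\mathcal N_{L/K}\colon Y_L\to Y_K$ are in fact homomorphisms of topological monoids. Indeed, transporting the monoid structures from $\mathrm{DR}_K,\mathrm{DR}_L$ via $\Psi$ one has $[\rho,\alpha]\cdot[\rho',\alpha']=[\rho\rho',\alpha\alpha']$ on $Y_K$ (and likewise on $Y_L$), so this is immediate from the fact that $i_{K/L},\mathrm{Ver},N_{L/K}$ and $\mathrm{Res}$ are group (resp.\ monoid) homomorphisms. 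Thus, via $\Psi$, we obtain monoid homomorphisms $\mathcal V_{L/K}\colon\mathrm{DR}_K\to\mathrm{DR}_L$ and $\mathcal N_{L/K}\colon\mathrm{DR}_L\to\mathrm{DR}_K$.

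For part 1: under $\mathfrak H_K$ the algebra $E_K$ corresponds to the pro-object $\mathrm{DR}_K$ acting on itself by multiplication, whereas $\mathfrak V^{alg}(E_L)$ corresponds to $\mathfrak V(\mathrm{DR}_L)$, i.e.\ to $\mathrm{DR}_L$ with the $\mathrm{DR}_K$-action restricted along $\mathcal V_{L/K}$. Since a monoid homomorphism $\phi\colon M\to N$ is automatically a morphism of $M$-sets from $M$ to $\phi^{*}N$, the map $\mathcal V_{L/K}\colon\mathrm{DR}_K\to\mathfrak V(\mathrm{DR}_L)$ is $\mathrm{DR}_K$-equivariant; being continuous, and since the $DR_\mathfrak f$ form a cofinal family of finite quotients of $\mathrm{DR}_K$, it is a morphism of pro-objects, so $\mathfrak H_K^{-1}$ turns it into a $K$-algebra homomorphism $\mathfrak V^{alg}(E_L)\to E_K$. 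Compatibility with the $\Lambda_K$-structures is automatic, since that structure is precisely the $\iok$-part of the $\mathrm{DR}_K$-action (multiplication by classes of integral ideals) and an equivariant map is in particular $\iok$-equivariant. Part 2 is entirely parallel: view $\mathcal N_{L/K}$ as a $DR_L$-equivariant continuous map $\mathrm{DR}_L\to\mathfrak N(\mathrm{DR}_K)$ and apply $\mathfrak H_L^{-1}$ to obtain the $L$-algebra homomorphism $\mathfrak N^{alg}(E_K)=E_K\otimes_K L\to E_L$, which respects the $\Lambda_L$-structures.

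For part 3 one cannot merely compose the map of part 2 with the inclusion $E_K\hookrightarrow E_K\otimes_K L$, since $\mathcal N_{L/K}$ need not be surjective and the composite may fail to be injective; instead I would use the preceding lemma that $K_\mathfrak d=K^{\mathfrak d^L}\subseteq L_{\mathfrak d^L}$ for every $\mathfrak d\in\iok$ (with $\mathfrak d^L=\mathfrak d\OO_L$). Fix $\mathfrak f\in\iok$ and put $\mathfrak f^L=\mathfrak f\OO_L$. For a divisor $\mathfrak E\mid\mathfrak f^L$ let $\mathfrak e_\mathfrak f(\mathfrak E)$ be the largest divisor of $\mathfrak f$ whose extension $\mathfrak e_\mathfrak f(\mathfrak E)^L$ divides $\mathfrak E$; then $K_{\mathfrak e_\mathfrak f(\mathfrak E)}\subseteq L_{\mathfrak e_\mathfrak f(\mathfrak E)^L}\subseteq L_\mathfrak E$, so one can define a $K$-algebra homomorphism $\Phi_\mathfrak f\colon E_\mathfrak f\cong\prod_{\mathfrak e\mid\mathfrak f}K_\mathfrak e\longrightarrow\prod_{\mathfrak E\mid\mathfrak f^L}L_\mathfrak E\cong E_{\mathfrak f^L}$ (writing \eqref{arithalg} with the factors indexed by the conductor $\mathfrak e=\mathfrak f/\mathfrak d$) by declaring the $\mathfrak E$-coordinate of $\Phi_\mathfrak f(x)$ to be the image in $L_\mathfrak E$ of the $\mathfrak e_\mathfrak f(\mathfrak E)$-coordinate of $x$. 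Because $\mathfrak e_\mathfrak f(\mathfrak e_0^L)=\mathfrak e_0$, the assignment $\mathfrak E\mapsto\mathfrak e_\mathfrak f(\mathfrak E)$ is onto the divisors of $\mathfrak f$, whence $\Phi_\mathfrak f$ is a unital injective $K$-algebra homomorphism. Since $\{\mathfrak f^L:\mathfrak f\in\iok\}$ is cofinal in $\iol$ one has $\varinjlim_\mathfrak f E_{\mathfrak f^L}=E_L$, so it remains only to check that the $\Phi_\mathfrak f$ commute with the transition maps $\xi_{\mathfrak f,\mathfrak f'}$ of \eqref{transmaps} and $\xi_{\mathfrak f^L,(\mathfrak f')^L}$, after which they assemble into the desired injective homomorphism $E_K\to E_L$.

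This compatibility is the step I expect to be the main obstacle. Expanding the transition maps via the projections $\pi_{\mathfrak f,\mathfrak f'}$ of \eqref{projmaps} together with the decomposition $DR_\mathfrak f\cong\coprod_{\mathfrak d\mid\mathfrak f}C_{\mathfrak f/\mathfrak d}$, and using that all the maps in sight are inclusions of subfields of $\overline K$, the verification collapses to the combinatorial identity $r\bigl(\mathfrak e_{\mathfrak f'}(\mathfrak E')\bigr)=\mathfrak e_\mathfrak f\bigl(r_L(\mathfrak E')\bigr)$ for all $\mathfrak E'\mid(\mathfrak f')^L$, where $r$ (resp.\ $r_L$) is the conductor-restriction operation $\mathfrak e'\mapsto\mathfrak f/\gcd(\mathfrak f'/\mathfrak e',\mathfrak f)$ over $K$ (resp.\ over $L$). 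Since extension of ideals commutes with $\gcd$ --- so that $r_L(\mathfrak e'^L)=r(\mathfrak e')^L$ --- this identity can be checked one prime $\mathfrak p\mid\mathfrak f$ at a time by an elementary manipulation of floor and minimum functions. Granting it, parts 1--3 all follow; apart from the formalities of the equivalence of categories in parts 1 and 2, the real content lies in this combinatorial bookkeeping for part 3.
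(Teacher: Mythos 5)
Your argument follows the paper's own proof in all essentials: parts 1 and 2 come from the $\mathrm{DR}_K$- (resp.\ $DR_L$-) equivariance of $\mathcal V_{L/K}$ and $\mathcal N_{L/K}$ together with the correspondence \eqref{profunctor} (the paper secures the profinite compatibility via the lemma $K_{\mathfrak d}=K^{\mathfrak d^L}\subset L_{\mathfrak d^L}$ rather than by appealing to continuity, but this is the same point), and your map $\mathfrak E\mapsto\mathfrak e_{\mathfrak f}(\mathfrak E)$ is exactly the paper's $\omega_{\mathfrak f}$ for part 3. The transition-map compatibility you flag as the main obstacle is precisely the step the paper leaves to the reader, and your reduction of it to a prime-by-prime identity is a reasonable (indeed slightly more explicit) account of that check.
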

\begin{proof}
Using the two commutative diagrams
\diagram{DR_L \ar@(dl,dr)[]_{\mathcal V _{L/K}} & \mathrm{DR}_K \ar@(dl,dr)[] \ar[l]_{\mathcal V _{L/K}} & & \mathrm{DR}_K \ar@(dl,dr)[]_{\mathcal N _{L/K}} &  DR_L \ar[l]_{\mathcal N _{L/K}} \ar@(dl,dr)[] \\ \mathrm{DR}_K & \mathrm{DR}_K \ar@{=}[l] & & DR_L &  DR_L \ar@{=}[l]}
the \textbf{first two} assertions follow immediately if we can show that $\mathcal V _{L/K}: \mathrm{DR}_K \to DR_L$ and $\mathcal N _{L/K}: DR_L \to \mathrm{DR}_K$ are compatible with the profinite structures of $\mathrm{DR}_K$ and $DR_L$.
In this case we can simply apply the equivalence \eqref{profunctor}.
The compatibility of $\mathcal  N _{L/K}$ with the profinite structure of $DR_L$ follows from the compatibility of $\mathcal V _{L/K}$ with the profinite structure of $\mathrm{DR}_K$, and this would follow if $\mathcal V _{L/K}$ factors over \eqst{\ok / \mathfrak f \times_{(\ok / \mathfrak f)^\times} C_\mathfrak f \to \ol / \mathfrak f ^L \times_{(\ol / \mathfrak f ^L)^\times} C_{\mathfrak f ^L}\,.}
But this is true thanks to our previous lemma. \\
To prove the \textbf{third} assertion we define a surjective map \eqst{\omega_\mathfrak f : \{\mathfrak D \mid \mathfrak D \text{ divides } \mathfrak f ^L \} \to \{ \mathfrak d \mid \mathfrak d \text{ divides } \mathfrak f \}} by \eqst{\mathfrak D \longmapsto \prod_{\mathfrak p \mid \mathfrak f}\mathfrak p ^{max\{ j \  : \ \mathfrak P^{j e(\mathfrak P \mid \mathfrak p) } \mid \mathfrak D \ \forall \ \mathfrak P \mid \mathfrak p\}}\,.} Now we can define an embedding of $K$-algebras \eqst{E_{K,\mathfrak f}=\prod_{\mathfrak d | \mathfrak f}K_\mathfrak d \longrightarrow E_{L,\mathfrak f ^L} = \prod_{\mathfrak D | \mathfrak f ^L} L_{\mathfrak D}} by embedding $K_\mathfrak d$ into $L_\mathfrak D$ whenever $\omega_\mathfrak f(\mathfrak D) = \mathfrak d$. It is not very difficult to check that these maps induce a $K$-algebra embedding of the corresponding inductive systems. 
\end{proof}
\begin{remark}
Due to the fact that $\mathfrak f ^M = (\mathfrak f ^L)^M$ we see that the third map of the last proposition is in fact functorial, i.e., the composition $E_K \to E_L \to E_M$ equals $E_K \to E_M$. But, on the other hand, the inclusion $E_K \to E_L$ is not compatible with any $\Lambda$-structure.
\end{remark}

\section{On correspondences of endomotives}
\noindent
In this section we will follow our main reference \cite{ConMar} pp. 594.

\subsection{Algebraic correspondences}
\noindent
An algebraic endomotive $\mathcal E  = A \rtimes S$ can be described equivalently as a groupoid $\mathcal G$ as follows. Let us introduce, for $s = \rho_1 / \rho_2 \in \widetilde S$, two projections $E(s) = \rho_1^{-1}(\rho_2(1)\rho_1(1))$ and $F(s) = \rho_2^{-1}(\rho_2(1)\rho_1(1))$. They satisfy the relations $E(s^{-1})=F(s)=s(E(s))$ and show up naturally in that they are the biggest projections such that $s : A_{E(s)} = E(s)A \to A_{F(s)}$ is an isomorphism. Now, as a set $\mathcal G$ is defined by \eqst{\mathcal G = \mathrm{Spec}(\bigoplus_{s \in \widetilde S} A_{F(s)}) = \sqcup_{s\in \widetilde S} \mathrm{Spec}(A_{F(s)})\,.} The range and source maps \eqst{r,s : \mathrm{Spec}(\bigoplus_{s\in \widetilde S} A_{F(s)}) \to \mathrm{Spec}(A)} are given by the natural projection $A \to A_{F(s)}$, and the natural projection composed with the antipode $P : \bigoplus A_{F(s)} \to \bigoplus A_{F(s)}$ given by \eqst{P(a)_s = s(a_{s^{-1}}), \ \ \forall s \in \widetilde S \,.}
An \textbf{algebraic correspondence} between two algebraic endomotives $\mathcal E ' = A'\rtimes S'$ and $\mathcal E = A \rtimes S$ is given by a disjoint union of zero-dimensional pro-varietes $\mathcal Z = \mathrm{Spec}(C)$ together with compatible left and right actions of $\mathcal G '$ and $\mathcal G$ respectively. A right action of $\mathcal G$ on $\mathcal Z$ is given by a continuous map \eqst{g : \mathrm{Spec}(C) \to \mathrm{Spec}(A)} together with a family of partial isomorphisms
\eqst{z \in g^{-1}(\mathrm{Spec}(A_{E(s)})) \mapsto z \cdot s \in g^{-1}(\mathrm{Spec}(A_{F(s)})) \ \ \ \ \forall s \in \widetilde S}
satisfying the obvious rules for a partial action of an abelian group (cf., \cite{ConMar} p. 597). Analogously, one defines a left action. It is straightforward to check that a left (resp. right) action of $\mathcal G$ on $\mathcal Z$ is equivalent to a left (resp. right) $\mathcal E$-module structure on $C$. \\
The composition of algebraic correspondences is given by the fibre product over a groupoid. On the algebraic side this corresponds to the tensor product over a ring.
\begin{remark}
The main advantage of using the groupoid language comes from the fact that it provides a natural framework for constructing so called analytic correspondences $\mathcal Z ^{an}$ between $\mathcal E ' {}^{an}$ and $\mathcal E ^{an}$ out of algebraic correspondences. In fact, the procedure is functorial (see Thm. 4.34 \cite{ConMar})
\end{remark} 
\noindent
In our reference \cite{ConMar}, morphisms of the category of algebraic endomotives over $K$ are defined in terms of \'etale correspondences, where $\mathcal Z$ is \'etale if it is finite, and projective as a right module. We shall eventually see that the finiteness condition is too restrictive for our applications. Nevertheless, the functorial assignment $\mathcal Z \mapsto \mathcal Z ^{an}$ has a domain much larger than only \'etale correspondences, containing in particular the algebraic correspondences occurring in our applications. In summary, we enlarge tacitly the morphisms in the category of algebraic endomotives by allowing those contained in the domain of the assignment $\mathcal Z \mapsto \mathcal Z ^{an}$. 

\subsection{Analytic correspondences}
\noindent
As we have already seen in the section \ref{anflavour} about analytic endomotives, the (functorial) transition between algebraic and analytic endomotives is based on the functor $X \mapsto X(\overline K)$ taking $\overline K$-valued points. \\
Given an algebraic endomotive $\mathcal E$ with corresponding groupoid $\mathcal G$, we define the analytic endomotive $\mathcal G ^{an}$ to be the totally disconnected locally compact space $\mathcal G(\overline K)$ of $\overline K$-valued points of $\mathcal G$. An element of $\mathcal G ^{an}$ is therefore given by a pair $(\chi,s)$ with $s \in \widetilde S$ and $\chi$ a character of the (reduced) algebra $A_{F(s)}$, i.e. $\chi(F(s))=1$. The range and source maps \eqst{r,s : \mathcal G ^{an} \to \mathrm{Hom}_{K\text{-}\mathrm{alg}}(A,\overline K)} are given by \eqst{r(\chi,s) = \chi \text{ \ \ \ \ \ \ \ and \ \ \ \ \ \ \ } s(\chi,s) = \chi \circ s\,.}
One shows that $\mathcal E ^{an} = C(\mathrm{Hom}_{K\text{-}\mathrm{alg}}(A,\overline K))$ is isomorphic to the groupoid $C^*$-algebra $C^*(\mathcal G ^{an})$. \\ \\
Now, given an algebraic correspondence $\mathcal Z$ between $\mathcal E '$ and $\mathcal E$, i.e., we have (for the right action) a continuous map \eqst{g : \mathcal Z \to \mathrm{Spec}(A)\, ,} together with partial isomorphisms, we obtain, by taking the $\overline K$-valued points, a continuous map of totally disconnected locally compact spaces \eqst{g_K = g(\overline K) : \mathcal Z (\overline K) = \mathrm{Hom}_{K\text{-}\mathrm{alg}}(C,\overline K) \to \mathrm{Hom}_{K\text{-}\mathrm{alg}}(A,\overline K)\, ,} together with partial isomorphisms \eqst{z \in g_K^{-1}(\mathrm{Hom}_{K\text{-}\mathrm{alg}}(A_{F(s)} ,\overline K) ) \mapsto z \circ s \in g_K^{-1}(\mathrm{Hom}_{K\text{-}\mathrm{alg}}(A_{E(s)},\overline K ) ) \, ,} fulfilling again the obvious rules. \\
As in the algebraic case, this right action of $\mathcal G ^{an}$ on $\mathcal Z (\overline K)$ gives the space of continuous and compactly supported functions $C_c(\mathcal Z (\overline K))$ on $\mathcal Z (\overline K)$ the structure of a right $C_c(\mathcal G ^{an})$-module. Moreover, \textbf{if} the fibers of $g_K$ are discrete (and countable) there is a natural way of defining a $C_c(\mathcal G ^{an})$-valued inner product on $C_c(\mathcal Z (\overline K))$ by setting \eqst{\langle\xi,\eta\rangle(\chi,s) = \sum_{z \in g_K^{-1}(\chi)} \overline \xi (z) \eta(z \circ s)\,.}
In this case we obtain a right Hilbert-$C^*$-module $\mathcal Z ^{an}$ over $C^*(\mathcal G ^{an})$ by completion. Together with the left action $\mathcal Z ^{an}$ becomes a $C^*(\mathcal G ' {}^{an})$-$C^*(\mathcal G ^{an})$ Hilbert-$C^*$-bimodule.

\subsection{Examples}
\label{examples}
\noindent
1) Every algebraic endomotive $\mathcal E$ is a correspondence over itself. In particular the inner product is given on $\mathcal E ^{an} = C(\mathcal X) \rtimes S$ simply by \eqst{\langle\xi,\eta\rangle = \xi ^* \eta \ \ \ \ \forall \xi,\eta \in \mathcal E ^{an} \,.}
2) Let $S \subset T$ be an inclusion of abelian semigroups. Then the algebraic endomotive $K[T] = K\rtimes T$ is naturally a $K[T]$-$K[S]$ correspondence with the obvious continuous map $g: \mathrm{Spec}(\bigoplus_{t \in \widetilde T}K) \to \mathrm{Spec}(K)$ and partial isomorphisms. If we denote the corresponding analytic endomotives by $C^*(T)$ and $C^*(S)$, which are related by the natural conditional expectation $E : C^*(T)\to C^*(S)$ induced by $t \in T \mapsto \left\{ \begin{matrix} t & \text{ if $t\in S$} \\ 0 & \text{ otherwise} \end{matrix}\right.$, we see that the $C^*(S)$-valued inner product on $C^*(T)$ is given by \eqst{\langle\xi,\eta\rangle = E(\xi^*\eta),\ \ \ \ \forall \xi,\eta \in \mathcal C^*(T)\,.}

\section{On base-change}
\label{basechange}
\noindent
Let us start with the data defining our algebraic endomotive $\mathcal E _L$, namely the inductive system $(E_\mathfrak f)_{\mathfrak f \in \iol}$ and the collection of "Frobenius lifts" $\sigma_\mathfrak d$ (cf., \eqref{rhodef}), where the latter define of course the $\rho_\mathfrak d$ but are better suited for the functors $\mathfrak V _{L/K}^{alg}$ and $\mathfrak N _{M/L}^{alg}$ due to their level preserving property. Let us concentrate on the functor $\mathfrak V = \mathfrak V_{L/K}^{alg}$, the arguments for $\mathfrak N _{M/L}^{alg}$ are analogous. Define the $K$-algebras $\widetilde E _\mathfrak f = \mathfrak V (E_\mathfrak f)$, $\widetilde E _L = \varinjlim \widetilde E _\mathfrak f$ and the $K$-algebra homomorphisms $\widetilde \sigma _\mathfrak d = \mathfrak V(\sigma_\mathfrak d) : \widetilde E _L \to \widetilde E _L$. Due to the fact that (cf., \eqref{profunctor}) \eqst{\mathfrak H _L(\widetilde E _L) = DR_L} and \eqst{\mathfrak H _L(\widetilde \sigma _\mathfrak d) = \sigma_\mathfrak d : DR_L \to DR_L\, ,}
the same arguments as in section \ref{E_K} show the existence of projections $\widetilde \pi _\mathfrak d$ and endomorphisms $\widetilde \rho _\mathfrak d$ of $\widetilde E _L$ such that \eqst{\mathcal E _L ^K = \mathfrak V_{L/K}^{alg}(\mathcal E _L)=((\widetilde E _\mathfrak f),\widetilde \iol)} is in fact an algebraic endomotive over $K$. Analogously we construct \eqst{\mathcal E _L^M = \mathfrak N _{M/L}^{alg}(\mathcal E _L)} and obtain in summary the following base-change properties of our algebraic endomotives $\mathcal E _L$. 

\begin{proposition} With the notations from above we have that $\mathcal E _L ^K$ and $\mathcal E _L ^M$ are algebraic endomotives over $K$ and $M$, respectively. Moreover, on the analytic level we have \eqst{(\mathcal E _L^K)^{an} = \mathcal E _L ^{an} = (\mathcal E _L ^M)^{an}\,.}
\end{proposition}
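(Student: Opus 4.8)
The plan is to reduce everything to the observation that the base-change functors $\mathfrak V_{L/K}$ and $\mathfrak N_{M/L}$ are the identity on the underlying (pro-)finite sets — they only restrict, resp.\ pull back, the monoid action — and that the constructions of Sections~\ref{projsec}--\ref{E_K} use nothing about the field $L$ beyond the pro-finite monoid $\mathrm{DR}_L$ together with the multiplication-by-ideal maps $\varrho_\mathfrak d$ of \eqref{delta}.

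First I would check that $\mathcal E_L^K$ is an algebraic endomotive over $K$. Since $\mathfrak V=\mathfrak V_{L/K}$ is a functor and $\widetilde\sigma_\mathfrak d=\mathfrak V(\sigma_\mathfrak d)$, the $\widetilde\sigma_\mathfrak d$ are level-preserving $K$-algebra endomorphisms of the inductive system $(\widetilde E_\mathfrak f)$, which is an inductive system of \emph{finite \'etale} $K$-algebras because $\mathfrak V$ sends the finite $DR_\mathfrak f$ to a finite $\mathrm{DR}_K$-set, to which the correspondence \ref{ilggc} applies. Under $\mathfrak H_K$, and using the compatibility of $\mathcal V_{L/K}$ with the profinite structures proved in Proposition~\ref{fruitprop}, the system $(\widetilde E_\mathfrak f)$ corresponds to $(DR_\mathfrak f)$ with the $\mathrm{DR}_K$-action pulled back along $\mathcal V_{L/K}$; in particular $\mathfrak H_K(\widetilde E_L)$ is the pro-finite space $\mathrm{DR}_L$ and $\mathfrak H_K(\widetilde\sigma_\mathfrak d)$ is the multiplication-by-$\mathfrak d$ map $\varrho_\mathfrak d$ of \eqref{delta}. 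Now the entire content of Sections~\ref{projsec}--\ref{E_K} — the decomposition \eqref{decomposition}, the idempotents $\pi_\mathfrak d$, the endomorphisms $\rho_\mathfrak d$ of \eqref{rhodef}, the commutative diagram of Proposition~\ref{schoverview}, the relations of Lemma~\ref{easylemma}, and finally the verification in Proposition~\ref{defalgek} that $\rho_\mathfrak d$ induces an isomorphism onto $\pi_\mathfrak d E_L$ — depends only on $\mathrm{DR}_L$ and the maps $\varrho_\mathfrak d$. Rerunning these arguments with $\mathfrak H_K$ in place of $\mathfrak H_L$ produces idempotents $\widetilde\pi_\mathfrak d\in\widetilde E_L$ and $K$-algebra endomorphisms $\widetilde\rho_\mathfrak d$ with $\widetilde\rho_\mathfrak d\colon\widetilde E_L\overset\cong\longrightarrow\widetilde\pi_\mathfrak d\widetilde E_L$, which is exactly what is needed for $\mathcal E_L^K=((\widetilde E_\mathfrak f),\widetilde I_L)$ to be an algebraic endomotive over $K$. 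The same argument, with $\mathfrak N_{M/L}$, $\mathcal N_{M/L}\colon\mathrm{DR}_M\to\mathrm{DR}_L$ and $\mathfrak H_M$ in place of $\mathfrak V_{L/K}$, $\mathcal V_{L/K}$ and $\mathfrak H_K$, gives that $\mathcal E_L^M$ is an algebraic endomotive over $M$; here it is harmless that $\mathcal N_{M/L}$ is in general neither injective nor surjective, since $\mathfrak N_{M/L}$ is nonetheless a well-defined functor that leaves underlying finite sets and the maps $\varrho_\mathfrak d$ untouched.

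For the analytic statement I would use that, by construction (Section~\ref{anflavour}), the analytic endomotive of $((A_i),S)$ depends only on the pro-finite space $\mathcal X=\varprojlim\mathrm{Hom}_{K\text{-alg}}(A_i,\overline K)$ together with the induced partial-homeomorphism action of $S$ on $C(\mathcal X)$. Since $\mathfrak V_{L/K}$ and $\mathfrak N_{M/L}$ are the identity on underlying finite sets, we get canonical identifications $\mathrm{Hom}_{K\text{-alg}}(\widetilde E_\mathfrak f,\overline K)=DR_\mathfrak f=\mathrm{Hom}_{L\text{-alg}}(E_\mathfrak f,\overline L)$, respectively $\mathrm{Hom}_{M\text{-alg}}(E_\mathfrak f\otimes_L M,\overline M)=DR_\mathfrak f$, compatible with the transition maps; hence $\mathcal X$ is the same pro-finite space $\mathrm{DR}_L$ in all three cases. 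Moreover, by the previous paragraph $\widetilde\rho_\mathfrak d$ and $\rho_\mathfrak d$ induce the very same partial homeomorphism of $\mathrm{DR}_L$ (namely $\varrho_\mathfrak d^{-1}$ on $\mathrm{Im}(\varrho_\mathfrak d)$), so the actions of $\widetilde I_L=I_L$ coincide. Therefore $C(\mathrm{DR}_L)\rtimes\widetilde I_L=C(\mathrm{DR}_L)\rtimes I_L$, i.e.\ $(\mathcal E_L^K)^{an}=\mathcal E_L^{an}$, and likewise $(\mathcal E_L^M)^{an}=\mathcal E_L^{an}$.

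I do not expect a genuine obstacle here: the real work is already contained in the construction of the functors and in Section~\ref{E_K}. The one point that must be handled with care is that $\mathfrak V$ and $\mathfrak N$ (via $\mathfrak H_K^{-1}$, resp.\ $\mathfrak H_M^{-1}$) commute with the formation of the inductive limit and send the projection maps $\pi_{\mathfrak f,\mathfrak f'}$ to their analogues, so that the pro-finite space underlying $\mathcal E_L^K$ (resp.\ $\mathcal E_L^M$) really is $\mathrm{DR}_L$ with its profinite topology and not merely the abstract limit of the $DR_\mathfrak f$ with some other topology; this is precisely the compatibility with profinite structures recorded in Proposition~\ref{fruitprop}. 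Once that is invoked, both assertions follow formally.
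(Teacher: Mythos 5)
Your proposal is correct and follows essentially the same route as the paper, which (in the paragraph preceding the proposition) simply notes that $\mathfrak H_K(\widetilde E_L)=DR_L$ with the $\varrho_{\mathfrak d}$ acting, so that "the same arguments as in section \ref{E_K}" produce the projections $\widetilde\pi_{\mathfrak d}$ and endomorphisms $\widetilde\rho_{\mathfrak d}$, and the analytic endomotives agree because the underlying profinite space and the $I_L$-action are unchanged. Your write-up merely makes explicit the two points the paper leaves implicit (that the constructions of sections \ref{projsec}--\ref{E_K} depend only on $\mathrm{DR}_L$ and the multiplication maps, and that the compatibility with profinite structures is the one recorded in Proposition \ref{fruitprop}).
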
 

\begin{remark}
Both assignments are functorial. 
\end{remark}

\section{A functor, a pseudo functor and proof of Theorem \ref{ThmC}}

\subsection{Going down to $\q$}
\label{goingdowntoq}
The base-change mechanism from the last section enables us now to construct a functor from the category of number fields to the category of algebraic endomotives over $\q$ which sends a number field $K$ to $\mathcal E _K ^\q$. Unfortunately, it is \underline{not} possible to construct an algebra homomorphism between $\mathcal E _K^\q$ and $\mathcal E_L^\q$ because the actions of $\iok$ and $\iol$ are not compatible. Instead, given an extension $L/K$ we construct an algebraic $\mathcal E _L^\q$-$\mathcal E _K^\q$ correspondence $\mathcal Z_K^L$ as follows. Recall the examples \ref{examples}. From the first one, we see that we can regard $\mathcal E _K^\q$ as a $\q[\iok]$-$\mathcal E _k ^\q$ correspondence, because we have naturally the inclusion $\q[\iok] \subset \mathcal E _K^\q$. Using the second example in the case of the inclusion $I_K \subset I_L$ we obtain the $\q[\iol]$-$\q[\iok]$ correspondence $\q[\iol]$. In performing the fibre product over $\q[\iok]$ we obtain the $\q[\iol]$-$\mathcal E _K^\q$ correspondence $\mathcal Z _K^L = \q[\iol] \times_{\q[\iok]} \mathcal E _K^\q$ which can be described algebraically by \eqst{\label{algmorph}\mathcal Z _K ^L = \q[\iol] \otimes_{\q[\iok]} \mathcal E _K^\q\,.}
We want to show that there is a natural left action of $\mathcal E _L^\q$ making $\mathcal Z_K^L$ the desired $\mathcal E _L^\q$-$\mathcal E _K^\q$ correspondence. Namely, using the same arguments as in proposition \ref{fruitprop}, we obtain a $\q$-algebra homomorphism $\phi : \mathfrak V _{L/\q}^{alg}(E_L) \to \mathfrak V _{K/\q}(E_K)$ which is furthermore compatible with the $I_K$-actions on both algebras induced by functoriality from the actions of $\mathrm{DR}_K$ and $\mathcal V _{L/K}(\mathrm{DR}_K)$ on $\mathrm{DR}_K$ and $DR_L$, respectively. Thus, we see that \eqst{e U_s \cdot (U_t \otimes f) = U_{st} \otimes \phi(\widetilde e) f\, ,} for $s,t \in \iol$, $e \in \mathfrak V _{L/\q}^{alg}(E_L)$, $f \in \mathcal E_K^\q$ and $\widetilde e$ defined by the equation $eU_{st} = U_{st}\widetilde e \in \mathcal E _L^\q$, gives a well-defined left $\mathcal E _L^\q$-module structure on $\mathcal Z _K ^L$. \\ \\
We can now prove the main result of this chapter.
\begin{theorem}
1) The assignments $K \mapsto \mathcal E _K^\q$ and $L/K \mapsto \mathcal Z _K^L$ define a (contravariant) functor from the category of number fields to the category of algebraic endomotives over $\q$. \\
2) The corresponding functor given by $K \mapsto (\mathcal E _K^L)^{an}$ and $L/K \mapsto (\mathcal Z _K^L)^{an}$ from the category of number fields to the category of analytic endomotives is equivalent to the functor constructed by Laca, Neshveyev and Trifkovic in Thm. 4.4 \cite{LNT}. 
\end{theorem}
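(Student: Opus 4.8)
The plan is to verify the two claims separately, in each case reducing to the algebraic functoriality already packaged in Proposition~\ref{fruitprop} together with the transitivity of the Verlagerung and of the ideal base-change $\mathfrak f\mapsto\mathfrak f^{L}$. For part~1 I would first check that $\mathcal Z_K^L$ is a genuine algebraic correspondence in the sense of the section on algebraic correspondences: the right $\mathcal E_K^\q$-module structure is manifest, the left $\mathcal E_L^\q$-module structure is the one written just before the theorem (well-definedness of $eU_s\cdot(U_t\otimes f)=U_{st}\otimes\phi(\widetilde e)f$ is exactly the existence of the $\q$-algebra homomorphism $\phi\colon\mathfrak V_{L/\q}^{alg}(E_L)\to\mathfrak V_{K/\q}^{alg}(E_K)$ of Proposition~\ref{fruitprop} and its compatibility with the $\iok$-actions), and the fibres of the associated range map on $\overline\q$-points are discrete and countable (being indexed by $\iol$ modulo the $\iok$-relation), so that $(\mathcal Z_K^L)^{an}$ exists by the recipe of the section on analytic correspondences. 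The identity is immediate: $\mathcal Z_K^K=\q[\iok]\otimes_{\q[\iok]}\mathcal E_K^\q=\mathcal E_K^\q$, the identity correspondence (Example~1 in section~\ref{examples}). For composition in a tower $M/L/K$ I would use associativity of the tensor product together with the observation that, restricted to $\q[\iol]\subset\mathcal E_L^\q$, the left action on $\mathcal Z_K^L$ is just multiplication in the $\q[\iol]$-slot (because $\phi$ preserves the unit), so that
\[
\mathcal Z_L^M\otimes_{\mathcal E_L^\q}\mathcal Z_K^L\;\cong\;\q[I_M]\otimes_{\q[\iok]}\mathcal E_K^\q=\mathcal Z_K^M ,
\]
and then verify that this isomorphism intertwines the two left $\mathcal E_M^\q$-actions; the latter reduces to the cocycle identity $\phi_{M/K}=\phi_{L/K}\circ\phi_{M/L}$ under $\mathfrak V_{L/K}^{alg}\circ\mathfrak V_{M/L}^{alg}=\mathfrak V_{M/K}^{alg}$, which is precisely the transitivity of the Verlagerung together with $\mathfrak f^M=(\mathfrak f^L)^M$ (the remark following Proposition~\ref{fruitprop}). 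Naturality of these isomorphisms in the tower is routine bookkeeping.

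For part~2 I would note that on objects there is nothing to do: by the base-change proposition $(\mathcal E_K^\q)^{an}=\mathcal E_K^{an}$, and by Theorem~\ref{ThmA} this is $A_K$ via $\Psi$ (time evolutions, and in particular the normalization \eqref{normalizedtime}, being set aside as explained in the text). For morphisms I would unwind $(\mathcal Z_K^L)^{an}$ explicitly: passing to $\overline\q$-points realizes it as a completion of compactly supported functions on a totally disconnected space that fibres over the spectrum of $\mathcal E_K^\q$, i.e.\ over $\mathrm{DR}_K\cong Y_K$ (Proposition~\ref{keyprop}), with discrete countable fibres carrying the $\iol$-data, the right $A_K$-valued inner product and the left $A_L$-action being the groupoid formulas of the section on analytic correspondences. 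The remaining task is to identify this space, together with its inner product and both actions, with the Hilbert $C^*$-bimodule $Z^L_{K,\sigma}$ of Laca--Neshveyev--Trifkovic; the dictionary between our ideal-theoretic data and their idelic data is supplied by the functoriality diagrams \eqref{diagrams}, i.e.\ by the maps $\mathcal V_{L/K}$ and $\mathcal N_{L/K}$, and by Propositions~\ref{centralprop} and~\ref{keyprop}. Since $\mathcal Z\mapsto\mathcal Z^{an}$ is itself functorial (Thm.~4.34 of \cite{ConMar}), these object- and morphism-wise identifications will assemble, after a compatibility-with-composition check, into a natural equivalence of the two functors.

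The step I expect to be the main obstacle is the explicit matching in part~2: the bimodule $Z^L_{K,\sigma}$ of \cite{LNT} is built by hand with its own formulas for the inner product and the left action, whereas $(\mathcal Z_K^L)^{an}$ is produced by the general groupoid-to-Hilbert-module machinery, so the real work is the bookkeeping showing that the two agree on the nose --- in particular that the $A_K$-valued inner products coincide --- and that the resulting isomorphisms are natural enough to upgrade pointwise isomorphisms to an equivalence of functors. A secondary, already-flagged subtlety is that $\mathcal Z\mapsto\mathcal Z^{an}$ sees only the bimodule and not the normalized time evolution \eqref{normalizedtime} of \cite{LNT}, so the equivalence obtained is with the underlying correspondence-valued functor of \cite{LNT}, not with its refinement to $C^*$-dynamical systems.
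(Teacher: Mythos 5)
Your proposal is correct and follows the same route as the paper: part~1 reduces to the tensor-product collapse $\mathcal Z_L^M\otimes_{\mathcal E_L^\q}\mathcal Z_K^L\cong\q[I_M]\otimes_{\q[I_K]}\mathcal E_K^\q=\mathcal Z_K^M$, and part~2 to identifying $(\mathcal Z_K^L)^{an}$ with the inner tensor product $C^*(\iol)\otimes_{C^*(\iok)}\mathcal E_K^{an}$, which is precisely the correspondence of Theorem~4.4 of \cite{LNT}. The paper dispatches both steps as ``obvious''/``without difficulties,'' so your version simply supplies the details (well-definedness of the left action via $\phi$, transitivity of the Verlagerung, matching of inner products) that the paper leaves implicit, including the same caveat about the normalized time evolution.
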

\begin{proof}
1) One only has to show that $\mathcal Z _L ^M \otimes _{\mathcal E _L^\q} \mathcal Z _K^L \cong \mathcal Z _K^M$, which is obvious. \\
2) One can check without difficulties that $(\mathcal Z _K^L)^{an}$ is given as a Hilbert $C^*$-module by the inner tensor product of the right $C^*(\iok)$-module $C^*(\iol)$ and the right $\mathcal E_K ^{an}$-module $\mathcal E _K ^{an}$ with its natural left action of $C^*(\iok)$, i.e., \eqst{(\mathcal Z_K^L)^{an} = C^*(\iol) \otimes _{C^*(\iok)} \mathcal E _K^{an}\, ,} and this is exactly the same correspondence as constructed in Theorem 4.4 of \cite{LNT}. 
\end{proof}

\begin{remark}
We see that $\mathcal Z _K ^L$ is not an \'etale correspondence because the complement of $\iok$ in $\iol$ is infinite. Nevertheless, the definition of $\mathcal Z_K^L$ seems to be the most natural one under the circumstances that it is not possible to define interesting algebra homomorphisms between $\mathcal E _K ^{(\q)}$ and $\mathcal E_L ^{(\q)}$, which comes from the fact that Verlagerung and Restriction are not inverse to each other in general, and therefore the actions of $\iok$ and $\iol$ are not compatible. 
\end{remark}

\subsection{$\overline \q$ is too big}
\noindent
In analogy with the last section, where we constructed algebraic correspondences using the base-change induced by the functor $\mathfrak V _{L/K}^{alg}$, one can also use the functor $\mathfrak N _{L/K}^{alg}$ to construct bimodules of algebraic endomotives. \\ \\
Again, by proposition \ref{fruitprop}, we see that \eqst{\mathcal Y_K^L = L[I_K]\otimes_{L[I_L]} \mathcal E _L } is an $\mathcal E_K^L$-$\mathcal E_L$ correspondence. The right $L[I_L]$-module structure of $L[I_K]$ is induced by the norm map $\iol \to \iok$. \\
But this time, we do not obtain a functor. Of course, one can check that for a tower $M/L/K$ of number fields we have an isomorphism of $\mathcal E _K^M$-$\mathcal E _M$ bimodules \eqst{(M[I_K]\otimes_{M[I_L]}\mathcal E_L^M) \otimes_{\mathcal E _L^M} (M[I_L]\otimes_{M[I_M]}\mathcal E _M) \cong M[I_K]\otimes_{M[I_M]} \mathcal E _M\, ,} but in order to make this functorial for all number fields, we would have to make sense of a $\Lambda$-structure over $\overline \q$ which is compatible with $\Lambda$-structures over number fields and this does not seem likely to the author.

\subsection{On the time evolution}
\label{timeevolution}
\noindent
In this section we would like to make some remarks about the question of whether the normalized time evolution \eqref{normalizedtime} introduced in \cite{LNT} fits into the framework of endomotives. \\ \\
Due to the fact that the analytic endomotive of the base-changed algebraic endomotive $\mathcal E _K^\q$ is equal to $\mathcal E _K^{an}$ we see in particular that $\mathcal E _K ^\q$ is an uniform endomotive (over $\q$) with the same measure $\mu_K$ as the natural measure of $\mathcal E _K$.  So, in particular the base-changed endomotive $\mathcal E_ K^\q$ does not recover the normalized time evolution, if one tries to define the time evolution on $\mathcal E _K ^\q$ by means of normalized counting measures. This is clear, because the normalized norm $\widetilde{N} =N_{K /\q}^{1/[K:\q]}$ used in \cite{LNT} is no longer rational-valued on ideals of $K$, so $\widetilde{N}$ cannot arise from a counting procedure as one can for the usual norm $N _{K/ \q}$.  This shows that in order to extend the base-change $\mathcal E _K \mapsto \mathcal E _K ^\q$ in a way such that the normalized time evolution appears on $(\mathcal E _K^\q)^{an}$ one has to find a natural method of assigning to $\mu_K$ a measure $\mu_K^\q$ which recovers the normalized time evolution\footnote{The methods of \cite{LLN} show that such a measure should exist and is in fact determined by the normalized norm $\widetilde{N}$.}. We have argued that this cannot be done in the naive sense, but it would surely be interesting to find a natural method solving this problem.

\appendix

\section{Compatibility of symmetries with other constructions}
\noindent
We would like to clarify the relation between the different definitions of symmetries of Bost-Connes systems occurring in the literature.  \\
In \cite{LLN} or in the framework of endomotives, as in our work, symmetries are always given by automorphisms, on the other hand, e.g., in \cite{cmr} symmetries occur also in form of endomorphisms. \\
Apart from the two natural actions used to define the Bost-Connes system $\mathcal A _K$ in form of the action of $\iok = \okhat ^\natural / \okhat^\times$ on $Y_K = \okhat \times_{\okhat^\times}\kab$ by \eqst{s \cdot [\rho,\alpha] = [\rho s, [s]^{-1}\alpha]} and the action of $\kab$ on $Y_K$ given by \eqst{\gamma \cdot [\rho,\alpha] = [\rho,\gamma \alpha]\, ,} there is a third natural action of $\okhat ^\natural$ on $Y_K$ given by \eqst{s \star [\rho,\alpha] = [\rho s , \alpha]\,.}
In this way we get an action of $\kab$ as automorphisms on $C(Y_K)$ by \eqst{{}^{\gamma}\hspace{-0.6mm}f([\rho,\alpha]) = f([\rho,\gamma^{-1}\alpha])} and an action of $\okhat ^\natural$ on $C(Y_K)$ as endomorphisms by \eqst{{}^{s\star}\hspace{-0.6mm}f([\rho,\alpha]) = \left\{ \begin{array}{cc} f([\rho s^{-1},\alpha]), & \text{if } \rho s^{-1} \in \okhat \\ 0 & \text{otherwise} \end{array} \right. .}
The latter action is used for example in \cite{cmr} to define the symmetries of the corresponding Bost-Connes systems. The two notions of symmetries are related as follows. If we take $s \in \okhat ^\natural$, denote by $\gamma = [s] \in \kab$ its image under Artin's reciprocity map and by $\overline s \in \iok$ the associated integral ideal, we see that for every function $f \in C(Y_K)$ the following relation holds \eq{{}^{s\star}\hspace{-0.6mm}f(\overline s \cdot [\rho,\alpha]) = {}^{\gamma}\hspace{-0.6mm}f([\rho,\alpha])\,.}
This explains why both definitions of symmetries induce the same action on extremal $KMS_\beta$-states, for $\beta > 1$, and on extremal $KMS_\infty$-states evaluated on the arithmetic subalgebra. 
\begin{remark}
One does immediately see that the strict ray class group $Cl_K^+ = \kab / [\okhat^\times]$ of $K$ is responsible for the fact that $\okhat ^\natural$ acts by endomorphisms on $C(Y_K)$. If the strict ray class group of $K$ is trivial, then $\okhat^\natural$ acts by automorphisms as well and the actions of $\kab$ and $\okhat ^\natural$ agree, in fact.
\end{remark}

\section{On Euler's formula}
\label{appeuler} \noindent
In the following we show that the classical Euler totient function can be naturally generalized to arbitrary number fields. This is surely a well-known result.
\begin{lemma}
\label{eulertotient}
For $K$ a number field define the function $\varphi_K : \iok \to \mathbb N$ by setting \eqst{\varphi_K(\mathfrak f) = |(\mathcal O _K / \mathfrak f)^\times|\,.}
Then the following equality holds \eqst{N(\mathfrak f) = |\mathcal O _K / \mathfrak f| = \sum_{\mathfrak d | \mathfrak f}\varphi_K(\mathfrak d)\,.}
\end{lemma}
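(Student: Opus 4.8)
The plan is to prove the identity by reducing, via multiplicativity, to the case of a prime power, where it becomes a telescoping sum.

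First I would observe that the three functions $\mathfrak{f}\mapsto N(\mathfrak{f})=|\ok/\mathfrak{f}|$, $\mathfrak{f}\mapsto\varphi_K(\mathfrak{f})$ and $\mathfrak{f}\mapsto\sum_{\mathfrak{d}\mid\mathfrak{f}}\varphi_K(\mathfrak{d})$ are all multiplicative on $\iok$, i.e.\ they send products of coprime ideals to products of values. For $N$ this is the standard multiplicativity of the ideal norm. For $\varphi_K$ it follows from the Chinese Remainder Theorem, which for coprime $\mathfrak{f}_1,\mathfrak{f}_2$ gives a ring isomorphism $\ok/\mathfrak{f}_1\mathfrak{f}_2\cong\ok/\mathfrak{f}_1\times\ok/\mathfrak{f}_2$ and hence an isomorphism of unit groups. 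Finally, the divisor-sum of a multiplicative function is again multiplicative, since every divisor of $\mathfrak{f}_1\mathfrak{f}_2$ factors uniquely as a product of a divisor of $\mathfrak{f}_1$ and a divisor of $\mathfrak{f}_2$. As every nonzero integral ideal is a product of prime powers of pairwise coprime primes, it therefore suffices to treat $\mathfrak{f}=\pp^{k}$ with $\pp$ prime and $k\ge 1$.

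For $\mathfrak{f}=\pp^{k}$ the divisors are exactly $\pp^{0},\pp^{1},\dots,\pp^{k}$, so the right-hand side equals $\sum_{j=0}^{k}\varphi_K(\pp^{j})$. Here I use two structural facts about the local Artinian ring $\ok/\pp^{k}$, both immediate from $\ok$ being a Dedekind domain: (i) each successive quotient $\pp^{j}/\pp^{j+1}$ is a one-dimensional vector space over the residue field $\ok/\pp$, which has order $q:=N(\pp)$, so that $|\ok/\pp^{j}|=q^{j}$ for $0\le j\le k$; and (ii) $\ok/\pp^{j}$ is local with maximal ideal $\pp/\pp^{j}$, so its units are precisely the complement of the maximal ideal, giving $\varphi_K(\pp^{j})=|(\ok/\pp^{j})^{\times}|=q^{j}-q^{j-1}$ for $j\ge 1$, while $\varphi_K(\pp^{0})=1$. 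The right-hand side then telescopes:
\[
\sum_{j=0}^{k}\varphi_K(\pp^{j})=1+\sum_{j=1}^{k}\bigl(q^{j}-q^{j-1}\bigr)=q^{k}=N(\pp^{k}),
\]
which is the desired equality.

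There is no real obstacle: the only points demanding care are the two structural facts about $\ok/\pp^{k}$ and the bookkeeping in the reduction to prime powers. Alternatively one can avoid localization entirely by partitioning $\ok/\mathfrak{f}$ according to the ideal $(x)+\mathfrak{f}$ attached to any lift $x$ of a class, which is a well-defined divisor of $\mathfrak{f}$; the fibre over $\mathfrak{e}\mid\mathfrak{f}$ has cardinality $\varphi_K(\mathfrak{f}/\mathfrak{e})$, and summing over $\mathfrak{e}$ gives the formula. However, checking this fibre count cleanly still reduces, via the Chinese Remainder Theorem, to the same prime-power computation, so I would present the multiplicative reduction as the main line of argument.
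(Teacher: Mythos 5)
Your proof is correct and follows essentially the same route as the paper: reduce to the prime-power case via the Chinese Remainder Theorem, use that $\ok/\pp^k$ is local with maximal ideal $\pp/\pp^k$ to get $\varphi_K(\pp^j)=N(\pp^j)-N(\pp^{j-1})$, and telescope. You merely spell out the multiplicativity bookkeeping and the telescoping sum more explicitly than the paper does.
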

\begin{proof}
Thanks to the Chinese remainder theorem, it is enough to show $\varphi_K(\mathfrak p ^k) = N(\mathfrak p ^k)-N(\mathfrak p^{k-1})$ for all $k \geq 1$. Using the fact that $\ok / \mathfrak p ^k$ is a local ring with maximal ideal $\mathfrak p / \mathfrak p ^k$ we obtain $\varphi_K(\mathfrak p ^k)=  |\ok / \mathfrak p^k| - |\mathfrak p / \mathfrak p ^k| = N(\mathfrak p^{k}) - |\mathfrak p / \mathfrak p ^k|$. The isomorphism $(\ok/\mathfrak p ^k) / (\mathfrak p / \mathfrak p ^k) \cong \ok / \mathfrak p$ and the multiplicativity of the norm imply $|\mathfrak p / \mathfrak p ^k| = N(\mathfrak p ^{k-1})$ which finishes the proof.
\end{proof}

\bibliographystyle{plain}
\bibliography{endobck}

\end{document}